\documentclass[a4paper,10pt,reqno]{amsart}

\usepackage{amsmath, amsthm, amssymb}

\usepackage[utf8]{inputenc}
\usepackage{color}
\usepackage[pdftex]{graphicx} 
\usepackage{enumerate}
\usepackage[a4paper]{geometry}
\usepackage{comment}
\usepackage{booktabs}

\definecolor{alertmanzano}{rgb}{0.8,0,0.3}

\newcommand{\alertm}[1]{%
	\marginpar{%
		\ifodd\value{page} \raggedright \else \raggedleft \fi
		\footnotesize{\textcolor{alertmanzano}{#1}}
	}
}

\newtheorem{teo}{\bf Theorem}[section]
\newtheorem{prop}[teo]{\bf Proposition}
\newtheorem{lem}[teo]{\bf Lemma}

\newtheorem{cor}[teo]{\bf Corollary}

\theoremstyle{definition}

\newtheorem{defi}[teo]{\bf Definition}

\newtheorem{remark}[teo]{\bf Remark}

\def\r{\mathbb{R}}
\def\R{\mathbb{R}}

\def\h{\mathbb{H}}
\def\E{\mathbb{E}}
\def\SS{\mathbb{S}}
\def\hr{\mathbb{H}^2\times\mathbb{R}}

\def\la{\langle}
\def\ra{\rangle}

\def\p{\partial}

\def\h{\mathbb{H}}

\title[Minimal surfaces in $\mathbb{H}^2\times\mathbb{R}$ with   finite
  total curvature and embedded ends]{Minimal surfaces in $\mathbb{H}^2\times\mathbb{R}$
  with finite total curvature and embedded ends}

\date{}

\thanks{The first author is partially supported by MICINN-AEI, PID2025-174972NB-I00 and the second author is partially supported by the IMAG–Maria de Maeztu
  grant CEX2020-001105-M / AEI / 10.13039/501100011033 and MINECO/MICINN/FEDER grant no. PID2023-150727NB-I00.}

\author{Jesús Castro-Infantes}
\address{Departamento de Matemática Aplicada a las TIC, Universidad Politécnica de Madrid, Spain}
\email{jesus.castro@upm.es}

\author{Magdalena Rodr\'\i guez}
 \address{Departamento de Geometría y Topología and IMAG (Institute of
   Mathematics of Granada), Universidad de Granada, Spain}
 \email{magdarp@ugr.es}

\subjclass[2010]{Primary 53A10; Secondary 53C30}

\begin{document}

\maketitle

\begin{abstract}

In this paper we study the global geometry and classification of complete minimal surfaces with embedded ends and finite total curvature in the product space $\mathbb{H}^2 \times \mathbb{R}$. We describe the structure of embedded ends of such surfaces, constraining the combinatorics of their asymptotic polygons at infinity. Focusing on small total curvature values, we establish a complete classification of the possible asymptotic boundaries for embedded minimal surfaces with total curvature $-4\pi$ and $-6\pi$. In the $-4\pi$ case, we prove the surface is either a horizontal catenoid or simply-connected, with asymptotic boundary belonging to one of four explicit configurations. In the $-6\pi$ case, we show that any embedded example must be simply-connected and classify its possible ideal boundaries. We further construct, via an asymptotic Plateau problem solved by area-minimizing surfaces and successive Schwarz reflections, new 1- and 2-parameter families of (possibly non-embedded) simply-connected minimal surfaces with total curvature $-6\pi$ (and, more generally, $-2(2k-1)\pi$) whose ends are embedded and realize asymptotic configurations not previously known to occur.
\end{abstract}

\section{Introduction}
It is well known that there are no compact minimal surfaces in $\mathbb{R}^3$. The natural counterparts in the non-compact setting are those with finite total curvature. By a classic theorem of Huber~\cite{hu}, such surfaces are conformally equivalent to a compact Riemann surface punctured at a finite number of points, which correspond to the ends of the surface. We recall that the total curvature of a surface $M$ is defined by 
$$C(M)=\int_M K \, dA,$$ 
where $K$ denotes the Gauss curvature of $M$. Since the Gauss map of a minimal surface $M \subset \mathbb{R}^3$ is conformal, its total curvature coincides with the negative of the area of its image under the Gauss map. In particular, when $M$ is complete, its total curvature is either a non-positive multiple of $4\pi$ or minus infinity.

The only complete minimal surfaces in $\mathbb{R}^3$ with vanishing total curvature are planes. The next possible value of the total curvature is $-4\pi$. For this case, Osserman~\cite{oss} proved that the only examples are the catenoid and Enneper's surface; equivalently, these are the only complete minimal surfaces whose Gauss map is injective. On the other hand, Schoen~\cite{sch} proved that the embedded ends of a complete minimal surface in $\mathbb{R}^3$ with finite total curvature are either planar or catenoidal, subsequently characterizing the catenoid as the unique complete minimal surface with finite total curvature and exactly two embedded ends.

Since the turn of the century, the theory of minimal surfaces in the product space $\mathbb{H}^2\times\mathbb{R}$ has undergone extensive development. As in the Euclidean setting, there are no compact minimal surfaces, and the most well-understood examples are those with finite total (intrinsic) curvature. Huber's theorem remains valid in this context: minimal surfaces with finite total curvature are conformally equivalent to a compact Riemann surface minus finitely many points. However, the Gauss map is no longer conformal. Using the Hopf differential (the holomorphic quadratic differential associated with the harmonic height function), Hauswirth and Rosenberg~\cite{hr} proved that the total curvature of a complete minimal surface in $\mathbb{H}^2\times\mathbb{R}$ is a non-positive multiple of $2\pi$, establishing a Jorge-Meeks type formula in this setting.

Hauswirth, Menezes, and the second author~\cite{HMR} characterized complete minimal surfaces in $\mathbb{H}^2\times\mathbb{R}$ with finite total curvature as those with finite topology that are proper, and whose ends are asymptotic to geodesic polygons at the ideal boundary of $\mathbb{H}^2\times\mathbb{R}$ (see Definition~\ref{d:asymptotic} below). 

In Section~\ref{sec:emb}, we show that the asymptotic boundary of a properly embedded minimal surface of $\mathbb{H}^2\times\mathbb{R}$ with finite total curvature can be obtained as the limit of a sequence of embedded polygons at infinity (see Proposition~\ref{prop:limits-polygon} for the precise statement).

The simplest minimal surfaces with finite total curvature in $\mathbb{H}^2\times\mathbb{R}$ are the vertical planes (i.e., cylinders over geodesics of $\mathbb{H}^2$), which are totally geodesic and thus have vanishing total curvature. Hauswirth, Sa Earp, and Toubiana~\cite{hst} proved that these are indeed the only complete minimal surfaces with zero total curvature. The next possible value for the total curvature is $-2\pi$. For this value, Pyo and the second author~\cite{PR} proved an Osserman-type classification: the Scherk minimal graph over an ideal quadrilateral (a vertical graph over an ideal polygonal domain bounded by four ideal geodesics with alternating boundary data $\pm\infty$) is the unique complete minimal surface with total curvature $-2\pi$. 

In Sections~\ref{sec:-4pi} and~\ref{sec:-6pi}, we investigate minimal surfaces realizing the next possible values of total curvature, namely $-4\pi$ and $-6\pi$, and classify the possible asymptotic boundaries for embedded examples. We establish the following classification theorem for the $-4\pi$ case:

\begin{teo}\label{Th:1}
Let $M\subset \mathbb{H}^2\times \mathbb{R}$ be a complete embedded minimal surface with finite total curvature $-4\pi$. Then $M$ is a horizontal catenoid, or a simply-connected example whose asymptotic boundary is one of the polygons described in Theorem \ref{th:configuraciones} (see Figure~\ref{Fig-configuraciones}).
\end{teo}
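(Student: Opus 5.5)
The plan is to use the Gauss–Bonnet/Jorge–Meeks type formula of Hauswirth–Rosenberg together with the structure of ends from \cite{HMR}. If $M$ has genus $g$ and $k$ ends, and the $j$-th end is asymptotic to an ideal geodesic polygon with $2m_j$ vertices, then
\[
C(M)=2\pi\Bigl(2-2g-2k-\sum_{j=1}^{k}(m_j-1)\Bigr)
\]
(up to the normalization used in \cite{hr,HMR}, where each end contributes in terms of the zeros and poles of the Hopf differential). Equivalently, in terms of the degree of the Hopf differential, $-C(M)/2\pi = 2g-2+\sum_j (m_j+1)$. Setting $C(M)=-4\pi$ leaves only a handful of cases:
\begin{itemize}
\item $g=0$, $k=1$, $m_1=3$: one end asymptotic to an ideal hexagon-type polygon with $6$ vertices;
\item $g=0$, $k=2$, $m_1=m_2=1$: two ends, each asymptotic to a polygon with $2$ vertices;
\item $g=1$, $k=1$, $m_1=1$: the genus-one case, which I expect to exclude.
\end{itemize}
Here $m_j\geq 1$, and $m_j=1$ means the end is asymptotic to a pair of vertical geodesics joined by horizontal ideal arcs, i.e.\ a vertical-plane-type end. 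The first step is to fix these formulas precisely from the earlier sections and list the cases.

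In the two-ended case, embeddedness lets me apply the results of Section~\ref{sec:emb}. Each end is asymptotic to a vertical-plane-like polygon with two vertical sides at ideal points $p_j,q_j$, and the two ends cannot cross at infinity. I would then use an Alexandrov reflection argument with respect to horizontal slices $\mathbb{H}^2\times\{t\}$ and with respect to vertical planes orthogonal to the geodesic separating the ideal points. This shows that $M$ is symmetric with respect to a horizontal plane and to a vertical plane, and that the four ideal points are positioned as for a horizontal catenoid. The argument should close with a uniqueness result for horizontal catenoids: Morabito–Rodríguez/Pyo type characterizations, or a maximum-principle comparison within the one-parameter family of horizontal catenoids, which foliates the relevant region. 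The genus-one one-ended case should be excluded as follows: a single end with $m=1$ is asymptotic to a vertical plane, and a halfspace/maximum-principle argument comparing with vertical planes forces $M$ to be that plane, contradicting the curvature $-4\pi$.

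In the simply connected case, $g=0$, $k=1$ and the single end has $6$ vertices. The task is to determine which embedded polygons with these vertices can arise, which is exactly the content of Theorem~\ref{th:configuraciones}. I would invoke Proposition~\ref{prop:limits-polygon}, which realizes the asymptotic boundary as a limit of embedded polygons, and enumerate the combinatorial possibilities for the horizontal and vertical segments. Embeddedness and the height-function constraints then cut the list down to the four configurations.

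The main obstacle is the two-ended step, namely proving that the surface is \emph{exactly} a horizontal catenoid rather than merely having catenoid-type asymptotics. The Alexandrov reflection is delicate in $\mathbb{H}^2\times\mathbb{R}$ because the ends are non-compact and reach the ideal boundary, so I would first try to control $M$ near infinity using the exponential convergence to vertical planes established in \cite{HMR}.
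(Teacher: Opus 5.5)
Your case analysis from the curvature formula (your $m_j$ is the paper's $m_i+1$), your exclusion of genus one by the maximum principle with vertical planes, and your reduction of the one-ended case to Theorem~\ref{th:configuraciones} all match the paper. The gap is in the two-ended case, which you flag as the main obstacle but do not close. Alexandrov reflection in horizontal slices only shows that $M$ is a vertical bigraph symmetric about a horizontal plane (Remark~\ref{rem:scherk}, i.e.\ \cite[Theorem 6]{HMR}). The vertical-plane symmetries constrain $M$ but do not identify it. Two ultraparallel geodesics of $\mathbb{H}^2$ are always symmetric about two orthogonal geodesics, so ``the four ideal points are positioned as for a horizontal catenoid'' is automatic. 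A symmetric bigraph with these asymptotics must still be shown to be a catenoid.

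Two things remain to be proved. First, the horizontal catenoids only realize distances between the asymptotic vertical planes up to a critical value: the distance between opposite sides of the ideal Scherk quadrilateral. You must rule out larger distances, as well as distance zero. Second, for each admissible distance you must show $M$ coincides with the catenoid. Neither of your tools supplies this. The papers \cite{MoR,p} are existence results, not uniqueness characterizations. The claim that the catenoid family foliates a region suitable for a maximum-principle sweep is unsupported.

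The missing ingredient is the Schoen-type theorem of Hauswirth, Nelli, Sa Earp and Toubiana \cite{hnst}. It says that horizontal catenoids are the only complete minimal surfaces in $\hr$ with finite total curvature and two planar (embedded) ends. The paper closes the two-ended case with exactly this citation. It notes that planar ends are embedded by Remark~\ref{rem:zero}, and that the extra asymptotic hypothesis in \cite{hnst} is removed by \cite{HMR}; in your setting embeddedness of $M$ is anyway part of the hypothesis. Invoking that theorem completes your argument; without it, your sketch amounts to an unfinished reproof of \cite{hnst}.
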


In the simply-connected case, the only minimal surfaces realizing the asymptotic behavior of case \emph{(1)} are the Scherk graphs~\cite{NR,CR}   (see Section~\ref{sec:pre}) over an ideal geodesic hexagon. This behavior is completely understood, and these solutions form a $2$-parameter family. 

Furthermore, the second author and Pyo~\cite{PR} constructed a $1$-parameter family of surfaces, termed twisted-Scherk examples (see Section~\ref{sec:pre}), whose asymptotic boundaries correspond to cases \emph{(2)} and \emph{(3)}. Whether this family uniquely exhibits these asymptotic behaviors remains an intriguing open question;  a positive answer would complete the classification of minimal surfaces with total curvature $-4\pi$.

For the total curvature value $-6\pi$, we obtain the following classification for the possible asymptotic boundaries:

\begin{teo}\label{Th:2}
Let $M\subset \mathbb{H}^2\times \mathbb{R}$ be a complete embedded minimal surface with finite total curvature $-6\pi$. Then $M$ is simply-connected and its asymptotic boundary is one of the polygons described in Theorem~\ref{th:configuraciones-2} (see Figure~\ref{Fig-configuraciones-2}).
\end{teo}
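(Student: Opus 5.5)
The plan is to combine the Gauss--Bonnet-type formula of Hauswirth--Rosenberg with the structure of embedded ends from Section~\ref{sec:emb}. For a complete minimal surface of finite total curvature in $\mathbb{H}^2\times\mathbb{R}$, of genus $g$ with $k$ ends, where the $j$-th end is asymptotic to an ideal geodesic polygon with $2m_j$ vertical/horizontal sides, the formula reads
\[
C(M)=2\pi\Bigl(2-2g-2k-\sum_{j=1}^k (m_j-1)\Bigr).
\]
Equivalently, $C(M)=2\pi(\chi(M)-\sum_j m_j)$ with $\chi(M)=2-2g-k$. Since every end has $m_j\ge 1$, setting $C(M)=-6\pi$ gives $3=2g-2+\sum_j(m_j+1)$, so $2g+\sum_j(m_j+1)=5$. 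We also use that a complete minimal surface with finite total curvature that is not a vertical plane has no end with $m_j=1$. For embedded ends this should follow from the structure results of Section~\ref{sec:emb}: an end with $m_j=1$ is asymptotic to a vertical plane, and a maximum principle/halfspace argument with vertical planes then forces $M$ to be that plane. Hence $m_j\ge 2$ and each end contributes at least $3$. It follows that $g=0$, $k=1$ and $m_1=4$. In particular $M$ is simply-connected, with one end asymptotic to an ideal polygon with $8$ sides.

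For the second step we note that $k=1$ and $g=0$ exclude the annular case, so no catenoid-type example exists. This contrasts with the $-4\pi$ case, where $g=0$, $k=2$, $m_1=m_2=2$ is possible and gives the horizontal catenoids. The remaining task is to identify which embedded ideal octagons (four vertical segments alternating with four horizontal geodesic arcs at heights $\pm\infty$, possibly with repeated or nested vertices) can occur. Here we invoke Proposition~\ref{prop:limits-polygon}: the asymptotic boundary is a limit of embedded polygons at infinity. We combine this with the combinatorial constraints on embedded ends from Section~\ref{sec:emb}, namely the admissible orderings of the vertices on $\partial_\infty\mathbb{H}^2$ and the alternation of the top and bottom arcs. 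Enumerating the cyclic arrangements of the $8$ vertices modulo symmetries, and discarding those that force self-intersection or contradict the maximum principle (comparison with vertical planes or Scherk graphs over subpolygons), should produce exactly the list in Theorem~\ref{th:configuraciones-2}. I expect this to parallel the proof of Theorem~\ref{th:configuraciones}, which handles the $-4\pi$ case.

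The main obstacle is this combinatorial enumeration, in particular ruling out configurations in which horizontal arcs at $+\infty$ and $-\infty$ cross in projection or vertices coincide. To handle these, we would first project the octagon to $\partial_\infty\mathbb{H}^2$ and classify the arrangements by their crossing pattern. For each excluded pattern, we would then exhibit a vertical plane, or a family of them, that touches $M$ from one side at an interior point, contradicting the maximum principle, or that yields a non-embedded limit polygon, contradicting Proposition~\ref{prop:limits-polygon}.
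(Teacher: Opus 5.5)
\textbf{There is a genuine gap in your first (topological) step.}

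Your claim that a complete finite-total-curvature minimal surface other than a vertical plane has no end with $m_j=1$ is false. In your convention ($m_j$ is the number of horizontal geodesics of the end at $+\infty$), every end of a horizontal catenoid or of a $k$-noid has $m_j=1$. Your aside that the catenoids correspond to $m_1=m_2=2$ is inconsistent with your own formula, which gives $-8\pi$ for that choice. The halfspace argument with vertical planes only works when the planar end is the \emph{only} end, as in Proposition~\ref{prop:genus}.

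Moreover, even granting $m_j\geq 2$, the relation $2g+\sum_j(m_j+1)=5$ still admits $g=1$, $k=1$, $m_1=2$. So ``it follows that $g=0$'' is also an arithmetic slip. Two configurations are therefore left open:
\begin{itemize}
\item[(a)] genus one with a single end having two horizontal geodesics at each of $\pm\infty$;
\item[(b)] genus zero with one planar end and one end having two horizontal geodesics at each of $\pm\infty$.
\end{itemize}
Your topological step never uses embeddedness, and both cases can only be excluded with it. The paper does this as follows. By Corollary~\ref{cor2}, an embedded end with four horizontal geodesics is Scherk-type; this comes from Lemma~\ref{l1} applied to the embedded generic approximations of Proposition~\ref{prop:limits-polygon}.
\begin{itemize}
\item[(a)] The Alexandrov reflection argument of Remark~\ref{rem:scherk} can be started because the vertical sheets are horizontal graphs by item (5) of Theorem~\ref{th:ctf}. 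It forces $M$ to be a vertical graph, hence a disk, contradicting $g=1$.
\item[(b)] The two embedded ends cannot cross, so Proposition~\ref{th:Alexandrov1} applies. $M$ would have to be a vertical graph, which is impossible with a planar end, or a vertical bigraph, which is impossible with a Scherk-type end.
\end{itemize}

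\textbf{Your classification step is in the same spirit as the paper's.} Both use embedded generic approximations plus Lemma~\ref{l1}, then pass to limits by letting vertices merge. However, you need to be precise about what must be excluded. Crossings in projection between an arc at $+\infty$ and an arc at $-\infty$ are allowed, as are coincident vertices: they produce cases (2)--(4) and the non-generic subcases. What Lemma~\ref{l1} forbids is a transverse crossing between two arcs of the same parity. It is proved by observing that the two corresponding vertical sheets are horizontal graphs over transverse vertical planes and hence intersect; it is not a one-sided touching argument with a vertical plane.

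The paper then sorts the generic embedded polygons by the number of different-parity crossings: none, one, or two. Two crossings occur either as two disjoint crossing pairs or as one arc crossing two others. Note also that the polygon has eight vertical lines and eight horizontal geodesics (four at each of $\pm\infty$), not four and four.
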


As before, case \emph{(1)} yields Scherk graphs (see Section~\ref{sec:pre}) over an ideal geodesic octagon, where a \mbox{$4$-parameter} family of surfaces uniquely solves the Dirichlet problem for this boundary configuration. 

Although Theorems~\ref{Th:1} and~\ref{Th:2} are stated for complete embedded minimal surfaces, similar results hold for embedded annular minimal ends of finite total curvature. In case \emph{(3)}, we construct a $1$-parameter family of (possibly non-embedded) surfaces, each of which is simply connected, with an embedded end exhibiting this asymptotic behavior (see Proposition~\ref{prop: Ejemplo 1}). In case \emph{(2)}, we obtain a $2$-parameter family of such surfaces (see Proposition~\ref{prop: Ejemplo 2}). Additionally, we expect that examples corresponding to case \emph{(4)} arise as the conjugate surfaces of our constructed examples for case \emph{(2)}.

The paper is organized as follows. In Section~\ref{sec:pre}, we provide a detailed description of minimal surfaces with finite total curvature and collect key preliminary results. In Section~\ref{sec:emb}, we describe embedded ends with finite total curvature and apply this framework in Section~\ref{sec:low-curvature}  to classify the possible asymptotic boundaries of embedded minimal surfaces with total curvature $-4\pi$ and $-6\pi$. Finally, in Section~\ref{sec:examples}, we construct new families of minimal surfaces with finite total curvature.

\section{Preliminaries}\label{sec:pre}

We consider the cylinder model for $\h^2\times\r=\{(x,y,t): x^2+y^2<1\}$ with the Riemannian metric $g=\frac{4}{(1-x^2-y^2)^2}(dx^2+dy^2)+dt^2$, eventually we will consider a complex parameter $z=x+iy$. We denote by $\pi:\h^2\times\R\to\h^2$ the projection on $\h^2$, that reads as $\pi(x,y,t)=(x,y)$ or $\pi(z,t)=z$.

We consider $\overline{ \h^2\times\r}$, the product compactification for $\h^2\times\R$, denoting by $\R\cup\{\pm\infty\}$ the compactification of $\R$. Then the asymptotic boundary $\partial_\infty(\h^2\times\R)$ of $\h^2\times\R$ consists of the union of $\partial_\infty\h^2\times\R$ (identified in this model with $\mathbb S^1\times\R$) and the horizontal slices $\h^2\times\{\pm \infty\}$. 

We will say that a point $p\in\partial_\infty(\h^2\times\R)$ belongs to the asymptotic boundary $\partial_\infty M$ of a surface $M$, if there exists a divergent sequence of points in $M$ that converges to $p$ in the product compactification. We will use this notion of asymptotic boundary throughout the paper. 

We are going to study the behavior of minimal surfaces in $\h^2\times\R$ with finite total curvature, advancing in the classification of those with small total curvature. We say that a surface $M$ has finite total curvature if its Gauss curvature is integrable as a function in $M$. In that case the total curvature of $M$ is defined as $\int_M K$.

We start by recalling some previously known results. We refer to~\cite{hr,hnst,HMR} for more details. We will deal with the ends of a minimal surface in $\h^2\times \r$ with finite total curvature, which are asymptotic to an admissible polygon at infinity (see Theorem~\ref{th:ctf}). Let us first explain what this means.

\begin{defi}\label{d:poligono}
We call \emph{polygon at infinity} any connected,
closed curve in $\partial_\infty(\h^2\times\r)$ formed by a finite number of geodesics. 
We say that a polygon at infinity is \emph{admissible} if it is composed of the same number of complete geodesics in $\h^2\times\{+\infty\}$ as in $\h^2\times\{-\infty\}$, none of them consecutive, together with the vertical lines in $\partial_\infty\h^2\times \r$ joining their endpoints.
We say that an admissible polygon $\mathcal{P}$ is embedded if there exists a one-to-one correspondence from $\SS^1$ to $\mathcal{P}$.
\end{defi}

We observe that we allow repetitions in the sense that when we go along the polygon at infinity we can pass twice (or more times) through the same geodesic; we count the geodesic with multiplicity. Even we could go along the polygon at infinity several times.

We deduce from Huber's Theorem~\cite{hu} that a minimal surface $M\subset\hr$ with finite total curvature has finite topology; in particular, its ends are annular. 

\begin{defi}\label{d:asymptotic}
 Let $\E$ be an annulus and $X:\E\to \h^2\times\r$ be a proper immersion such that $E=X(\E)$ is an end of a minimal surface $M\subset\hr$ with $\partial_\infty E=\mathcal P$ an admissible polygon at infinity. 
 \begin{itemize}
     \item We say that $E_i$ is a \emph{vertical sheet} of $E$ for $t_0>0$ if $E_i$ intersects any horizontal plane above $\{t=t_0\}$ (resp. below $\{t=-t_0\}$) and there exists a connected component $U_i$ of $X^{-1}(E\cap\{t>t_0\})$ (resp. $X^{-1}(E\cap\{t<-t_0\})$) such that $E_i=X(U_i)$. We will abbreviate by saying that $E_i$ is a vertical sheet of $E$ when it is a vertical sheet for some sufficiently large $t_0$.
     \item We say that $E_i$ is a \emph{horizontal sheet} of $E$ for a horocylinder $\mathcal H$ if it intersects any horocylinder contained in $\mathcal H$ and there exists a connected component $U_i$ of $X^{-1}(E\cap\mathcal H)$ such that $E_i=X(U_i)$. We will abbreviate by saying that $E_i$ is a horizontal sheet of $E$ when it is a horizontal sheet for some horocylinder.
     \item We say that the end $E$ is \emph{asymptotic to} $\mathcal{P}$ (recall $\mathcal P=\partial_\infty E$) if there exists $t_0>0$ large enough such that, for any vertical sheet $E_i$ of $E$ for $t_0$, there exists a complete geodesic $\alpha_i\subset \pi(\mathcal P)$ so that $\partial_\infty E_i\subset \partial_\infty(\alpha_i\times\R)$.
     \item We say that $E$ is \emph{asymptotic with multiplicity one} to $\mathcal P$ if for any horizontal (resp. vertical) ideal geodesic  $\Gamma\subset\mathcal P$ there is a unique vertical (resp. horizontal) sheet of $E$ for some large $t_0>0$ (resp. for some horocylinder $\mathcal{H}$) containing $\Gamma$ in its asymptotic boundary.     
 \end{itemize}
\end{defi}
In the above definition, if $\mathcal P$ has horizontal intersecting geodesics and $E$ has vertical sheets forming a corner when approaching the intersection points, then $E$ is not asymptotic to $\mathcal P$. 

We observe that if the end is embedded then all the vertical sheets (resp. horizontal sheets) are disjoint.
In this work we will usually deal with properly embedded minimal surfaces with finite total curvature, so each end of the surface will be asymptotic with multiplicity one to an admissible polygon at infinity ${\mathcal P}$ without transverse self-intersections. In this case, the end $E\subset M$ being asymptotic to ${\mathcal P}$ reduces to the condition $\partial_\infty E={\mathcal P}$.

The following theorem describes minimal surfaces with finite total
curvature.

\begin{teo}\cite{hr,hnst, HMR}\label{th:ctf}
  Let $M$ be a complete, minimal surface immersed in
  $\h^2\times \r$ with finite total curvature. Then:
  \begin{enumerate}
  \item $M$ is conformally a closed Riemann surface $\mathbb{M}$
    punctured in a finite number of points $p_1,\dots, p_r$ corresponding to the 
    ends of $M$.
  \item $M$ is proper.
  \item The angle function $\nu=\la N, \frac{\p}{\p t}\ra$ converges
    uniformly to zero on each end $p_i$.
  \item Any end $p_i$ of $M$ is asymptotic to an
    admissible polygon at infinity composed of $m_i+1$ geodesics in
    $\h^2\times\{+\infty\}$ and $m_i+1$ geodesics in
    $\h^2\times\{-\infty\}$ (counting with multiplicity), joined by $2(m_i+1)$ vertical straight lines in
    $\partial_\infty \h^2\times\R$, for an integer $m_i\geq 0$. 
    \item\label{item} Any vertical or horizontal sheet of $E$ can be written outside a compact set as a horizontal graph over a vertical plane $\alpha\times\R$, being $\alpha$ the projection of a horizontal geodesic in $\mathcal P$, converging to zero asymptotically. When we say that it is a horizontal graph we refer to both possible meanings: a killing graph generated by horizontal translations and in geodesic directions orthogonal to the plane.
  \item The total curvature of $M$ is given by
    \begin{equation}\label{eq}
      \int_M K=2\pi\left(2 -2 g-2r-\sum\limits_{i=1}^{r}m_i
      \right) ,
    \end{equation}
    where $g$ is the genus of $\mathbb M$, $r$ the numbers of ends
    of $M$, and $m_i\geq 0$ is the number associated to each end $p_i$ (explained above).
  \end{enumerate}
\end{teo}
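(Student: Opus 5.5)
Theorem~\ref{th:ctf} collects results of~\cite{hr,hnst,HMR}, so the plan is to reassemble their arguments around the holomorphic Hopf differential rather than prove anything new. Part (1) is Huber's theorem~\cite{hu}: $M$ is conformally a compact Riemann surface $\mathbb M$ with finitely many punctures $p_1,\dots,p_r$. On $M$ I take a conformal parameter $z$, the harmonic height function $h$ (the restriction of $t$), and the holomorphic quadratic differential $Q=(h_z)^2\,dz^2$. I write the angle function as $\nu=\tanh\omega$. Then the metric is
\[
ds^2=\frac{4}{1-\nu^2}\,|Q|=4\cosh^2\omega\,|Q|,
\]
and the Gauss equation becomes a sinh-Gordon type equation $\Delta_{|Q|}\omega=\sinh\omega\cosh\omega$ away from the zeros of $Q$. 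In particular the curvature $K\,dA$ is expressed through $\omega$.

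The first step is part (3), uniform decay of $\nu$ on each end, which I expect to be the main obstacle. I would argue as follows. Finite total curvature together with standard curvature estimates for minimal surfaces in $\hr$ gives $|A|\to 0$ at each puncture. If $\nu$ did not tend to $0$ along a divergent sequence, I would translate the surface back along that sequence using isometries of $\hr$. A subsequence then converges to a complete, totally geodesic, non-vertical limit surface, namely a horizontal slice. This contradicts either the integrability of $K$ (a positive-area piece of curvature close to $0$ but of nearly horizontal tangent planes forces $h$ to be almost constant on large discs) or the structure of $Q$. Once $\omega\to 0$, the sinh-Gordon equation and a maximum principle argument improve this to exponential decay of $\omega$ in the flat $|Q|$-metric. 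From this, completeness forces $Q$ to extend meromorphically to $\mathbb M$ with a pole of order $m_i+2\ge 2$ at each $p_i$, with $m_i\geq 0$.

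Next come parts (4) and (5) and properness (2). Around $p_i$ I use the natural coordinate $w=\int\sqrt{Q}$. A punctured neighbourhood of the end then splits into $2(m_i+1)$ half-planes on which $h=\mathrm{Re}\,w$ up to a constant. Because $\omega$ decays exponentially, on each such half-plane the horizontal projection $\pi\circ X$ is an almost-isometric harmonic map with Hopf differential $-Q$. Comparing it with the model Scherk-type map sends horizontal lines $\mathrm{Im}\,w=c$ to curves converging to a fixed geodesic, and each sheet is a horizontal graph over the corresponding vertical plane $\alpha\times\R$ that converges to zero. Gluing the sheets in cyclic order gives $m_i+1$ geodesics at $t=+\infty$ and $m_i+1$ at $t=-\infty$, alternating and joined by vertical segments, so the polygon is admissible. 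Properness follows because each end leaves every compact set.

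Finally, for part (6) I apply Gauss--Bonnet on $\mathbb M$ minus small discs around the punctures. The geodesic-curvature boundary terms are computed in the asymptotically flat $|Q|$-metric: each end contributes $-2\pi(m_i+2)+2\pi$ in the limit, while the exponentially small error coming from $\omega$ vanishes. Equivalently, I count zeros and poles of $Q$ via $\deg Q=4g-4$. Either way this yields $\int_M K=2\pi\bigl(2-2g-2r-\sum_i m_i\bigr)$.
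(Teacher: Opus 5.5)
The paper does not prove Theorem~\ref{th:ctf}. It quotes it from \cite{hr,hnst,HMR}, and your outline (Huber, $Q=(h_z)^2dz^2$, $\nu=\tanh\omega$, sinh-Gordon decay, natural coordinate, Gauss--Bonnet) is the Hopf-differential route of those papers. Your step (3) is fine. Most directly, $K=-\nu^2-\tfrac12|A|^2$ gives $\int_M\nu^2<\infty$, and $|\nabla\nu|\le|A|$ is bounded.

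The genuine error is the pole order. You claim $Q$ has a pole of order $m_i+2$ at $p_i$. Then $\sqrt Q\sim z^{-(m_i+2)/2}dz$ is not even single-valued for odd $m_i$. Also $w=\int\sqrt Q$ yields $m_i$ half-planes instead of $2(m_i+1)$, and for $m_i=0$ it yields a flat half-cylinder. For a vertical plane, $Q=\tfrac14dz^2$ on $\mathbb C$ has a pole of order $4$ at $\infty$, so your convention would assign it $m=2$.

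What is true is this. Since $h$ is a global function, $\partial h=h_z\,dz$ is a single-valued meromorphic $1$-form on the punctured disc, with a pole of order $m_i+2$. Hence $Q=(\partial h)^2$ has a pole of even order $2m_i+4$, and only then does $w\sim z^{-(m_i+1)}$ produce $2(m_i+1)$ half-planes. Your boundary term $-2\pi(m_i+2)+2\pi$ is exactly the one for the $1$-form $\partial h$, so you are conflating $Q$ with $\sqrt Q$. With your stated orders, the count $\deg Q=4g-4$ would give $2\pi\bigl(2-2g-r-\tfrac12\sum_i m_i\bigr)$, not \eqref{eq}. That count also relies on $\int_MK=-\pi\cdot\#\{\text{zeros of }Q\}$, which you need and should state.

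Moreover, your sentence ``completeness forces $Q$ to extend meromorphically with a pole of order $\ge 2$'' hides two missing arguments.

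First, to exclude an essential singularity you need that $Q$ has no zeros near $p_i$: a zero of $Q$ is a point where $|\nu|=1$, which (3) excludes. You then need an Osserman-type lemma. A nonvanishing holomorphic $f$ with an essential singularity at $0$ admits a path to $0$ of finite $|f|\,|dz|$-length. This contradicts completeness of $4\cosh^2\omega\,|Q|$ with $\omega$ bounded.

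Second, completeness only forces order $\ge2$ for $Q$. Order exactly $2$ means $\partial h\sim c\,dz/z$ with $c\in\R$ and $h\sim 2c\log|z|$. This is a complete flat cylinder and must be excluded geometrically. For instance, the level curves of $h$ would then be closed curves in horizontal slices with curvature $\le|A|/\sqrt{1-\nu^2}\to0$. That is impossible in $\h^2$, since at its point farthest from a fixed point a closed curve has curvature $\ge\coth R>1$.

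Finally, items (4)--(5) are asserted rather than derived. Consecutive geodesics must share an ideal endpoint, so that the polygon closes up and is admissible. This comes from the bounded-height half-strips between consecutive half-planes (the horizontal sheets), which your sketch never analyses. The Killing-graph property also requires $C^1$ control up to the ideal vertices. This is the substantive content of \cite{hnst,HMR}.

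Note also a slip in your description of the half-planes. There the level curves of $h=\mathrm{Re}\,w$ are the lines $\{\mathrm{Re}\,w=c\}$, and it is these that converge to the geodesic. The lines $\{\mathrm{Im}\,w=c\}$ project to curves converging to single points of it.
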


We also have the following characterization for minimal surfaces with
finite total curvature:

\begin{teo}\cite{HMR}\label{th:caracterizacion-ctf}
  A complete minimal surface in $\h^2\times\R$ has finite total
  curvature if and only if it is proper, it has finite topology and
  each of its ends is asymptotic to an admissible polygon at infinity.
\end{teo}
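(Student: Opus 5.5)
The forward implication is already contained in Theorem~\ref{th:ctf}. If $M$ has finite total curvature, item (1) gives finite topology, item (2) gives properness, and item (4) says that every end is asymptotic to an admissible polygon at infinity. So the real content is the converse. I would prove it end by end.

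\textbf{Reduction to one end.} Since $M$ has finite topology, it is a compact surface with boundary $M_0$ together with finitely many annular ends $E_1,\dots,E_r$. The Gauss curvature of a minimal surface in $\hr$ satisfies $K\le 0$: by Gauss' equation it equals the negative of the squared principal curvature plus a sectional curvature term of $\hr$ lying in $[-1,0]$. Hence $\int_M K$ is a monotone limit over any exhaustion. It therefore suffices to show that $\int_{E_i}|K|<\infty$ for each $i$. Fix an end $E$, asymptotic to an admissible polygon $\mathcal P$ with $2(m+1)$ horizontal geodesics.

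\textbf{Step 1: $C^2$ control of the sheets.} Take $t_0$ large and a family of horocylinders $\mathcal H$ near the vertical ideal segments of $\mathcal P$. I would show that outside a compact set each vertical sheet (over a geodesic $\alpha_j\times\R$) and each horizontal sheet is a horizontal graph over a vertical plane, converging to the plane in the $C^2$ topology. The argument:
\begin{itemize}
\item Properness and $\partial_\infty E_j\subset\partial_\infty(\alpha_j\times\R)$, combined with barriers (vertical planes and Scherk-type graphs used as comparison surfaces through the maximum principle), force each sheet into a thin neighbourhood of $\alpha_j\times\R$.
\item Each sheet is then stable, since it is a Killing graph. Schoen-type curvature estimates for stable minimal surfaces in $\hr$, together with homogeneity under vertical translations and hyperbolic isometries fixing an ideal point, upgrade the $C^0$ closeness to $C^2$ closeness.
\end{itemize}
In particular $|A|\to 0$ on the end, and the graph functions decay.

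\textbf{Step 2: Gauss--Bonnet on truncations.} Let $E(n)\subset E$ be the compact annulus cut off by the slabs $|t|\le n$ and by horocylinders shrinking to the vertices of $\mathcal P$. Gauss--Bonnet gives
$$\int_{E(n)}K=-\int_{\partial E(n)}k_g-\sum\text{(exterior angles)},$$
using $\chi(E(n))=0$. The boundary $\partial E(n)$ has an inner component, which is fixed, and an outer component, which is a closed curve made of arcs on horizontal planes $t=\pm n$ and arcs on horocylinders. By Step 1, each such arc is a $C^2$-small perturbation of the corresponding curve on the limit vertical plane: a horizontal geodesic segment, or a horocyclic-vertical segment. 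Its geodesic curvature integral therefore converges to the model value. The turning angles at the $4(m+1)$ corners tend to $\pi/2$. Hence $\int_{E(n)}K$ is bounded independently of $n$, which gives finiteness. As a consistency check, the limiting value should reproduce the contribution $-2\pi m$ in formula~\eqref{eq}.

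\textbf{Main obstacle.} I expect the hardest step to be Step 1 near the vertices of $\mathcal P$, where a vertical sheet meets a horizontal sheet. There one must pass from the vertical-sheet description to the horizontal-sheet description while keeping uniform $C^2$ control, and also rule out a sheet spiralling or oscillating between the two regimes. My first attempt would be to place, near each vertex, a Scherk-type graph over an ideal quadrilateral as a barrier, to trap the surface in the corner region. I would then apply the stability curvature estimate uniformly, normalising by the parabolic isometries fixing the ideal vertex, so that no curvature concentrates there.
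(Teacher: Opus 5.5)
The paper gives no proof of this statement; it is quoted from \cite{HMR}. Your sketch therefore has to stand on its own. The forward implication (items (1), (2), (4) of Theorem~\ref{th:ctf}) and the reduction to a single end via $K\le 0$ are fine. The converse, however, has genuine gaps in both steps.

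\textbf{Step 1.} This step is essentially item~(5) of Theorem~\ref{th:ctf}. There it is a \emph{consequence} of finite total curvature, so here it must be proved from scratch, and your argument for it is circular. You get stability of a sheet from its being a Killing graph, but graphicality is exactly what Step~1 is supposed to establish. Lying in a thin neighbourhood of $\alpha_j\times\R$ implies neither graphicality nor stability. Since the theorem concerns \emph{immersed} surfaces, $C^0$ proximity to a plane gives no curvature bound by itself. So neither Schoen's estimate nor a compactness argument modulo isometries has anything to start from.

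Even the $C^0$ confinement is not available where you need it. With Definition~\ref{d:asymptotic}, the hypothesis only constrains the vertical sheets above $\{t=t_0\}$ and below $\{t=-t_0\}$. Near a vertex $p_i$, the condition $\partial_\infty E=\mathcal P$ only places the surface in a Euclidean neighbourhood of $\{p_i\}\times\R$, which is hyperbolically enormous. Your Scherk barrier at the vertex is a hope, not an argument. Some independent a priori control of the end has to come before any graph description can be obtained. Examples would be a positive Jacobi field, a curvature bound, or a direct analysis of the harmonic height function and its Hopf differential on the annulus.

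\textbf{Step 2.} Even granting Step~1 as stated, Gauss--Bonnet does not give a uniform bound. The outer boundary of $E(n)$ contains the arcs of $E$ on the horocylinders. These run from $\{t=n\}$ to $\{t=-n\}$, so they have length at least $2n$; if the horocylinders shrink, the arcs on $\{t=\pm n\}$ become long as well. Qualitative $C^2$ convergence only gives $k_g=o(1)$ pointwise, hence $\bigl|\int k_g\bigr|\le o(1)\cdot\mathrm{length}$. That need not stay bounded, let alone converge to the model value $0$.

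What you need is an integrable decay rate for the graph functions. For instance, exponential decay in $t$ along each sheet, uniform in the corner regions as the horocylinders shrink, would suffice. These quantitative asymptotics are the real content of the converse, and the proposal neither proves them nor reduces to them.

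Your bookkeeping is otherwise correct. An outer contribution tending to $0$, together with $4(m+1)$ exterior angles tending to $\pi/2$, does reproduce~\eqref{eq}.
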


In what follows, we describe the known families of surfaces with finite total curvature; see Table~\ref{tab:examples} for a summary of the general picture.

\medskip 

\emph{\bf Vertical planes}: Given a complete geodesic $\alpha\subset\hr$, we call $\alpha\times\r$ a vertical plane. It is totally geodesic, so it has zero total curvature. Hauswirth, Sa Earp and Toubiana~\cite{hst} proved that vertical planes are the only complete minimal surfaces with vanishing total curvature in $\hr$. 

\medskip 

\emph{\bf Scherk graphs}: Let $\Omega$ be an ideal geodesic polygon with $2k$ edges, $k\geq 2$, and prescribed boundary values $+\infty$ and $-\infty$ alternately. Assume $\Omega$ satisfies the Jenkins-Serrin conditions (i.e. the flux conditions obtained in~\cite{NR,CR} for the Jenkins-Serrin problem to admit a solution). Hence there exists a unique (up to vertical translations) minimal vertical graph over $\Omega$ with the prescribed boundary values, called Scherk graph (over $\Omega$) for similarity with the corresponding (non-complete) examples in the Euclidean space. 

Collin and Rosenberg~\cite{CR} used Fatou’s convergence theorem in the
Gauss-Bonnet formula to calculate the total curvature of this
example: it has total curvature $-2\pi(k-1)$. 
We then observe that Scherk graphs provide examples for any possible non-zero value of the total curvature.

For any fixed $k$, there is a $2k-4$ parameter family of Scherk graphs up to isometries. In particular, there is a unique Scherk graph over an ideal quadrilateral (i.e. for $k=2$).  In~\cite{PR} Pyo and the second author proved that it corresponds to the unique minimal surface with total curvature $-2\pi$. 

\medskip 

\emph{\bf Horizontal catenoid}: In~\cite{MoR,p}, it was constructed a 1-parameter family of minimal surfaces with genus zero and two planar ends (asymptotic to vertical planes, see Definition~\ref{def:ends} below), see Figure~\ref{Fig-CTF-EJEMPLOS}. They have total curvature $-4\pi$. Any one of these examples is symmetric with respect to three planes: one horizontal (i.e. a copy of $\h^2$ at some fixed height) and two vertical ones. Any half surface on one side of the horizontal plane of symmetry is a vertical graph; we say that it is a \emph{vertical bigraph}. 

These examples are called \emph{horizontal catenoids} as they share the same properties as the catenoid in $\R^3$: they are annuli with finite total curvature $-4\pi$. We add "horizontal" in the name to distinguish them from the rotationally invariant catenoids with vertical axis constructed by Nelli and Rosenberg in~\cite{NR}. 

Schoen proved that the catenoid is the only minimal surface in $\R^3$ with finite total curvature and two embedded ends. In $\hr$ we have a $1$-parameter family of non-isometric horizontal catenoids. The parameter can be considered as the distance between the asymptotic vertical planes, going from zero (getting as a limit the vertical plane) to the distance between two opposite horizontal geodesics in the asymptotic boundary of the Scherk graph over an ideal quadrilateral.  When approaching this maximal value for the parameter, the upper half of the catenoids converge to the Scherk graph when fixing the upper points of the necks at a fixed point, say the origin; so the catenoids split into the union of two Scherk graphs.

Hauswirth, Nelli, Sa Earp and Toubiana~\cite{hnst} proved a Schoen-type theorem in this setting: the
horizontal catenoids are the only complete minimal surfaces in $\hr$
with finite total curvature and two planar ends. (Asymptotic for them implied that the ends are Killing
graphs over a vertical plane, in particular embedded; but this extra
hypothesis can be removed thanks to the results proved in~\cite{HMR}.) 

\medskip 

\emph{\bf $k$-noids}: As a generalization of the horizontal catenoids, a family of properly embedded minimal surfaces with genus zero and $k\geq 3$ planar ends were obtained in~\cite{MoR,p}, see Figure~\ref{Fig-CTF-EJEMPLOS}. All these examples are vertical bigraphs and have total curvature $-4\pi(k-1)$. 

They were named for similarity with the $k$-noids in $\R^3$ constructed by Jorge and Meeks in~\cite{JM}: they have the same
topology, they have finite total curvature and they are vertical bigraphs. In $\R^3$, each end of a $k$-noid
is asymptotic to half a catenoid, so they are non-embedded as far as $k\geq 3$. 

In~\cite{MoR} it was proved that, for any $k$, there is a $(2k-3)$-parameter family of $k$-noids.

The analogous constant mean curvature (CMC) examples in the symmetric case  with mean curvature $0<H\leq \frac 1 2$ were constructed in \cite{CMR}.

\medskip 

\emph{\bf Genus one $k$-noids}: In~\cite{CM}, it was constructed a 1-parameter family of minimal surfaces with genus one and $k\geq 3$ planar ends, see Figure~\ref{Fig-CTF-EJEMPLOS}. All these examples are vertical bigraphs and they are symmetric with respect to $k$ vertical planes; they look like $k$-noids with a hole in the middle. These examples  have total curvature $-4\pi k$. 

In~\cite{MMR}, Mart\'\i n, Mazzeo and the second author obtained using a different approach surfaces with similar properties for a large number of ends, not necessarily disposed symmetrically. They proved that, for any large $k$, there is a $(2k-3)$-parameter family of genus one $k$-noids. They also constructed examples of minimal surfaces with finite total curvature, planar ends (maybe not disposed cyclically) and any positive genus. The number of ends of these examples is arbitrarily large depending on the genus. It was also proved that, for any fixed genus $g\geq 1$ and up to isometries, every example lies in a $(2k-3)$-parameter family, being $k$ the number of ends. All these examples have total curvature $-4\pi (g+k-1)$. 

The ends of all known examples with positive genus are asymptotic to vertical planes. 
Using the Maximum Principle with vertical planes we deduce that vertical planes are the only examples with one planar end. By the Shoen-type Theorem explained above, the only example with two planar ends is the horizontal catenoid. Hence, the minimum number of ends of a genus $g\geq 1$ minimal $k$-noid is three. In this sense, the construction by the first author and Manzano is sharp.   

While the number of ends of a finite total curvature minimal surface in $\mathbb{R}^3$ is bounded by its genus~\cite{MPR}, no such upper bound holds in $\mathbb{H}^2 \times \mathbb{R}$~\cite{MMR}. This raises the interesting question of finding the minimal number of ends for such surfaces in $\mathbb{H}^2 \times \mathbb{R}$.  Minimal surfaces with finite total curvature, positive genus and one or two non-planar ends are expected. 

For CMC surfaces with mean curvature $0<H\leq \frac 1 2$, the first author and Santiago in~\cite{CS} constructed analogous examples: genus one surfaces with ends asymptotic to vertical cylinders.  In this context, it is possible to construct CMC surfaces with   $k\geq 2$ ends.

\medskip 

\emph{\bf Twisted-Scherk examples}: In~\cite{PR} the first examples of minimal surfaces with finite total curvature that are neither vertical graphs nor vertical bigraphs were constructed. They are the union of two vertical graphs along a vertical geodesic in their boundary, see Figure~\ref{Fig-CTF-EJEMPLOS}. One of these vertical graphs is obtained as a solution to a Jenkins-Serrin problem (with $\pm\infty$ boundary values disposed alternately) over a domain with one interior vertex and $2k+1$ ideal vertices,  $k\geq 1$. The other vertical graph is obtained from the previous one by reflection about the vertical geodesic in its boundary. These examples are simply-connected and they have total curvature $-4\pi k$. 

We observe that some of these examples could be non-embedded, as the domain is not necessarily convex (the angle at the interior vertex could be bigger than $\pi$). 

We also observe that in the case $k=1$, the domain must be symmetric with respect to a geodesic passing through the interior vertex (with endpoint one of the ideal vertices) in order to satisfy the Jenkins-Serrin conditions obtained in~\cite{NR,CR}. Up to an isometry we can assume that the interior vertex is placed at origin and the geodesic (of symmetry) is $\{y=0\}$. Once we fix an ideal vertex in $\{y>0\}$ the third one in $\{y<0\}$ is determined. Hence there is a 1-parameter family of twisted-Scherk examples with total curvature $-4\pi$, all of them containing a horizontal geodesic.

\begin{figure}[h]
\begin{center}
\includegraphics[height=8cm]{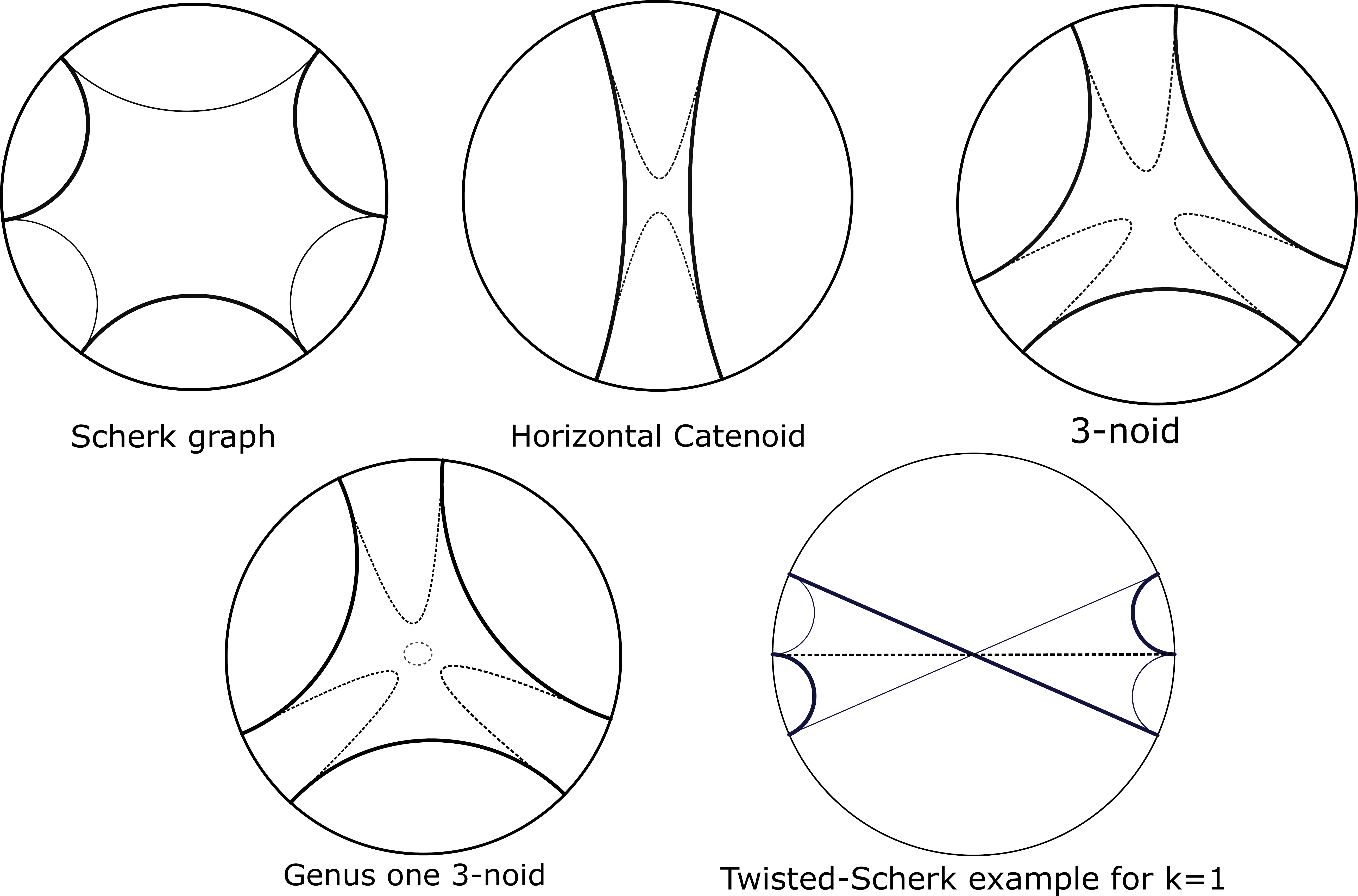}
\end{center}
\caption{Projection of some minimal surfaces with finite total curvature in $\mathbb H^2\times\R$: Scherk graphs, horizontal catenoids, $k$-noids, genus one $k$-noids, and twisted-Scherk surfaces. The solid lines represent geodesic in $\h^2\times\{\pm\infty\}$ and the dotted lines represent symmetry curves of the minimal surfaces or horizontal geodesics contained in the minimal surfaces.}
	\label{Fig-CTF-EJEMPLOS}
\end{figure}

\begin{table}[htbp]
\centering
\label{tab:examples}
\renewcommand{\arraystretch}{1.35}
\begin{tabular}{|p{3.cm}|c|c|p{1.9cm}|c|c|}
\hline
\textbf{Surface} & \textbf{Genus $g$} & \textbf{Ends $r$} & \textbf{$m_i$} & \textbf{Total curvature} & \textbf{Reference} \\
\hline
Vertical plane & $0$ & $1$ & $m_1=0$ & $0$ & [12] \\
\hline
Scherk graph over ideal $2k$-gon ($k\geq 2$) & $0$ & $1$ & $m_1=k-1$ & $-2\pi(k-1)$ & [5, 20] \\
\hline
Horizontal catenoid & $0$ & $2$ & $m_1=m_2=0$ & $-4\pi$ & [19, 23] \\
\hline
$k$-noid  & $0$ & $k\geq 3$ & $m_i=0,\ \forall i$ & $-4\pi(k-1)$ & [19, 23] \\
\hline
Genus one $k$-noid  & $1$ & $k\geq3$ & $m_i=0,\ \forall i$ & $-4\pi k$ & [1] \\
\hline
Genus $g$  $k$-noid ($g\geq 1$) & $g$ & $k>>1$ & $m_i=0,\ \forall i$ & $-4\pi(g+k-1)$ & [15] \\
\hline
Twisted-Scherk example ($k\geq 1$) & $0$ & $1$ & $m_1=2k$ & $-4\pi k$ & [24] \\
\hline
New example Prop.~5.1 and Remark~\ref{remark: Ejemplo k} ($k\geq 2$)  & $0$ & $1$ & $m_1=2k-1$  & $-2(2k-1)\pi $ &  Prop.~5.1, Figure~\ref{Fig-Superficies} \\
\hline
New example Prop.~5.4 & $0$ & $1$ & $m_1=3$ & $-6\pi$ &  Prop.~5.4, Figure~\ref{Fig-Superficies}\\
\hline
\end{tabular}
\caption{Known examples of complete minimal surfaces with finite total curvature in $\mathbb{H}^2\times\mathbb{R}$.}
\end{table}

\newpage 

Next we recall the known classification results for (complete) minimal surfaces with finite total curvature. We also include a non-existence result for examples featuring at least one planar end and one Scherk-type end, defined as follows:

\newpage

\begin{defi}\label{def:ends}\mbox{}
    \begin{itemize}
    \item An end is said to be \emph{planar} if its associated $m_i$ is zero, i.e. it is asymptotic with multiplicity one to the asymptotic boundary of a vertical plane.
	\item  We say that an admissible polygon at infinity is  \emph{Scherk-type} if its vertical projection is injective and it is the boundary of a convex ideal geodesic polygon with $2k$ edges, for some $k\geq 2$. We say that an end is \emph{Scherk-type} if it is asymptotic with multiplicity~one to a Scherk-type admissible polygon.
	\end{itemize}
      \end{defi}

\begin{remark}\label{rem:zero}
We deduce from item~{\it(\ref{item})} in Theorem~\ref{th:ctf} that any planar or Scherk-type end is embedded.  
We also observe that a Scherk-type end is not necessarily  asymptotic to the asymptotic boundary of a Scherk graph because the end has not necessarily zero vertical flux, see equation~\eqref{eq:flux} below.
\end{remark}

In~\cite{hst} a classification for vertical planes as the only minimal surfaces in $\hr$ with zero total curvature was obtained. We include an alternative proof using Theorem~\ref{th:ctf}: 
From~\eqref{eq} we deduce that if $M$ has zero total curvature, then its genus must be zero and it only has one planar end. (The surface cannot be compact so $r\geq 1$.) Then $M$ is simply-connected and $\partial_\infty M= \partial_\infty(\alpha\times\r)$, for some complete geodesic $\alpha\subset\h^2$. Now we conclude using the maximum principle with vertical planes that $M=\alpha\times\r$.

The next value for the total curvature is $-2\pi$. In~\cite{PR} it was proved that the only minimal surface with total curvature $-2\pi$ is the Scherk graph over an ideal quadrilateral. We sketch the idea of the proof. By~\eqref{eq}, a minimal surface $M$ with total curvature $-2\pi$ must be simply-connected with $m_1=1$. Since the end cannot be planar, 
then either the horizontal geodesics in $\h^2\times\{+\infty\}$ or $\h^2\times\{-\infty\}$ (or both) do not intersect. By item (5) in Theorem~\ref{th:ctf}, the vertical sheets of $M$ are horizontal graphs. That allows us to start the Alexandrov reflection principle using horizontal planes and conclude that $M$ must be a vertical graph, and then a Scherk graph.

The idea behind this proof (used in~\cite[Theorem 6 and 7]{HMR}) yields the following non-existence proposition.

\begin{prop}\label{th:Alexandrov1}
Let us consider $p\geq 1$ vertical planes $\Pi_1,\ldots,\Pi_p$ and $q\geq 1$ Scherk-type polygons at infinity
$\mathcal{P}_1,\ldots, \mathcal{P}_q$. Let $\Omega\subset\h^2$ be the convex hull of their vertical projection, i.e. the convex hull of $(\cup_i \pi(\Pi_i))\cup(\cup_j \pi(\mathcal{P}_j))$ in $\h^2$. Suppose that the ends can be ordered cyclically, more specifically: $\pi(\Pi_i)\subset\partial\Omega$ for every $i\in\{1,\ldots, p\}$ and, for any $j\in\{1,\ldots,q\}$, there exists a horizontal geodesic $\gamma_j\subset\mathcal{P}_j\cap(\h^2\times\{-\infty\})$ such that $\pi(\mathcal{P}_j-\gamma_j)\subset\partial\Omega$. Then there is no properly embedded minimal surface
in $\h^2\times\R$ with asymptotic boundary $\partial_\infty\Pi_1 \cup \ldots \cup \partial_\infty\Pi_p\cup \mathcal{P}_1 \cup \ldots \cup \mathcal{P}_q$.
\end{prop}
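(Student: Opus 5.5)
Argue by contradiction: suppose $M$ is a properly embedded minimal surface with
$$\partial_\infty M=\partial_\infty\Pi_1\cup\cdots\cup\partial_\infty\Pi_p\cup\mathcal P_1\cup\cdots\cup\mathcal P_q .$$
The plan is to run the Alexandrov reflection principle with horizontal slices $\{t=s\}$, coming down from $t=+\infty$, and show that $M$ would have to be a vertical graph over a domain whose boundary data cannot be realized.

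First, the structure of the ends. By Theorem~\ref{th:caracterizacion-ctf} (each end is asymptotic to one of the given admissible polygons, and there are finitely many), $M$ has finite total curvature. Theorem~\ref{th:ctf} and Remark~\ref{rem:zero} then give that each end is embedded and asymptotic with multiplicity one to its polygon. By item~(\ref{item}) of Theorem~\ref{th:ctf}, outside a compact set every vertical and horizontal sheet is a horizontal graph over a vertical plane, converging to it. For a Scherk-type end, the upper vertical sheets over the geodesics of $\mathcal P_j\cap(\h^2\times\{+\infty\})$ are horizontal graphs over vertical planes whose projections bound the convex region $\pi(\mathcal P_j)$. For $|t|$ large, $M\cap\{t>s\}$ consists of these sheets together with the parts of the planar ends near $\Pi_i$. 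Since the planar ends converge to vertical planes, they are not vertical graphs, and this is what must be exploited.

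Second, I start the reflection. For $s$ large, reflect $M^+_s=M\cap\{t\ge s\}$ through $\{t=s\}$. The cyclic-ordering hypothesis ($\pi(\Pi_i)\subset\partial\Omega$, and $\pi(\mathcal P_j-\gamma_j)\subset\partial\Omega$) is what guarantees that these sheets lie over pairwise disjoint regions near $\partial\Omega$. Each upper sheet of a Scherk-type end is then a vertical graph for large $t$. This needs a refinement of item~(\ref{item}): the angle function $\nu$ has a sign on the sheet, which follows from the convexity of $\pi(\mathcal P_j)$ and the horizontal graph structure. So the reflected piece lies below $M$ and does not touch it initially.

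Third, I decrease $s$ to the first contact. Properness and the asymptotic control at infinity (the reflected sheets stay strictly separated at the ideal boundary, because the reflected asymptotic boundary sits at $-\infty$ only over the geodesics of $\mathcal P_j$ in $\h^2\times\{+\infty\}$, whose projections lie on $\partial\Omega$) rule out contact at infinity. The maximum principle rules out an interior first contact, since $M$ is not symmetric with respect to any horizontal plane: its boundary at $+\infty$ contains the geodesics of the $\mathcal P_j$ with no counterpart at $-\infty$ other than the $\gamma_j$. Hence the process continues to $s=-\infty$, and $M$ is a vertical graph.

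Finally, the contradiction. A vertical graph cannot contain an end asymptotic to a vertical plane $\Pi_i$, since such an end has vertical sheets going to both $+\infty$ and $-\infty$ over the same geodesic $\pi(\Pi_i)$. Because $p\ge1$, this is impossible.

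The main obstacle is the third step. To control contact at infinity, especially where the planar ends' sheets approach the same vertical lines as neighbouring Scherk-type sheets, I would use item~(\ref{item}) to compare horizontal-graph distances, together with the hypothesis that everything except the $\gamma_j$ projects onto $\partial\Omega$, so that all sheets lie on one side.
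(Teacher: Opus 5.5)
Your outline has the same architecture as the paper's proof: Alexandrov reflection with horizontal slices, then a vertical graph is excluded by the planar ends and a horizontally symmetric bigraph by the Scherk-type ends. However, the step that carries the whole argument is not proved, and the reason you give for it is wrong. That step is starting the reflection and excluding contact at infinity.

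Knowing that each upper sheet is a vertical graph does not imply that its reflection through $\{t=s\}$ avoids $M\cap\{t<s\}$. The reflected sheets are non-compact and reach $t=-\infty$. Nor are they separated from $M$ at the ideal boundary:
\begin{itemize}
\item the reflection of the upper sheet of a planar end has asymptotic boundary $\pi(\Pi_i)\times\{-\infty\}$, which is exactly the asymptotic boundary of the lower sheet of the same end;
\item for every endpoint $p$ of an upper geodesic, the reflected vertical piece $\{p\}\times(-\infty,s]$ lies in $\partial_\infty M$.
\end{itemize}
So your assertion that the reflected asymptotic boundary at $-\infty$ sits only over the upper geodesics of the $\mathcal P_j$ is false. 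Deciding on which side of $M$ the reflected sheets lie near these shared asymptotic pieces is precisely what must be shown. The sign of $\nu$ on the upper Scherk sheets, which you attribute to convexity, is also only asserted.

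The missing ingredient is the paper's global comparison argument.
\begin{enumerate}
\item The maximum principle with vertical planes gives $M\subset\Omega\times\R$.
\item The hypotheses $\pi(\Pi_i)\subset\partial\Omega$ and $\pi(\mathcal P_j-\gamma_j)\subset\partial\Omega$, with each $\gamma_j$ at height $-\infty$, say exactly that every geodesic $\alpha_i$ carrying an upper vertical sheet $E_i$ lies on $\partial\Omega$.
\item By item~(5) of Theorem~\ref{th:ctf}, the reflected sheet $E_i^s$ is a horizontal graph over $\alpha_i\times\R$. It can therefore be translated horizontally into the component of the exterior of $\Omega\times\R$ bounded by $\alpha_i\times\R$, where it misses $M$.
\item Sliding it back, the maximum principle rules out a first contact. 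Hence $M\cap\{t<t_0\}$ lies on one side of each $E_i^s$.
\end{enumerate}
This is what lets the Alexandrov procedure begin. It is also where the cyclic-ordering hypothesis is genuinely used, not merely to make the sheets lie over disjoint regions near $\partial\Omega$.

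You allude to this hypothesis in your last paragraph. But without the containment in $\Omega\times\R$ and the translation from outside, your claim that the reflected piece ``does not touch it initially'' has no proof, and neither does the later exclusion of contact at infinity. Once this comparison is supplied, your concluding dichotomy agrees with the paper's.
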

\begin{remark}
 Similar argument produces the same conclusion when the geodesics $\gamma_j$ are contained in $\h^2\times\{+\infty\}$. 

Note that the vertical planes $\Pi_i$ and/or the Scherk-type polygons at infinity $\mathcal{P}_j$ in Proposition~\ref{th:Alexandrov1} can share a vertical straight line in $\partial_\infty\mathbb{H}^2\times \mathbb{R}$.   
\end{remark}

\begin{proof}[Proof of Proposition~\ref{th:Alexandrov1}] Assume there exists such a surface~$M$. We deduce using the maximum principle with vertical planes that $M$ is contained in $\Omega\times\mathbb{R}$. 

Since $M$ is properly embedded, $M\cap \{t>t_0\}$ consists of a finite number of disjoint components (vertical sheets), each one of them a horizontal graph by item $(5)$ in Theorem~\ref{th:ctf}. Let $E_i$ be one such component and $\alpha_i$  be the complete geodesic such that $\partial_\infty E_i\subset\partial_\infty(\alpha_i\times\r)$. We call $E_i^s$ the reflected image of $E_i$ with respect to $\{t=t_0\}$. We translate horizontally $E_i^s$ to the connected component of the exterior of $\Omega\times\r$ bounded by $\alpha_i\times\r$ so that the translated $E_i^s$ does not intersect $M$. When going back to its original position, the translated $E_i^s$ cannot intersect $M$ by the maximum principle. Hence $M\cap \{t<t_0\}$ lies on one side of $E_i^s$. Since this argument follows for any vertical sheet, we conclude that  $M\cap \{t<t_0\}$ lies on one side (``the interior") of the symmetry of $M\cap \{t>t_0\}$ with respect to $\{t=t_0\}$. We can thus apply the Alexandrov reflection principle, leading to the conclusion that $M$ is either a vertical graph or a vertical bigraph. In both cases, we reach a contradiction by using (respectively) that $M$ has a planar or a Scherk-type end. 
\end{proof}

\begin{remark}\label{rem:scherk}
The argument in the proof of Proposition~\ref{th:Alexandrov1} can be applied in the case $p=0$ and $q=1$ (resp. $p>0$ and $q=0$) obtaining in this case that the surface is a vertical graph (resp. a vertical bigraph).  This case corresponds to~\cite[Theorem 7]{HMR} (resp.~\cite[Theorem 6]{HMR}).
\end{remark}

We conclude this section by obtaining a flux condition for the ends of a minimal surface with finite total curvature, relative to an equilibrium between the horizontal geodesics in $\h^2\times\{+\infty\}$ and $\h^2\times\{-\infty\}$.

Let $E$ be an annular end of a minimal surface. Let $c\subset E$ be a representative of the end and call $\eta$ the outward-pointing unit conormal along $c$ (i.e. if we parameterize the end on a punctured disk, the one not pointing to the puncture). We define the vertical flux of $E$ as 
\[
F(E)= \int_c \langle \eta,\partial_t\rangle .
\]
This definition does not depend on $c$. It is clear that if $M$ is a minimal disk, then $F(\partial M)=0$. 

Suppose that the end $E$ is asymptotic to an admissible polygon at infinity $\mathcal P$. We deduce from item $(3)$ in Theorem~\ref{th:ctf}, by taking a sequence of representatives of the end converging to  $\mathcal P$, that 
\begin{equation}\label{eq:flux}
F(E)=\alpha(\mathcal P)-\beta(\mathcal P),    
\end{equation}
where $\alpha(\mathcal P)$ (resp. $\beta(\mathcal P)$) denote the length in $\h^2$ of the vertical projection of the
horizontal geodesics of $\mathcal P$ in $\h^2\times\{+\infty\}$ (resp. $\h^2\times\{-\infty\}$) outside some disjoint horodisks at the vertices of $\pi(\mathcal P)$. This is one of the Jenkins-Serrin conditions in the case $M$ is a vertical graph.

\section{Minimal surfaces with finite total curvature and embedded ends}
\label{sec:emb}

From now on we assume that $M$ is a minimal end properly embedded in
$\h^2\times \r$ with finite total curvature. We know that $M$ is
asymptotic to an admissible polygon at infinity $\mathcal{P}$ which
consists of a finite number geodesic
$\bar l_1,\bar l_3,\dots, \bar l_{2k-1}$ in
$\h^2\times\left\lbrace +\infty\right\rbrace$, with $k\geq 2$, the
same number of geodesics $\bar l_2,\bar l_4,\dots,\bar l_{2k}$ in
$\h^2\times\left\lbrace -\infty\right\rbrace$, and the corresponding
vertical geodesics in $\partial_\infty \h^2\times \r$ joining
their endpoints. Let $l_i=\pi(\bar l_i)$ be their vertical projection on $\h^2$
and assume they are ordered following a fix orientation in the polygon at infinity 
$\mathcal{P}$.  

\begin{lem}\label{l1}
  Let $M\subset \h^2\times \r$ be an embedded minimal end with finite
  total curvature and $\partial_\infty M=\mathcal{P}$, an admissible polygon at infinity as above. Then no two horizontal geodesics in $\mathcal P$ can intersect transversely. In particular (using the notation above) if $l_i$ and $l_j$ intersect transversely then $i$ and $j$ cannot have the same parity.
\end{lem}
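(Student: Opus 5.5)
Suppose, to reach a contradiction, that two horizontal geodesics $\bar l_i,\bar l_j$ of $\mathcal P$ intersect transversely. Two geodesics at different heights ($+\infty$ and $-\infty$) never meet in $\partial_\infty(\h^2\times\r)$, so both must lie in the same slice, say $\h^2\times\{+\infty\}$. In the notation above this means $i$ and $j$ are both odd (both even in the other case), which gives the second sentence of the statement. It remains to rule out a transverse intersection of two geodesics in the same slice.

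The plan is to use the structure of the vertical sheets in Theorem~\ref{th:ctf}. Since $M$ is embedded and properly embedded, for $t_0$ large the set $M\cap\{t>t_0\}$ consists of finitely many pairwise disjoint vertical sheets. By multiplicity one, there are distinct sheets $E_i,E_j$ with $\bar l_i\subset\partial_\infty E_i$ and $\bar l_j\subset\partial_\infty E_j$. By item~(5), outside a compact set each of these sheets is a horizontal graph over the vertical plane $l_i\times\r$ (resp. $l_j\times\r$), converging to it. Concretely, for $t$ large the level curve $E_i\cap\{t=s\}$ is a curve in $\h^2\times\{s\}$. It lies within distance $\varepsilon(s)\to0$ of $l_i$ along any compact portion, and it diverges toward both endpoints of $l_i$. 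The same holds for $E_j$ and $l_j$.

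Let $p=l_i\cap l_j$, an interior point of $\h^2$ where the two geodesics cross transversely. Take a small disk $D$ around $p$ in which $l_i$ and $l_j$ are two transverse segments crossing at $p$. For $s$ large, inside $D$ the curve $E_i\cap\{t=s\}$ is a graph over $l_i\cap D$ that is $C^0$-close to it, so it joins the two opposite sides of $D$ cut out by $l_j$. Likewise $E_j\cap\{t=s\}$ crosses from one side of $l_i$ to the other within $D$. A standard intersection argument then forces the two curves to meet inside $D$ (intermediate value theorem, or an intersection number mod $2$ of two arcs with alternating endpoints on $\partial D$). This contradicts the disjointness of distinct sheets of an embedded end.

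The main obstacle is the uniformity in item~(5). The horizontal-graph description holds only outside a compact set of the surface, while $p$ is an interior point of $\h^2$. I would argue that "outside a compact set" includes $\{t>t_0\}$ for $t_0$ large, by properness: the region $\pi^{-1}(D)\cap\{t\le t_0\}\cap M$ is compact. The convergence to $l_i\times\r$ then holds on $\pi^{-1}(D)$ as $t\to\infty$.

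If this reading of item~(5) is too weak, I would instead invoke Definition~\ref{d:asymptotic} together with the remark following it. An end whose vertical sheets approach the intersection point of two crossing horizontal geodesics would have to form a corner there, and then it is not asymptotic to $\mathcal P$. Embeddedness excludes the alternative in which a single sheet follows both geodesics.
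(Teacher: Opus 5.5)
Your proposal is correct and follows the same route as the paper. The paper also takes the two vertical sheets asymptotic to $\bar l_i$ and $\bar l_j$, uses item~(5) of Theorem~\ref{th:ctf} to see them as horizontal graphs converging to the transversally intersecting vertical planes $l_i\times\R$ and $l_j\times\R$, and concludes they must meet, contradicting embeddedness. Your crossing-arcs argument in a small disk around $l_i\cap l_j$ only makes explicit the step the paper states in one line; the appeal to properness there is unnecessary, since the compact set in item~(5) has bounded height and is automatically avoided for large $t$.
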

\begin{proof}
 We use the notation above. Suppose that $\bar l_1$ and $\bar l_3$ intersect transversely (the remaining cases follow similarly). By Theorem~\ref{th:ctf} we know that there are two vertical sheets $E_1,E_2$ of $M$ (see definition~\ref{d:asymptotic}) with $\partial_\infty E_1\subset\partial_\infty(l_1\times (0,+\infty))$ and $\partial_\infty E_2\subset\partial_\infty(l_3\times(0,+\infty))$, which are horizontal graphs on the vertical planes $l_1\times \R$ and $l_3\times\R$ respectively. Since these vertical planes intersect transversely, then $E_1$ and $E_2$ must intersect, a contradiction with the embeddedness of $M$.
\end{proof}

Lemma~\ref{l1} follows from the fact that no two vertical sheets of
$M$ can intersect. Now let us use that two horizontal sheets cannot
intersect either.
Denote by $p_i,p_{i+1}\in \partial _\infty \h^2$ the ideal endpoints
of $l_i$ using cyclical notation (i.e. $p_0=p_{2k}$ and
$p_{2k+1}=p_1$) and let $v_i=\{p_i\}\times\r$ be the vertical straight
lines contained in $\mathcal P$.

\begin{lem}\label{l2}
  Let $M\subset \h^2\times \r$ be an embedded minimal end with finite
  total curvature and let $\mathcal{P}$ be its asymptotic boundary.
  Suppose there exist two pairs of consecutive geodesics in $\pi(\mathcal P)$ sharing a common endpoint in $\partial_\infty\h^2$ such that any pair defines a different horizontal sheet of $M$ near the vertical line over the common endpoint. (In particular, the sub-indices of the four geodesics are all different.) Let $\gamma\subset\h^2$ be a
  geodesic near the common endpoint that intersects them. Thus, with the notation above, the four
  geodesics can be called $l_i,l_{i+\varepsilon_i}, l_j,l_{j+\varepsilon_j}$, with $\varepsilon_i,\varepsilon_j\in\{1,-1\}$ in such a way that they intersect $\gamma$ in one of the following orders (non-strictly, in the sense that two or
  more of the four geodesics can coincide):
  \begin{enumerate}
  \item $l_i, l_j, l_{i+\varepsilon_i}, l_{j+\varepsilon_j}$, with
    $(-1)^{i+j}=1$ (i.e. $i$ and $j$ have the same parity).
  \item $l_i,l_{i+\varepsilon_i}, l_j, l_{j+\varepsilon_j}$. 
  \end{enumerate}
\end{lem}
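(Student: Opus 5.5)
The plan is to reduce the lemma to a planar statement: two disjoint curves in the vertical plane $\gamma\times\r$ cannot cross. Let $p$ be the common ideal endpoint, and let $H_1,H_2$ be the two distinct horizontal sheets of $M$ near $v=\{p\}\times\r$ given in the hypothesis. Choose the labelling so that $H_1$ is associated with the pair $l_i,l_{i+\varepsilon_i}$ and $H_2$ with the pair $l_j,l_{j+\varepsilon_j}$. Consecutive geodesics in $\mathcal P$ lie at opposite heights. So for $H_1$ exactly one of $\bar l_i,\bar l_{i+\varepsilon_i}$ is in $\h^2\times\{+\infty\}$, the one with odd index, and the other is in $\h^2\times\{-\infty\}$. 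The same holds for $H_2$. Since $M$ is embedded, $H_1\cap H_2=\emptyset$, as observed after Definition~\ref{d:asymptotic}.

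\textbf{Step 1: the trace of each sheet on $\gamma\times\r$.} Take $\gamma$ close enough to $p$ that the relevant part of $\gamma\times\r$ lies in the horocylinder defining the sheets. By item~(\ref{item}) of Theorem~\ref{th:ctf}, each sheet is, outside a compact set, a horizontal graph converging to zero over the vertical planes of its asymptotic geodesics. From this I will deduce that $c_1=H_1\cap(\gamma\times\r)$ contains a properly embedded arc. Along this arc, as $t\to+\infty$ the arc is asymptotic to the vertical line over the point where $\gamma$ meets the top geodesic of the pair. As $t\to-\infty$ it is asymptotic to the vertical line over the point where $\gamma$ meets the bottom geodesic. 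The same holds for $c_2$. I expect this to be the main obstacle: one has to check that the intersection really is a single proper arc running from top to bottom, with no extra components. The first thing I would try is to use that $\nu\to0$ uniformly (item~(3) of Theorem~\ref{th:ctf}) and the graph description, so that $H_k$ meets $\gamma\times\r$ transversally outside a compact set, with trace close to the two vertical half-lines. Then connect the upper and lower pieces inside the horocylinder using the connectedness of $U_k$.

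\textbf{Step 2: the planar argument.} Identify $\gamma\times\r$ with a strip $\gamma\times\r\cong\r\times\r$. The curves $c_1,c_2$ are disjoint proper arcs, each going from $t=+\infty$ to $t=-\infty$. A Jordan-curve argument shows that their horizontal order is the same at the top and at the bottom: $c_1$ separates the strip, and $c_2$ lies in one component. Consequently, the order along $\gamma$ of the two top geodesics equals the order of the two bottom geodesics (non-strictly, since the limit positions may coincide).

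\textbf{Step 3: combinatorics.} After possibly swapping the roles of $i$ and $j$ and reversing the orientation of $\gamma$, we may assume $l_i$ is the first of the four geodesics along $\gamma$. Two cases remain.

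\emph{Same parity of $i$ and $j$.} The tops are $l_i,l_j$ and the bottoms are $l_{i+\varepsilon_i},l_{j+\varepsilon_j}$ (or the reverse). Step 2 forces $l_{i+\varepsilon_i}$ to come before $l_{j+\varepsilon_j}$. This leaves exactly the interlaced order (1) and the separated order (2); the nested order $l_i,l_j,l_{j+\varepsilon_j},l_{i+\varepsilon_i}$ is excluded.

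\emph{Different parity.} The tops are $l_i,l_{j+\varepsilon_j}$ and the bottoms are $l_{i+\varepsilon_i},l_j$. Since $l_i$ is the first top, Step 2 requires $l_{i+\varepsilon_i}$ to precede $l_j$. This rules out both the interlaced and the nested orders, leaving only (2).

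Putting the two cases together gives the two alternatives of the lemma, with the parity restriction in (1).
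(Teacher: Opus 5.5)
Your argument is the paper's: intersect the two horizontal sheets with $\gamma\times\r$, obtain two disjoint diverging curves, each joining the trace of its top geodesic to the trace of its bottom geodesic, and read off the admissible orders from the fact that one curve lies on one side of the other. Your Steps~2 and~3 spell out what the paper compresses into ``hence the only possibilities are those described''.

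For Step~1, the paper uses item~(5) of Theorem~\ref{th:ctf} directly. Each horizontal sheet is a horizontal graph over any vertical plane having $\{p\}\times\r$ in its asymptotic boundary, so its trace is a single diverging curve projecting horizontally onto a vertical line. That is the property you should invoke. Connectedness of $U_k$ by itself does not make the trace on $\gamma\times\r$ connected. With the graph property, your Jordan-curve step reduces to an intermediate-value argument.

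There is one slip in Step~3. In the different-parity case, your own criterion ($l_{i+\varepsilon_i}$ before $l_j$) does not exclude the interlaced order $l_i,l_{j+\varepsilon_j},l_{i+\varepsilon_i},l_j$. There the tops sit at positions $1,2$ and the bottoms at $3,4$, so the two curves need not cross, and ``leaving only (2)'' is false. The lemma is unaffected: renaming the second pair with base index $j+\varepsilon_j$, which has the parity of $i$, turns this order into (1).

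The clean fix is to use the freedom $\varepsilon_i,\varepsilon_j\in\{1,-1\}$ in both pairs. You need this anyway, besides swapping the pairs and reversing $\gamma$, to make $l_i$ first. Take $l_i$ first overall and $l_j$ first within its pair. Then only the separated, interlaced and nested orders remain. The nested order always forces a crossing, and the interlaced order is crossing-free exactly when $i$ and $j$ have the same parity.
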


\begin{proof}
  We call $l_i,l_{i+\varepsilon_i}, l_j$ and $l_{j+\varepsilon_j}$, for some $\varepsilon_i,\varepsilon_j\in\{1,-1\}$ the four geodesics in the hypothesis. Let $q\in\partial_\infty\h^2$ be their common endpoint. 
We will prove that we can rename the geodesics so that they intersect $\gamma$ following one of the desired orders. 
  
  By Theorem~\ref{th:ctf} there are two different horizontal sheets for some horocylinder $\mathcal H$ at $q$, $E_i$ and $E_j$, such that $\partial_\infty E_s=(\bar l_s\cap\mathcal{H})\cup(\{q\}\times\R)\cup(\bar l_{s+\varepsilon_s}\cap\mathcal{H})$ for $s\in\{i,j\}$. We know that $E_i, E_j$ are horizontal graphs over any vertical plane containing $\{q\}\times\R$ at its asymptotic boundary. 
  
  We can assume that the geodesic $\gamma\subset\h^2$ intersects the  geodesics $l_i,l_{i+\varepsilon_i}, l_j, l_{j+\varepsilon_j}$ inside $\pi(\mathcal H )$. Thus the vertical plane $\gamma\times\R$ intersects $E_s$, with $s\in\{i,j\}$, along a diverging curve (contained in $E_s$) traveling from $\bar l_s$ to $\bar l_{s+\varepsilon_s}$, that projects horizontally onto the vertical line $(l_s\cap \gamma)\times\r$. By embeddedness
  these two curves cannot intersect, so one lies on one side of the other when considered on the vertical plane $\gamma\times\R$, see Figure~\ref{Fig-Lemma-curves}. Hence the only possibilities are those described in the lemma, up to possibly rename the geodesics.
  \end{proof}

\begin{figure}[h]\label{Fig-Lemma-curves}
\begin{center}
\includegraphics[height=8cm]{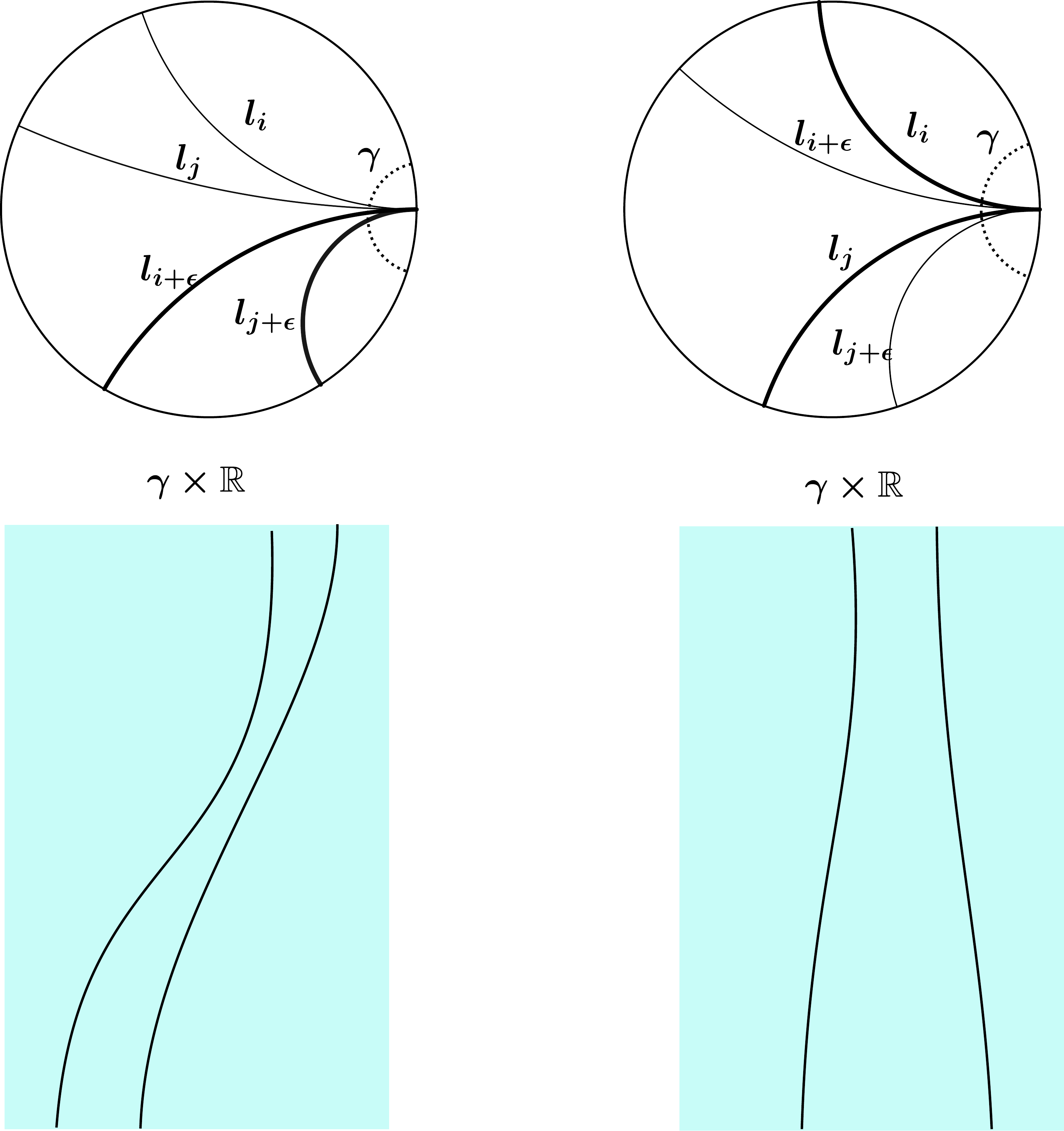}
\end{center}
\caption{Two particular examples of Lemma~\ref{l2} lying in cases {\it (1)} (left) and {\it (2)} (right). Up: the geodesics of $\pi(\mathcal P)$ sharing a common vertex. Down: the diverging curves in $\gamma\times\mathbb R$.}
	\label{fig:HorizontalSheets}
\end{figure}

\begin{remark}\label{rem:order}
In Lemma~\ref{l2} we have really proved that if we have two horizontal sheets with a common vertical line in its asymptotic boundary, then we intersect one horizontal sheet before the other (by embeddedness) when we go around a circumference near $\partial_\infty\h^2$. Fix the positive orientation in the circumference and suppose we intersect first $E_i$. We then say that $E_j$ is \emph{``above"} $E_i$ or $E_i$ is \emph{``below"} $E_j$. 

That defines an order in the geodesics as well. Thus in Lemma~\ref{l2} we can write $l_i\leq l_j\leq l_{i+\varepsilon_i}\leq l_{j+\varepsilon_j}$ in $(1)$ and $l_i\leq l_{i+\varepsilon_i}\leq l_j\leq l_{j+\varepsilon_j}$ in $(2)$. We remark that even in the case when the geodesics $l_i,l_j$ coincide in ($1$), we will not write $l_j\leq l_i\leq l_{i+\varepsilon_i}\leq l_{j+\varepsilon_j}$ as we will take into account the order of the horizontal sheets to order the geodesics. 
\end{remark}

\begin{lem}\label{l:1side}
    Let $M\subset\h^2\times\mathbb R$ be an embedded minimal end with finite total curvature and let $\mathcal P$ be its asymptotic boundary. If two or more consecutive geodesics in $\pi(\mathcal P)$ coincide, then one of the following cases hold:
    \begin{enumerate}
        \item $k=1$ and $l_1=l_2$ (i.e. the end is planar). 
        \item $k>1$ and there exist different $i,j\in\{1,\dots,2k\}$ such that $l_{i-1}\neq l_i=l_{i+1}=\dots=l_j\neq l_{j+1}$ with $l_{i-1}, l_{j+1}$ in different components of $\h^2-\{l_i\}$ (in particular, $M$ cannot project to one side of $l_i$).
    \end{enumerate}
\end{lem}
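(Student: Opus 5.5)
The plan is to fix a maximal run of coincident consecutive projections $l_i=l_{i+1}=\dots=l_j$ and to show that the two neighbouring geodesics $l_{i-1}$ and $l_{j+1}$ cannot lie on the same side of $l_i$ (nor coincide with it). The tool is the maximum principle against the vertical plane $l_i\times\R$, together with the asymptotic description of sheets in item~(5) of Theorem~\ref{th:ctf}.

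First I handle the case in which the run covers the whole polygon. Then every horizontal geodesic of $\mathcal P$ projects onto one geodesic $l$, so $\partial_\infty M\subset\partial_\infty(l\times\R)$. Embeddedness of the end forces the vertical sheets over $l$ to be disjoint horizontal graphs over $l\times\R$ converging to it. Lemma~\ref{l2} applied at the endpoints of $l$ then forces multiplicity one, which gives $k=1$ and $l_1=l_2$, i.e.\ a planar end. If instead some $l_s$ differs from $l_i$, I take a maximal run with $l_{i-1}\neq l_i$ and $l_{j+1}\neq l_i$. Here $i\neq j$ because the hypothesis says at least two consecutive geodesics coincide.

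The main step is to exclude the possibility that $l_{i-1}$ and $l_{j+1}$ both lie in the closure of the same component $H^+$ of $\h^2-l_i$. Suppose they do. The run consists of geodesics alternately at heights $+\infty$ and $-\infty$, joined by vertical lines over the two endpoints $p,q$ of $l_i$. These vertical lines over $p$ and $q$ come with horizontal sheets, each of which is a horizontal graph over $l_i\times\R$ near $\{p\}\times\R$ or $\{q\}\times\R$. Consider the piece of $M$ asymptotic to the run, namely the union of the corresponding vertical and horizontal sheets. I would then show that this piece lies on one side of $l_i\times\R$ near infinity, namely the side of $H^+$ or the opposite side.

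Next I take the vertical plane $\gamma\times\R$, where $\gamma$ is a geodesic in $H^-$ disjoint from and ultrapararallel to $l_i$, and move it towards $l_i$. As long as $\gamma\times\R$ stays away from $l_i\times\R$, it touches only a compact part of $M$ together with sheets that are graphs. So a first-contact argument, the maximum principle with vertical planes, forces the part of $M$ near the run to lie in $H^+\times\R$ once it has entered from the far side. But the sheets asymptotic to $\bar l_i,\dots,\bar l_j$ come in with alternating heights and must turn around at $p$ and $q$. Comparing with Lemma~\ref{l2} and Remark~\ref{rem:order} at the common endpoints, the horizontal sheets of the run would have to be nested on the $H^-$ side, which is incompatible with the neighbours returning to $H^+$. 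The result is either a contradiction or the conclusion that the end projects into one side of $l_i$. In the latter case the maximum principle against $l_i\times\R$, combined with the fact that the sheets converge to $l_i\times\R$, gives an interior tangency or an asymptotic tangency at infinity. I would rule out the asymptotic tangency using the boundary maximum principle at infinity, as in the proof of Proposition~\ref{th:Alexandrov1}, by translating a reflected sheet.

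I expect the hardest part to be making rigorous the claim that the end cannot lie on one side of $l_i\times\R$ while being asymptotic to it along several sheets. The difficulty is that $M$ is only an end, with compact boundary, so a global maximum principle is not immediately available. My first attempt would be to translate $l_i\times\R$ hyperbolically along the direction orthogonal to $l_i$, and to use the fact that the boundary curve of the end is compact and so avoids the planes close to $l_i\times\R$ near infinity.
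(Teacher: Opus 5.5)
Your plan has a genuine gap. The maximum principle with vertical planes, which you make the engine of the argument, can neither be run as you describe nor reach a contradiction.

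It cannot be run because $M$ is only an end. Its compact boundary may lie anywhere, and geodesics of $\pi(\mathcal P)$ outside the run may well lie in $H^-$. Sliding $\gamma\times\R$ in from $H^-$ therefore meets non-compact parts of $M$, and there is no first contact to exploit. Your claim that the piece of $M$ near the run lies on one side of $l_i\times\R$ is also unsupported. Item~(5) of Theorem~\ref{th:ctf} gives horizontal graphs over $l_i\times\R$ whose graph functions tend to zero, but says nothing about their sign.

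More seriously, the endpoint of your argument is false. Nothing prevents a minimal surface from lying on one side of a vertical plane while being asymptotic to it along a sheet. A Scherk graph over an ideal quadrilateral $Q$ lies in $Q\times\R$, hence on one side of each $l_s\times\R$, and is asymptotic to every one of them. The translated reflected sheet in the proof of Proposition~\ref{th:Alexandrov1} only serves to start Alexandrov reflection with horizontal planes for a complete surface inside a convex cylinder; it gives no tangency principle at infinity. So your dichotomy ``contradiction or one-sidedness'' does not close.

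The obstruction is purely topological, and it lives in the step you leave vague (``nested on the $H^-$ side''). Disjoint sheets are ordered, and that order must be consistent along the run. The paper normalizes $l_i=\{y=0\}$ and $l_{i-1}\subset\{y<0\}$ with common endpoint $(1,0)$. It then splits by the parity of $j-i$, which decides at which endpoint of $l_i$ the geodesic $l_{j+1}$ arrives.

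\emph{Case $j-i$ odd.} Here $l_{j+1}$ also ends at $(1,0)$, and $\bar l_i,\bar l_j$ lie at opposite heights. The traces on $\gamma\times\R$ of the horizontal sheets for $(l_{i-1},l_i)$ and $(l_j,l_{j+1})$ are disjoint arcs. If $l_{j+1}\subset\{y<0\}$, their ends would interlace and the arcs would have to cross; this is Lemma~\ref{l2}.

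\emph{Case $j-i$ even.} Here $l_{j+1}$ ends at $(-1,0)$. At $(1,0)$ the fold sheet $E_j$ for $l_{j-1}=l_j$ lies above the sheet $E_i$ for $l_{i-1},l_i$, in the sense of Remark~\ref{rem:order}. The vertical sheets over $\bar l_i$ and $\bar l_j$ are at the same height and are disjoint horizontal graphs over the same plane $l_i\times\R$, hence ordered; by continuity they inherit this order. Carry it to $(-1,0)$, where the positive orientation runs from $\{y>0\}$ to $\{y<0\}$. There it says that the sheet for $l_i,l_{i+1}$ is above the sheet for $l_j,l_{j+1}$, which forces $l_{j+1}\subset\{y>0\}$.

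The same ordering bookkeeping, not Lemma~\ref{l2} alone (which is vacuous when all four geodesics coincide), is what excludes $\pi(\mathcal P)$ being a single geodesic when $k>1$. At each endpoint the fold sheets match upper to lower vertical sheets in an order-preserving way. Both matchings are therefore equal, and the polygon would close after two edges.
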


\begin{proof}
    If $k=1$ the only possibility is $l_1=l_2$, and item {\it (1)} hold. Let us then assume $k>1$. 
    
    Since $M$ is embedded, $\pi(\mathcal{P})$ cannot be a geodesic. Hence there exist $i,j\in\{1,\dots,2k\}$ such that $l_{i-1}\neq l_i=\dots=l_j\neq l_{j+1}$. 
    We can assume $l_i=\{y=0\}$, $l_{i-1}\subset\{y<0\}$, and $(1,0)$ is the common endpoint of $l_i,l_{i-1}$. Let us prove that $l_{j+1}\subset\{y>0\}$.

    If $i,j$ have different parity then $l_{i-1},l_{j+1}$ share  $(1,0)$ as common endpoint. By Lemma~\ref{l2} the only possibility is $l_{i-1}\leq l_i\leq l_j\leq l_{j+1}$. Hence $l_{j+1}\subset\{y>0\}$.

    Let us suppose $i,j$ have the same parity. If we call $E_i$ (resp. $E_j$) the horizontal sheet defined by $l_{i-1},l_i$ (resp. $l_{j-1}=l_j$), then $E_j$ is ``above" $E_i$. Let us call $M_i$ (resp. $M_j$) the vertical sheet having $\bar l_i$ (resp. $\bar l_j$) in its asymptotic boundary. It intersects $E_i$ (resp. $E_j$) in an open set. By continuity, we find $M_j$ first when coming from $\{y>0\}$. Now call $E'_i$ (resp. $E'_j$) the horizontal sheet defined by $l_i, l_{i+1}$ (resp. $l_j, l_{j+1}$). It coincides with $M_i$ (resp. $M_j$) in an open set. Therefore we get that $E'_i$ must be "above" $E'_j$, and the lemma follows.
\end{proof}

As an example of item {\it (1)} in Lemma~\ref{l:1side} we have any planar end (for example, any end of a horizontal catenoid). In Section~\ref{sec:examples} we construct new examples that are in  situation {\it (2)}, see Proposition~\ref{prop: Ejemplo 1} and Figure~\ref{Fig-Varios-LADOS} for $\alpha=\frac \pi k$, $k\geq 2$.


\begin{figure}[h]\label{new-ends}
	\begin{center}
		\includegraphics[height=4cm]{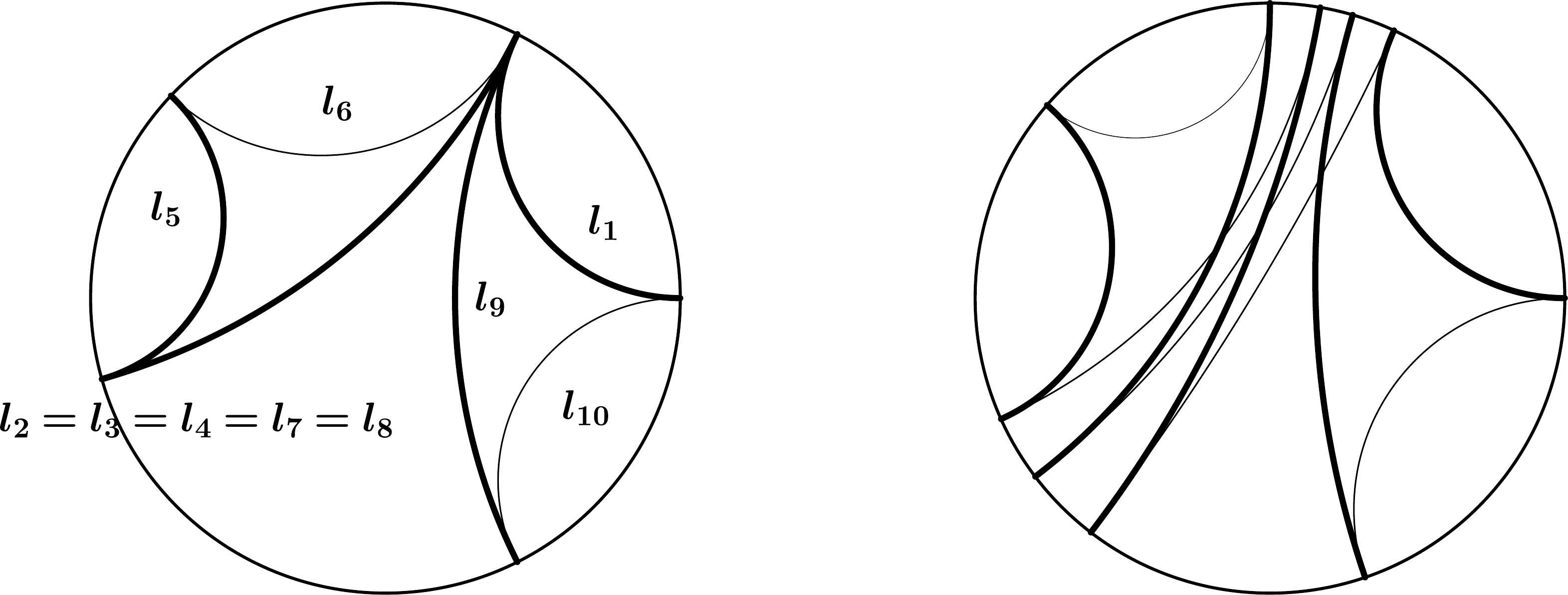}
        \end{center}
	\caption{Possible ends satisfying item {\it (2)}  in Lemma~\ref{l:1side} (left)  and a generic polygon approaching it (right).}
	\end{figure}

In the asymptotic boundary of the twisted-Scherk example (see Section~\ref{sec:pre}) with $k=1$ and second ideal vertex $(0,1)$, non-consecutive geodesics coincide. This surface can be thought as a limit of other twisted-Scherk examples corresponding to the choice of second ideal vertex $e^{\theta_n\pi i}$, with $\theta_n<\frac\pi 2$ converging to $\frac\pi 2$. We are going to prove that this is not a special case: With the exception of the asymptotic boundary of a vertical plane (item {\it (1)} in Lemma~\ref{l:1side}), every asymptotic boundary of a properly embedded minimal surface of $\h^2\times\r$ with finite total curvature, can be obtained as a limit of generic polygons at infinity defined as follows:

\begin{defi}\label{d:generic}
We say that a polygon at infinity is \emph{generic} if it is admissible, has more than two horizontal geodesics, and exactly two geodesics of $\pi(\mathcal{P})$ meet at each of its ideal vertices.
\end{defi}

\begin{prop}\label{prop:limits-polygon}
    Let  $\mathcal P$ be an admissible polygon at infinity containing more than two horizontal geodesics (i.e. it is not the asymptotic boundary of a vertical plane). Then there exists a sequence of generic polygons at infinity $\{\mathcal P_n\}_n$ with the same number of horizontal geodesic as $\mathcal P$, that converges to~$\mathcal P$. Moreover, if $\mathcal{P}$ is the asymptotic boundary of a non-planar embedded minimal end in $\h^2\times\r$, then the generic polygons at infinity $\mathcal P_n$ can be chosen to be embedded. 
\end{prop}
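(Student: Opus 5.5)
The plan is to perturb the ideal vertices of $\pi(\mathcal P)$ on $\partial_\infty\h^2$, splitting every vertex where three or more geodesics of $\pi(\mathcal P)$ meet into several nearby distinct vertices. Write $\mathcal P$ through its cyclic list of ideal points $p_1,\dots,p_{2k}$ (with $p_{2k+1}=p_1$), so that $l_i$ joins $p_i$ and $p_{i+1}$. Since $\mathcal P$ is admissible with $k\geq 2$, consecutive geodesics are distinct and $p_i\neq p_{i+1}$. At a single ideal point $q$ the polygon may visit several times, namely at indices $i_1,\dots,i_s$ with $p_{i_r}=q$. Exactly two geodesics meet at $q$ precisely when $s=1$. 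I therefore define $\mathcal P_n$ by the points $p_i^n=e^{i(\theta_i+\delta_i/n)}$, where $p_i=e^{i\theta_i}$ and the $\delta_i$ are chosen pairwise distinct among indices sharing the same $\theta_i$. For $n$ large all the $p_i^n$ are distinct, consecutive ones never coincide, and the horizontal geodesics are still alternately placed at $\pm\infty$. So $\mathcal P_n$ is admissible and generic, has $2k$ horizontal geodesics, and converges to $\mathcal P$ in $\overline{\h^2\times\R}$. This proves the first part, with any choice of the $\delta_i$.

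For the embedded part, I will choose the order of the perturbed points $p_{i_1}^n,\dots,p_{i_s}^n$ near each multiple vertex $q$ according to the order of the horizontal sheets given by Remark~\ref{rem:order}. Near $q$, each visit $i_r$ gives a horizontal sheet $E_{i_r}$ with $\partial_\infty E_{i_r}$ made of $\bar l_{i_r-1}$, $\{q\}\times\R$ and $\bar l_{i_r}$. By embeddedness these sheets are pairwise disjoint and totally ordered ``below/above'' when going around a small circle about $q$ in the positive orientation. I would place the split points $p_{i_r}^n$ along $\partial_\infty\h^2$ in the same order, with $\delta_{i_r}$ increasing with the position of $E_{i_r}$. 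Lemma~\ref{l2} says the pairs $(l_{i_r-1},l_{i_r})$ are then either nested (case (1), with the parity condition) or unlinked (case (2)) along a transversal geodesic $\gamma$. These are exactly the configurations in which separating the common endpoint in the chosen order produces geodesics $l^n_{i_r-1},l^n_{i_r}$ whose local pieces near $q$ do not cross. I would check this explicitly in the upper half-plane model with $q=\infty$, where the geodesics near $q$ become vertical half-lines, and verify the nesting by an elementary case analysis.

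It remains to see that $\mathcal P_n$ has no transverse self-intersections at all. Away from the vertices, horizontal geodesics at the same height do not cross in $\mathcal P$ by Lemma~\ref{l1}. Transverse crossings are open conditions, but so are their absences only when the endpoints are distinct. So the only new crossings can arise between geodesics of the same parity sharing an endpoint, or coinciding, in $\mathcal P$. The coinciding case is governed by Lemma~\ref{l:1side}(2): a run $l_i=\dots=l_j$ must be separated into parallel, disjoint geodesics, ordered consistently with the sheet order used in the proof of that lemma. Finally, I would build the bijection $\SS^1\to\mathcal P_n$ from the fact that the vertical lines $\{p_i^n\}\times\R$ are now distinct.

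The main obstacle is making the local orderings at different multiple vertices, and along runs of coinciding geodesics, globally compatible. One geodesic $l_i$ has two endpoints, and the perturbations at both ends must not force it to cross a same-height geodesic in between. I would try to handle this by replacing each geodesic with nearby disjoint ``parallel'' geodesics shifted toward the side dictated by the vertical sheets $M_i$. That side is well defined because the sheets are disjoint horizontal graphs over the vertical planes $l_i\times\R$ (item (5) of Theorem~\ref{th:ctf}). I would then argue that shifting each sheet slightly in the direction of its order realizes all the local orders at once.
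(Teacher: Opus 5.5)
Your proposal is correct and is essentially the paper's argument: the paper also splits each multiple vertex according to the above/below order of the horizontal sheets from Remark~\ref{rem:order}, but it does so one sheet at a time (moving the vertex of the topmost sheet to $e^{i\delta_n}$, $\delta_n>0$, and iterating) rather than all at once, and it leaves implicit the compatibility issue you raise. That issue is harmless: whether two same-height perturbed geodesics cross depends only on the local orders at their shared endpoints, and at both ends of a geodesic these orders are induced by the side on which the corresponding vertical sheets lie (as in the proof of Lemma~\ref{l:1side}), so your sheet-order rule already yields ``parallel'' shifts (a minor point: case (1) of Lemma~\ref{l2} is interleaved, not nested).
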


\begin{proof}
If $\mathcal P$ is generic, we take $\mathcal P_n=\mathcal P$ for any $n$. So let us assume that $\mathcal P$ is not generic. 

We use the notation above. It is clear that if $l_p,l_{p+1}$ share their common endpoint, that can be assumed to be $(1,0)$,  with $l_j,l_{j+1}$ then we can change $l_j,l_{j+1}$ slightly by moving their common endpoint to $e^{i\delta_n}$ for $\delta_n>0$ converging to zero and $\delta_n$ small enough so that there is no vertex of ${\mathcal P}$ between $(1,0)$ and $e^{i\delta_n}$. We can follow a similar argument for any possible common vertex to construct the desired polygons ${\mathcal P}_n$.

Let us now assume that $\mathcal{P}$ is the asymptotic boundary of a non-planar embedded minimal end (and non-generic). Now we must be more careful in constructing the generic polygons at infinity so that they are embedded.

Since $\mathcal{P}$ is not generic, there exist more than two geodesics of $\pi(\mathcal{P})$ arriving to the same vertex in $\partial_\infty\h^2$, say $(1,0)$. There must be an even number (at least four) of them, and they can be grouped in pairs so that each pair defines a horizontal sheet with $\{(1,0)\}\times\r$ in its asymptotic boundary. Since $M$ is embedded, these horizontal sheets are ordered (see Remark~\ref{rem:order}). If $l_j,l_{j+1}$ are the geodesics defining the horizontal sheet ``above" the others, we change $l_j,l_{j+1}$ so that their new common endpoint is $e^{i\delta_n}$ for some $\delta_n\to 0$ as above. 
If the new polygon at infinity is generic, we are done. Otherwise, we repeat the same argument with the geodesics arriving to the same vertex. Since there are a finite number of geodesics in $\pi(\mathcal{P})$, after a finite number of steps we obtain the desired  generic embedded polygons at infinity.
\end{proof}

\begin{cor}\label{cor2}
  Let $M\subset \h^2\times \r$ be an embedded minimal end with finite
  total curvature and suppose that it only has four associated
  geodesics $l_1, l_2, l_3, l_4$ (i.e. $m=1$). Then the four geodesics
  form a quadrilateral, and the end is Scherk-type.
\end{cor}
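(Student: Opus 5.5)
We will show that the four projected geodesics $l_1,l_2,l_3,l_4$ have four distinct ideal endpoints, in cyclic order along $\partial_\infty\h^2$, and that $\pi$ is injective on $\mathcal P$. Then $\pi(\mathcal P)$ bounds an ideal quadrilateral, which is automatically convex, and the end is Scherk-type.

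Recall the setup: $l_1,l_3$ come from $\h^2\times\{+\infty\}$ and $l_2,l_4$ from $\h^2\times\{-\infty\}$. The vertices are $p_1,\dots,p_4$, with $l_i$ joining $p_i$ to $p_{i+1}$ cyclically. Since $M$ is embedded and the end is non-planar, Lemma~\ref{l1} gives that $l_1$ and $l_3$ do not cross transversely, and neither do $l_2$ and $l_4$.

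\emph{Step 1: no two consecutive geodesics coincide.}
Suppose some consecutive geodesics coincide. Since $k=2>1$, Lemma~\ref{l:1side}(2) gives indices with $l_{i-1}\neq l_i=\dots=l_j\neq l_{j+1}$, with $l_{i-1}$ and $l_{j+1}$ on opposite sides of $l_i$. There are only four geodesics, and the coinciding block has at least two of them.

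If the block has exactly two, then $l_{i-1}$ and $l_{j+1}$ are the two remaining geodesics. They are consecutive, so they share a vertex. Lying on opposite sides of $l_i$, their common vertex must be an endpoint of $l_i$. Following the cyclic order of vertices, this forces one of them to degenerate or to cross $l_i$ in a way forbidden by the parity condition of Lemma~\ref{l1}.

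If the block has three geodesics, then $l_{i-1}=l_{j+1}$ is the single remaining geodesic. It cannot lie on both sides of $l_i$, which is a contradiction.

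\emph{Step 2: no coincidences of vertices.}
Next we check that $p_1,\dots,p_4$ are pairwise distinct. Consecutive vertices are distinct because each $l_i$ is a geodesic. The two possible coincidences are $p_1=p_3$ and $p_2=p_4$. If $p_1=p_3$, then four geodesic pieces arrive at that vertex, namely $l_4,l_1$ and $l_2,l_3$, and they define two distinct horizontal sheets. Lemma~\ref{l2} then prescribes the order in which they cross a geodesic $\gamma$ near that vertex. Combining this order with the fact that the polygon closes up through $p_2$ and $p_4$, we expect to get either a transverse crossing of $l_1$ with $l_3$ (or of $l_2$ with $l_4$), contradicting Lemma~\ref{l1}, or else that $l_1=l_2$ or a similar coincidence, contradicting Step 1. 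The case $p_2=p_4$ is symmetric.

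\emph{Step 3: cyclic order.}
With four distinct vertices, the only remaining issue is whether $p_1,p_2,p_3,p_4$ appear in this cyclic order on the circle. If the order were different, say $p_1,p_3,p_2,p_4$, then either $l_1$ and $l_3$ would cross or $l_2$ and $l_4$ would cross. One checks this directly: two chords of the circle with four distinct endpoints cross exactly when their endpoints alternate. Both crossings are ruled out by Lemma~\ref{l1}. Hence the $l_i$ bound a convex ideal quadrilateral, $\pi$ is injective on $\mathcal P$, and by Definition~\ref{def:ends} the end is Scherk-type. It has multiplicity one because the polygon is embedded.

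The main obstacle is Step 2, the case $p_1=p_3$. There the contradiction has to come from the horizontal-sheet ordering of Lemma~\ref{l2} together with Remark~\ref{rem:order}, and carrying this out requires a careful case analysis on the signs $\varepsilon_i$.
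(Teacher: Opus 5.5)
Your route differs from the paper's and works, but Step~2 is not proved as written: you only say what you ``expect'' to get from Lemma~\ref{l2}. You also misjudge it as the main obstacle. It is immediate. An ideal geodesic of $\h^2$ is determined by its two endpoints. So if $p_1=p_3$, then $l_1$ (joining $p_1,p_2$) and $l_2$ (joining $p_2,p_3=p_1$) coincide, and likewise $l_3=l_4$. That is exactly what Step~1 excludes, so no case analysis on the $\varepsilon_i$ is needed; the sheet ordering of Lemma~\ref{l2} is already built into Lemma~\ref{l:1side}. The case $p_2=p_4$ is the same.

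The same observation should replace the vague sentence in the two-block case of Step~1 (``forces one of them to degenerate or to cross $l_i$''). If $l_i=l_{i+1}$, then $p_{i+2}=p_i$. So $l_{i+2}$ and $l_{i+3}$ both join $p_i$ and $p_{i+3}$ and coincide, i.e.\ $l_{j+1}=l_{i-1}$, which cannot lie in both components of $\h^2\setminus l_i$. With four geodesics, Steps~1 and~2 are really one configuration: two geodesics with a common endpoint, each traversed back and forth. Lemma~\ref{l:1side} rules it out. Step~3 is correct as written.

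The paper proceeds differently. It uses Proposition~\ref{prop:limits-polygon} to approximate $\mathcal P$ by embedded generic polygons $\mathcal P_n$ with four horizontal geodesics. Genericity makes the four vertices of $\mathcal P_n$ distinct, so only your Step~3 is needed for them. The quadrilateral structure then passes to the limit: the limit vertices are weakly cyclically ordered, and strictly so because consecutive vertices are endpoints of a non-degenerate geodesic. The paper thus never handles coincident vertices directly; that work is hidden in constructing the embedded approximating polygons. Your route handles the coincidences head-on via Lemma~\ref{l:1side} and needs no approximation or limit. With Step~2 replaced by the one-line observation above, your proof is complete and somewhat more self-contained.
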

  \begin{proof}
 We call $\mathcal{P}$ the asymptotic boundary of $M$. By Proposition~\ref{prop:limits-polygon}, there is a sequence of embedded generic polygons at infinity $\mathcal{P}_n$, any of them with four horizontal geodesics, converging to $\mathcal{P}$.   By
    Lemma~\ref{l1}, the four horizontal geodesics of $\mathcal{P}_n$ form a quadrilateral, thus the same happens for $\mathcal{P}$. 
  \end{proof}

\section{Minimal surfaces with low total  curvature}\label{sec:low-curvature}

\subsection{Minimal surfaces with total curvature $-4\pi$ in
  $\h^2\times \r$}\label{sec:-4pi}

We will first prove that a complete minimal surface with total
curvature $-4\pi$ cannot have positive genus.

\begin{prop}\label{prop:genus}
  Let $M\subset \h^2\times\r$ be a complete minimal surface with total
  curvature $-4\pi$. Then $M$ has genus zero.
\end{prop}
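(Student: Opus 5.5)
We start from the Jorge--Meeks type formula~\eqref{eq} in Theorem~\ref{th:ctf}. If $\int_M K=-4\pi$, then
\[
2-2g-2r-\sum_{i=1}^r m_i=-2,\qquad\text{that is,}\qquad 2g+2r+\sum_{i=1}^r m_i=4 .
\]
Here $r\ge 1$, because $M$ is complete and minimal and hence cannot be compact. The only solution with $g\ge1$ is $g=1$, $r=1$ and $m_1=0$. So the whole proof consists of ruling out this single configuration: a complete minimal surface of genus one with exactly one end, and that end planar.

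To exclude it, we use the asymptotic structure of the end. Since $m_1=0$, the end $E$ is asymptotic to an admissible polygon made of one geodesic $\bar l_1\subset\h^2\times\{+\infty\}$ and one geodesic $\bar l_2\subset\h^2\times\{-\infty\}$, joined by two vertical lines. The two vertical lines are the same on both sides, so $l_1$ and $l_2$ share both endpoints and therefore $l_1=l_2=:\alpha$. Thus $\partial_\infty M=\partial_\infty(\alpha\times\R)$. By item~(5) of Theorem~\ref{th:ctf}, outside a compact set the end is a horizontal graph over $\alpha\times\R$ that decays to zero; in particular the end is embedded (Remark~\ref{rem:zero}).

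Next we apply the maximum principle with vertical planes, exactly as in the alternative proof of the Hauswirth--Sa Earp--Toubiana classification in Section~\ref{sec:pre}. Take vertical planes $\beta\times\R$ with $\beta$ a geodesic disjoint from $\alpha$, whose asymptotic boundary is disjoint from $\partial_\infty M$. Since $M$ is proper (Theorem~\ref{th:ctf}), such a plane placed far away does not meet $M$. Move these planes toward $\alpha$ through a foliation by geodesics equidistant-orthogonal to a common perpendicular, i.e.\ the family of geodesics orthogonal to a fixed geodesic $\sigma$ that crosses $\alpha$ orthogonally. The end converges to $\alpha\times\R$, so no first contact can happen at infinity before the plane reaches $\alpha\times\R$, and an interior first contact would contradict the maximum principle. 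Doing this from both sides gives $M\subset\alpha\times\R$, hence $M=\alpha\times\R$. This is a plane, which has genus zero, contradicting $g=1$.

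The main obstacle is making this sweeping rigorous at infinity. The contact must not occur asymptotically; for instance, a plane tangent to $\alpha$ at an ideal endpoint could approach the sheets near the vertical lines. I would handle this by sweeping with planes $\beta_s\times\R$ orthogonal to $\sigma$. These have ideal endpoints disjoint from those of $\alpha$ for every $s$ before reaching $\alpha$, and item~(5) of Theorem~\ref{th:ctf} gives uniform convergence of $M$ to $\alpha\times\R$ outside a compact set. Hence $M\cap(\beta_s\times\R)$ can only occur in a compact region, and the standard interior maximum principle applies. This is the same mechanism as in the proof of Proposition~\ref{th:Alexandrov1}.
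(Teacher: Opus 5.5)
Your proposal is correct and follows the paper's proof: the formula~\eqref{eq} forces $g=1$, $r=1$, $m_1=0$, and the maximum principle with vertical planes then shows that $M$ is a vertical plane, a contradiction. Sweeping with the ultraparallel planes orthogonal to $\sigma$ is a sound way to supply the detail the paper leaves implicit. Each such plane has asymptotic boundary disjoint from $\partial_\infty M$, so any first contact must be interior.
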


\begin{proof}
  By~\eqref{eq} we know that $2g+2r+\sum\limits_{i=1}^{r}m_i=4$, where $g$ is the genus of $M$, $r$~the number of its ends, and $m_i$ is the number associated to each end related to the number of horizontal geodesics contained in its asymptotic boundary. Since
  $M$ cannot be compact we know that $r\geq 1$, and then $g\leq 1$.
  
  Suppose by contradiction $g=1$. Then it holds necessarily $r=1$ and
  $m_1=0$. This is, $M$ has only one end and it is asymptotic to a
  vertical plane. By the maximum principle using vertical planes we
  obtain that $M$ is a vertical plane, a contradiction.
\end{proof}

Let $M\subset \h^2\times\r$ be a complete minimal surface with total
curvature $-4\pi$. From equation~\eqref{eq} and the previous proposition we
obtain $2r+\sum\limits_{i=1}^{r}m_i=4$. Thus there are two
possibilities:
\begin{itemize}
  \item either $r=2$ and $m_1=m_2=0$, i.e. $M$ is an annulus with
    planar ends; or
    \item $r=1$ and $m_1=2$, i.e. $M$ is simply-connected and
      asymptotic to a polygon at infinity with three horizontal
      geodesics in $\h^2\times\{+\infty\}$ and three geodesics in
      $\h^2\times\{-\infty\}$.
\end{itemize}

In the first case, Remark~\ref{rem:zero} says that the planar ends are embedded, and we get that $M$ must be a horizontal catenoid by the  Schoen-type Theorem~\cite{hnst,HMR}.

\medskip

Therefore from now on we assume that $M$ is a minimal disk in $\h^2\times \r$ with total curvature $-4\pi$. In order to obtain a classification result, we restrict our study to the case where $M$ is embedded (then properly embedded). 

We use the notation introduced in Section~\ref{sec:emb}: $M$ is asymptotic to an admissible polygon at infinity
$\mathcal{P}$, 
$\bar l_1\cup\bar l_3\cup\bar l_5=\mathcal{P}\cap (\h^2\times\left\lbrace +\infty\right\rbrace)$, 
$\bar l_2\cup\bar l_4\cup\bar l_6=\mathcal{P}\cap (\h^2\times\left\lbrace -\infty\right\rbrace)$,
$l_i=\pi(\bar l_i)$ has endpoints $p_i,p_{i+1}\in \partial _\infty \h^2$ (using cyclical notation $p_0=p_6$ and $p_7=p_1$), and  $v_i=\{p_i\}\times\r$. 
We assume they are ordered following a fix orientation in the asymptotic polygon at infinity, thus
\[
\mathcal{P}=v_1\cup\bar l_1\cup v_2\cup\bar l_2\cup v_3\cup\bar l_3\cup v_4\cup\bar l_4\cup v_5\cup\bar l_5\cup v_6\cup\bar l_6 .
\]

Up to an isometry
we can assume that $p_1=(1,0)\equiv 1$. Let $\theta_j\in [0, 2\pi)$
satisfy $p_j=e^{i\theta_j}$ for any $j$. We observe that these
$\theta_j$ are not necessarily monotonically ordered (see Figure~\ref{Fig-CTF-EJEMPLOS}) and that two or three of them could coincide. We
consider the orientation in $\mathcal P$ so that
$\theta_2\leq \theta_6$. The following theorem describes the possible
asymptotic boundaries for $M$.

\begin{teo}\label{th:configuraciones}
  Let $M\subset \h^2\times \r$ be a complete embedded minimal surface
  with finite total curvature $-4\pi$ and one end. With the notation
  above and up to an isometry and a possible reordering, one of the following cases must hold:
  \begin{enumerate}
  \item $0=\theta_1<\theta_2<...<\theta_6<2\pi$ (the ideal points
    $p_i$ are cyclically ordered). In this case, $M$ must be a Scherk graph over an ideal geodesic hexagon.
  \item   $0=\theta_1<\theta_2<\theta_5<\theta_4<\theta_3<\theta_6<2\pi$.
  \item    $0=\theta_1<\theta_2<\theta_5<\theta_4<\theta_3=\theta_6<2\pi$.
  \item    $0=\theta_1<\theta_2=\theta_5<\theta_4<\theta_3=\theta_6<2\pi$.
  \end{enumerate}
\end{teo}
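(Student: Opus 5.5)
We first reduce to the generic case, then enumerate the cyclic orders the lemmas allow, and then degenerate.

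\emph{Reduction to generic polygons.} By Proposition~\ref{prop:limits-polygon}, since $M$ has one end which is non-planar ($m_1=2$), $\mathcal P$ is a limit of embedded generic polygons $\mathcal P_n$ with six horizontal geodesics. These come from the perturbation that moves the vertex of the ``upper'' horizontal sheet slightly, so the order constraints of Lemmas~\ref{l1} and~\ref{l2} are inherited. We therefore first classify the generic configurations, i.e.\ those with six distinct ideal points $p_1,\dots,p_6$, and recover $\mathcal P$ afterwards by letting some $\theta_j$ collide.

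\emph{Generic combinatorics.} Lemma~\ref{l1} says that $l_1,l_3,l_5$ are pairwise non-crossing, and likewise $l_2,l_4,l_6$. Normalize $\theta_1=0$ and $\theta_2\le\theta_6$. We enumerate the cyclic orderings of $p_1,\dots,p_6$ on $\partial_\infty\h^2$ up to rotation, reflection and the relabelings $l_i\mapsto l_{i+2}$ (shift by two, preserving parity) and the reversal of orientation. For each ordering we test whether each triple of same-parity chords is non-crossing. Both triangles of chords, $\{p_1p_2,p_3p_4,p_5p_6\}$ and $\{p_2p_3,p_4p_5,p_6p_1\}$, must be non-crossing matchings of the relevant points. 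This finite check should leave exactly two patterns. The first is the cyclic order, case (1). The second is $\theta_1<\theta_2<\theta_5<\theta_4<\theta_3<\theta_6$, case (2), in which $l_3$ and $l_4$ are nested inside. Other orders, such as $p_1,p_2,p_4,p_3,\dots$, produce a transverse crossing between two odd or two even chords. At this stage I would use the flux identity \eqref{eq:flux} ($\alpha(\mathcal P)=\beta(\mathcal P)$ for a disk) only if some ordering survived Lemma~\ref{l1} but ought to be excluded. Otherwise non-crossing alone should suffice, as in Corollary~\ref{cor2}.

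\emph{Degenerations.} Next we let vertices of case (2) collide, i.e.\ $\theta_3=\theta_6$ and/or $\theta_2=\theta_5$, and likewise for case (1) (excluding total collapses where consecutive geodesics coincide). For each coincidence we apply Lemma~\ref{l2} and Remark~\ref{rem:order}: the two horizontal sheets at a common vertex must be nested or ordered side by side, with the parity condition in option (1). We also apply Lemma~\ref{l:1side} whenever consecutive geodesics coincide, which forces them to have neighbours on opposite sides. This should rule out the degenerations of the cyclic hexagon, since a collision there would force interlaced sheets of the wrong parity. It should also leave exactly (3) and (4), where the degenerations of (2) at $p_3=p_6$ and $p_2=p_5$ are compatible with Lemma~\ref{l2}(2). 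Other identifications, for example $\theta_4=\theta_1$, should violate Lemma~\ref{l2} or Lemma~\ref{l:1side}.

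\emph{Case (1) is a Scherk graph.} When the $p_i$ are cyclically ordered, the end is Scherk-type. Remark~\ref{rem:scherk}, with $p=0$ and $q=1$ via the Alexandrov argument of Proposition~\ref{th:Alexandrov1}, then shows that $M$ is a vertical graph, hence a Scherk graph over the hexagon.

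The main obstacle I expect is the bookkeeping in the degenerate step. One must check each coincidence pattern against the ordering of horizontal sheets, including the subtle non-strict orders in Remark~\ref{rem:order}, without missing a configuration or wrongly discarding one.
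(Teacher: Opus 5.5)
Your proposal is correct and follows the paper's proof: approximate $\mathcal P$ by embedded generic polygons (Proposition~\ref{prop:limits-polygon}), use non-crossing of same-parity geodesics to get either the cyclic hexagon or the one-crossing pattern (2), pass to the limit to obtain (2), (3) or (4), and use Remark~\ref{rem:scherk} for case (1). The degeneration step is simpler than you expect and needs neither Lemma~\ref{l2} nor Lemma~\ref{l:1side}: the weak cyclic order survives the limit and no $l_i$ can collapse, so only circularly adjacent points that are not the two endpoints of a common $l_i$ may merge; this excludes every degeneration of (1) and leaves exactly $\theta_2=\theta_5$ and/or $\theta_3=\theta_6$ in (2) (and the sheets at $p_3=p_6$ are in fact in the interlaced position (1) of Lemma~\ref{l2}, not (2)).
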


\begin{figure}[htb]
	\begin{center}
		\includegraphics[height=10cm]{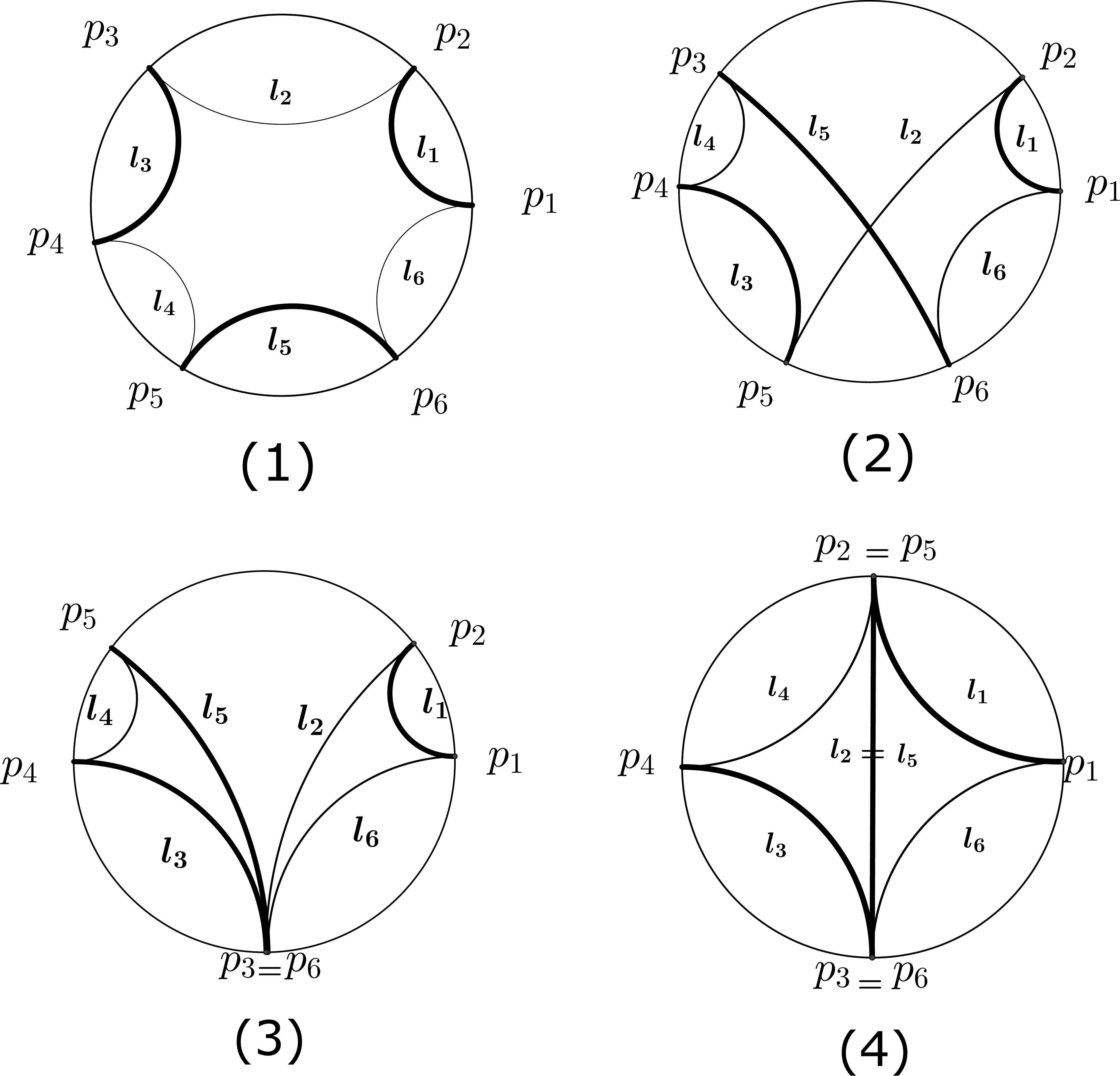}
	\end{center}
	\caption{Projections of the possibles configurations of the
          admissible polygon at infinity in
          Theorem~\ref{th:configuraciones}.}
	\label{Fig-configuraciones}
\end{figure}
 
Complete Scherk minimal graphs correspond to the first case. For the
second and fourth case we have twisted-Scherk Examples. No example for
the third case is known yet (if it exists).

\begin{proof}
By Proposition~\ref{prop:limits-polygon} there exists an embedded generic polygon at infinity $\mathcal P_n$ arbitrarily close to $\mathcal P$. Let us analyze the possibilities for $\mathcal P_n$. By Lemma~\ref{l1},
either no geodesics in $\pi(\mathcal P_n)$ intersect and case \it{(1)} follows; or two geodesics with different parity intersect, and we get \it{(2)}.

The possible limit of polygons at infinity in case \it{(1)} lies again in case \it{(1)}. The possible limit for $\mathcal P_n$ in case \it{(2)} lies in cases \it{(2)}, \it{(3)} or \it{(4)}. 

Finally, we observe that, by Remark~\ref{rem:scherk}, the only possible surfaces corresponding to case $(1)$ are Scherk graphs. This proves the theorem.
\end{proof}

Let us describe the possible configurations using two parameters $\alpha,\beta\in(0,\frac\pi 2)$. Up to an isometry, we can fix $p_1=(1,0)$, $p_4=(-1,0)$, and $p_6=(0,-1)$; i.e. $\theta_1=0$, $\theta_4=\pi$, and $\theta_6=-\frac\pi 2$. 
Then $\theta_2\in(0,\frac{\pi}{2})$ is our first choice and $\theta_3\in(-\pi,-\frac\pi 2)$ our second one. (We can always assume $\theta_2\leq -\theta_6$ up to an isometry.)  Now $\theta_5$ is determined by the flux condition~\eqref{eq:flux}. Geometrically, we place a horodisk $H_6$ at $p_6$ and a horodisk $H_j$ at $p_j$ tangent to $H_{j-1}$ for any $1\leq j\leq 4$, see Figure~\ref{Fig-parametros}. 
Finally, we consider the horodisk $H_5$ tangent to both $H_4$ and $H_6$ that determines $p_5=H_5\cap\partial_\infty\h^2$.

\begin{figure}[htb]
	\begin{center}
		\includegraphics[height=6cm]{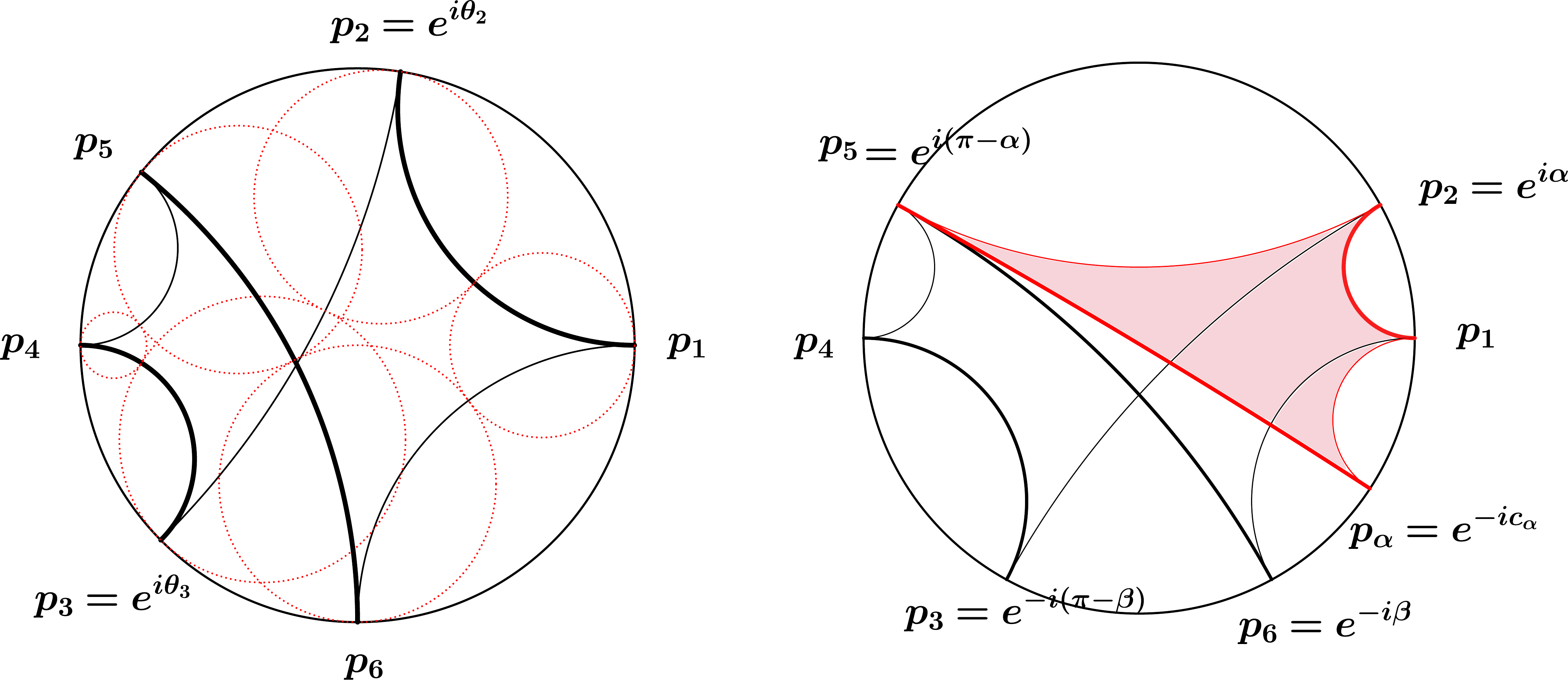}
	\end{center}
	\caption{A picture describing the parameters  of the end of a simply-connected minimal surfaces with total  curvature $-4\pi$. Left: flux condition using the horocycles to obtain the two parameters.   Right: the projection of Scherk surface (in red) acting as a barrier for the surface.}
	\label{Fig-parametros}
\end{figure}

Now let us consider a horizontal dilation along the geodesic $\{y=0\}$ (i.e. we fix $p_1,p_4$ but change $p_6$) so that $p_3$ and $p_6$ are symmetric with respect to $\{x=0\}$. Then $p_2$ and $p_5$ are also symmetric with respect to $\{x=0\}$. Therefore,
\begin{equation}\label{eq:pi}
p_1=(1,0),\ p_2=e^{i\alpha},\ p_3=e^{-i(\pi-\beta)},\ p_4=(-1,0),\ p_5=e^{i(\pi-\alpha)},\ p_6=e^{-i\beta},
\end{equation}
for some $0<\alpha\leq\beta\leq\frac\pi 2$.

There is a further restriction for $\beta$ depending on $\alpha$. Let $c_\alpha\in(0,\frac{3\pi}{2})$ be defined so that the ideal points
\[
p_1=(1,0),\ p_2=e^{i\alpha},\ p_5=e^{i(\pi-\alpha)}, p_\alpha=e^{-ic_\alpha}
\]
are the vertices of the ideal quadrilateral over  the Scherk graph is defined. We can compute $c_\alpha$ in the following constructive way: We identify $\mathbb H^2$ with $B(0,1)\subset \mathbb C$. Let $q(p_1,p_5)$ be the center of the circumference in $\mathbb C$ containing the hyperbolic geodesic $\overline{p_1p_5}$; it can be computed as the intersection between the tangent straight lines to $\partial B(0,1)$ at $p_1$ and $p_5$. As the geodesics $\overline{p_1p_5}$ and $\overline{p_2p_\alpha}$ are orthogonal; thus, the point $p_\alpha$ can be computed as the intersection of $\partial B(0,1)$ with the straight line passing by $q(p_1,p_5)$ and $p_2$. Easy computations arise  $\cos(c_\alpha)=\frac{\cos (\alpha ) (5 \cos (\alpha )-3)}{-3 \cos (\alpha )+2 \cos (2 \alpha )+3}$, see Figure~\ref{Fig-construction}. (We observe that $- 3\cos(\alpha)+ 2\cos(2\alpha) + 3 = 4\cos^2(\alpha) - 3\cos(\alpha) + 1 > 0$ .)
\begin{figure}[htb]
	\begin{center}
		\includegraphics[height=6cm]{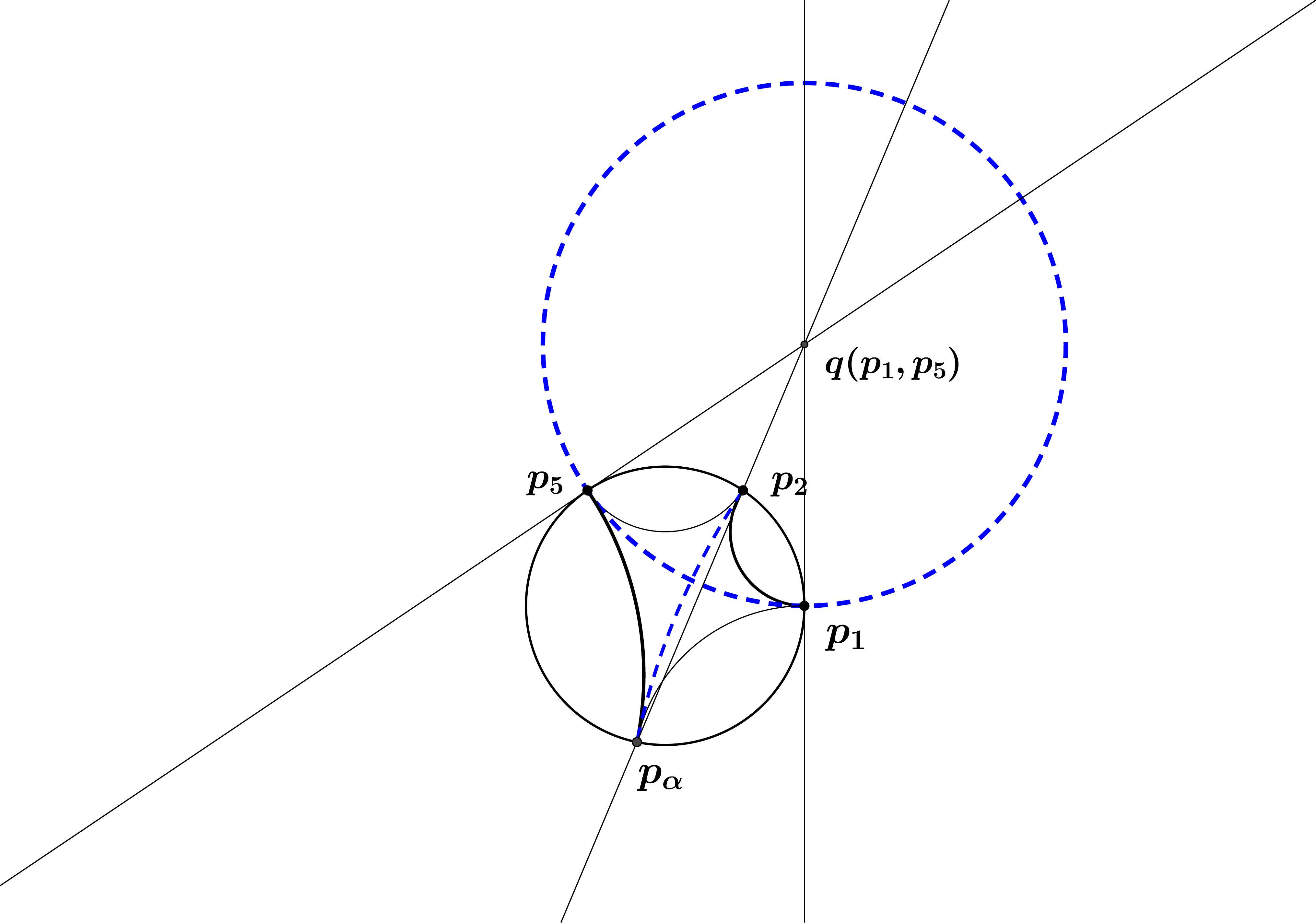}
	\end{center}
	\caption{The constructive construction of the regular quadrilateral given the points $p_1$, $p_2$ and $p_5$.}
	\label{Fig-construction}
\end{figure}

\begin{lem}
If the ideal points in~\eqref{eq:pi} determine a polygon at infinity which is the asymptotic boundary of a minimal surface with finite curvature $-4\pi$, then $\beta\leq c_\alpha$. 
\end{lem}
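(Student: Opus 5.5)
The plan is to use the Scherk graph over the ideal quadrilateral with vertices $p_1,p_2,p_5,p_\alpha$ as a barrier, as suggested in Figure~\ref{Fig-parametros} (right). Argue by contradiction: suppose $\beta>c_\alpha$, i.e. $p_6=e^{-i\beta}$ and $p_3=e^{-i(\pi-\beta)}$ lie so that the quadrilateral $Q=p_1p_2p_5p_\alpha$ leaves the relevant part of $\pi(\mathcal P)$ on one side. By Theorem~\ref{th:configuraciones}, with the normalization~\eqref{eq:pi}, the geodesics $l_1=\overline{p_1p_2}$ and $l_4=\overline{p_4p_5}$ of $\pi(\mathcal P)$ are related to $Q$ as follows: $l_1$ is an edge of $Q$, and $l_5=\overline{p_5p_6}$ and $l_6=\overline{p_6p_1}$ cut into the region near $p_\alpha$. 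Let $S$ be the Scherk graph over $Q$, with values $+\infty$ on $\overline{p_1p_2}$ and $\overline{p_5p_\alpha}$ and $-\infty$ on $\overline{p_2p_5}$ and $\overline{p_\alpha p_1}$, matching the parities of the edges of $\mathcal P$ ending at $p_1,p_2$ ($\bar l_1$ at $+\infty$, $\bar l_6$ at $-\infty$).

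First I will compare asymptotic boundaries. If $\beta>c_\alpha$, then $p_6$ lies strictly inside the arc between $p_\alpha$ and $p_1$ not containing $p_2,p_5$, and I will check that the pieces of $\partial_\infty M$ near the ideal boundary of $Q\times\R$ lie on the correct side of $\partial_\infty S$. Next, translate $S$ vertically by a large amount, or slide it horizontally along the geodesic orthogonal to $\overline{p_2p_5}$ far away, so that it is disjoint from $M$. Since $M$ is proper and its sheets are horizontal graphs asymptotic to vertical planes (item~(5) of Theorem~\ref{th:ctf}), initial disjointness can be arranged outside compact sets. Then move $S$ back continuously. By the maximum principle, including the maximum principle at infinity using the asymptotic description of the vertical and horizontal sheets, there is no first contact point. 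So in the limit position $M$ lies on one side of $S$.

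To obtain the contradiction I will use the vertical flux. The limiting configuration forces the arc $\overline{p_6p_1}$, or the vertical sheet over $l_6$, to lie on one side of $S$ while sharing the ideal vertex $p_1$ with $\overline{p_\alpha p_1}$. A boundary maximum principle at the common vertical line $v_1$ then gives a contradiction unless $p_6=p_\alpha$. Alternatively, I would compare the flux identity~\eqref{eq:flux} via horodisk lengths: the horodisk construction defining $p_5$ together with the Jenkins–Serrin equality for $Q$ yields a monotonicity in $\beta$, and this should contradict $\beta>c_\alpha$.

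The main obstacle is this last step: making the maximum principle at infinity rigorous when $S$ and $M$ share the ideal vertices $p_1,p_2,p_5$ and their vertical lines. There the two surfaces are asymptotic to each other, and an interior touching point may fail to exist. I would handle it by first shrinking $Q$ slightly, replacing it with Scherk graphs over nearby quadrilaterals whose vertices are perturbed off $p_1,p_2,p_5$ into the exterior arcs while still satisfying the Jenkins–Serrin condition. For these, contact must occur at interior points, and I would pass to the limit.
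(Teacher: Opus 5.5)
There is a genuine gap. Your barrier, a Scherk graph over a small perturbation of the quadrilateral $Q$ with vertices $p_1,p_2,p_5,p_\alpha$, is the right one, and perturbing the vertices off $p_1,p_2,p_5$ is exactly what the paper does. The problems are the geometry and the endgame.

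First, the geometry is reversed. Since $0<c_\alpha<\beta\le\frac\pi2$, going clockwise from $p_1=1$ one meets $p_\alpha=e^{-ic_\alpha}$ before $p_6=e^{-i\beta}$. So it is $p_\alpha$ that lies on the arc cut off by $l_6=\overline{p_6p_1}$, not $p_6$ on the arc between $p_\alpha$ and $p_1$. What you describe is the configuration $\beta<c_\alpha$ that the lemma permits: there the $-\infty$ edge $\overline{p_\alpha p_1}$ cuts through the convex hull $H$ of $\pi(\mathcal P)$, which contains $\pi(M)$, so no contradiction can come out of it. In the correct picture:
\begin{itemize}
\item the $-\infty$ edges $\overline{p_2p_5}$ and $\overline{p_\alpha p_1}$ lie on or outside $\partial H$;
\item $l_3,l_4,l_5$ lie beyond the $+\infty$ edge $\overline{p_5p_\alpha}$;
\item the only geodesics of $\pi(\mathcal P)$ entering the interior of $Q$ are $l_2$ and $l_6$, both at $-\infty$.
\end{itemize}

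Second, and more seriously, you have no valid final contradiction. You stop at the original position of $S$ and appeal to a boundary maximum principle at $v_1$ forcing $p_6=p_\alpha$. Nothing supports this: one surface can lie on one side of another even when their $-\infty$ geodesics ending at $p_1$ differ. The flux route cannot work at all. The normalization \eqref{eq:pi} is invariant under reflection in $\{x=0\}$, which exchanges $l_1\leftrightarrow l_4$, $l_2\leftrightarrow l_5$, $l_3\leftrightarrow l_6$. Hence \eqref{eq:flux} holds for every pair $(\alpha,\beta)$ and says nothing about $\beta$ versus $c_\alpha$.

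The missing idea is that the vertical translation never has to stop. The paper takes the Scherk graph $S'$ over the quadrilateral $Q'$ with vertices $e^{-i\delta}$, $e^{i(\alpha+\delta)}$, $e^{i(\pi-\alpha-\delta)}$, $e^{-i(c_\alpha+\delta')}$, with $0<\delta'<\beta-c_\alpha$. Then $p_1,p_2$ lie beyond one $+\infty$ edge of $Q'$ and $p_3,\dots,p_6$ beyond the other. Consequently $M$ meets $Q'\times\R$ only over $Q'\cap H$, which has compact closure in $\h^2$. Over that set $M$ is bounded above, while $S'$ is bounded below and tends to $+\infty$ at the edges of $Q'$ that meet $H$. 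The argument then runs as follows:
\begin{itemize}
\item $S'+T$ is disjoint from $M$ for $T$ large.
\item As $T$ decreases, no contact can appear at infinity.
\item The maximum principle rules out a first interior contact.
\item Hence $S'+T$ stays above $M\cap(Q'\times\R)$ for every $T\in\R$.
\end{itemize}
Letting $T\to-\infty$ contradicts the fact that $M$ has points over $Q'$, for instance on its vertical sheet over $l_2$, which crosses $Q'$.
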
\label{claim-parametros}
\begin{proof}
Suppose by contradiction that there exists one such minimal surface $M$ and $\beta-c_\alpha>0$.
Then there exists $\delta,\delta'\in(0,\beta-c_\alpha)$ such that there exists a Scherk graph $S$ over the quadrilateral with ideal vertices 
\[
e^{-i\delta},e^{i\alpha+\delta},\ p_5=e^{i(\pi-\alpha-\delta)}, e^{-i(c_\alpha+\delta')} .
\]
We can translate $S$ upwards so that it does not intersect $M$. Then we start translating downwards $S$. By the Maximum Principle, these translated graphs can never intersect $M$, and we reach a contradiction. Therefore, we get $\beta\leq c_\alpha$. \end{proof}

\begin{remark}
  Note that known examples exist only when $\alpha = \beta$, namely the Twisted Scherk surfaces for $k = 1$ constructed in~\cite{PR}. Whether these are unique or if one can construct further examples when $0 < \alpha < \beta < c_{\alpha}$ remains an intriguing open question. In particular, taking $\alpha$ (large enough) and $\beta = \frac{\pi}{2} < c_{\alpha}$ corresponds to Case~$(3)$ of Theorem~\ref{th:configuraciones}. Answering this question could complete the classification of embedded minimal surfaces with total curvature $-4\pi$.
\end{remark}

\subsection{Minimal surfaces with total curvature $-6\pi$ in
  $\h^2\times\R$.}
\label{sec:-6pi}

The aim of this section is to prove that an embedded minimal surface
with finite total curvature $-6\pi$ must be simply-connected and we describe its asymptotic behavior. Up to the date, the only known
examples were the corresponding Scherk graphs.

%
	


\begin{teo}\label{th:6pi}
  An embedded minimal surface with finite total curvature $-6\pi$ is
  simply-connected.
\end{teo}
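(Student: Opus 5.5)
We use formula~\eqref{eq} with total curvature $-6\pi$, which gives
\[
2g+2r+\sum_{i=1}^r m_i = 5 .
\]
Since $M$ is not compact, $r\geq 1$. The left-hand side minus $\sum m_i$ is even, so $\sum m_i$ is odd. The possibilities are:
\begin{itemize}
\item $g=0$, $r=1$, $m_1=3$. This is the simply-connected case we want.
\item $g=1$, $r=1$, $m_1=1$.
\item $g=0$, $r=2$, $\{m_1,m_2\}=\{0,1\}$.
\end{itemize}
So it suffices to rule out the last two cases using embeddedness.

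\emph{Genus one, one end with $m_1=1$.} The end is embedded, so by Corollary~\ref{cor2} its four horizontal geodesics bound an ideal quadrilateral and the end is Scherk-type. Remark~\ref{rem:scherk} (Proposition~\ref{th:Alexandrov1} with $p=0$, $q=1$) applies to a properly embedded surface with a single Scherk-type end. The Alexandrov reflection with horizontal planes then shows that $M$ is a vertical graph. A vertical graph over a simply connected domain is a disk, which contradicts $g=1$. Equivalently, any graph is simply connected.

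\emph{Genus zero, two ends, one planar and one with $m=1$.} The planar end is asymptotic to $\partial_\infty\Pi$ for a vertical plane $\Pi$. By Corollary~\ref{cor2}, the other end is Scherk-type with polygon $\mathcal P$ over an ideal quadrilateral $Q$. The plan is to check the cyclic-order hypothesis of Proposition~\ref{th:Alexandrov1} with $p=q=1$ and conclude nonexistence. Let $\Omega$ be the convex hull of $\pi(\Pi)\cup Q$. Because the ends are disjoint and embedded, Lemma~\ref{l1}-type arguments show that $\pi(\Pi)$ cannot cross transversally a horizontal geodesic of $\mathcal P$. Indeed, the vertical sheets of the $\mathcal P$-end are horizontal graphs over vertical planes, and $\Pi$'s end is a horizontal graph over $\Pi$ near infinity, so two transverse such graphs would intersect. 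Hence $\pi(\Pi)$ lies outside the interior of $Q$ (possibly sharing a side or vertex), or inside $Q$ separating it.

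\begin{itemize}
\item If $\pi(\Pi)$ is disjoint from the interior of $Q$, then $\pi(\Pi)\subset\partial\Omega$. Some side $\gamma$ of $Q$ faces $\pi(\Pi)$. If $\gamma$ lies at $-\infty$ we are in the proposition's situation. If it lies at $+\infty$, we use the symmetric version from the Remark after Proposition~\ref{th:Alexandrov1}. The other three sides of $Q$ then lie on $\partial\Omega$.
\item If $\pi(\Pi)$ crosses the interior of $Q$, it must join two opposite or adjacent ideal vertices without crossing sides transversally, so it is a diagonal of $Q$. In that case the maximum principle with vertical planes (comparing with the planar end and sliding $\Pi$) or the flux/half-space argument gives a contradiction. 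Alternatively, we reduce to the previous configuration by the barrier argument, using the vertical plane to confine $M$ to one side.
\end{itemize}

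The main obstacle is this last configuration analysis: ruling out that $\pi(\Pi)$ is a diagonal of $Q$, and handling degenerate shared vertices. Our first attempt would be to use embeddedness of horizontal sheets at the shared vertices (Lemma~\ref{l2}, Remark~\ref{rem:order}) to show that such a diagonal forces two sheets to cross. After that, Proposition~\ref{th:Alexandrov1} finishes the proof, leaving only the simply-connected case.
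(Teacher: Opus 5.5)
Your proposal follows the paper's proof. You use the same three cases from \eqref{eq}, exclude the genus-one case via Corollary~\ref{cor2} and Remark~\ref{rem:scherk}, and exclude the planar-plus-Scherk case via Proposition~\ref{th:Alexandrov1}. The paper applies that proposition without checking its cyclic-order hypothesis, so your extra check is added care rather than a different route. The Lemma~\ref{l1} argument does apply to vertical sheets of different ends. Hence $\pi(\Pi)$ either lies in one of the closed half-planes outside $Q$, or it is a diagonal of $Q$. In the first situation the proposition, or its symmetric version, applies exactly as you say; shared vertical lines are allowed by the remark after it. (A geodesic through two adjacent ideal vertices is a side, so ``adjacent'' cannot occur.)

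The diagonal case, which you leave open, is closed by the Lemma~\ref{l2} idea you name, and nothing more is needed.
\begin{itemize}
\item Let $p$ be an endpoint of the diagonal, and $\gamma$ a geodesic near $p$ as in Lemma~\ref{l2}.
\item The two sides of $Q$ at $p$ lie at opposite levels. So the trace on $\gamma\times\R$ of the Scherk end's horizontal sheet at $p$ runs from $(a,-\infty)$ to $(c,+\infty)$, where $a,c$ are the intersections of $\gamma$ with those sides.
\item The trace of the planar end's horizontal sheet at $p$ runs from $(b,-\infty)$ to $(b,+\infty)$, where $b=\gamma\cap\pi(\Pi)$.
\item Since the diagonal enters $Q$, $b$ lies strictly between $a$ and $c$. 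The two traces must therefore cross, which contradicts embeddedness; neither order (1) nor (2) of Lemma~\ref{l2} is possible.
\end{itemize}
The proof of Lemma~\ref{l2} uses only that the two sheets are disjoint, so it applies to sheets of different ends.

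The alternatives you list would not do this job. Vertical planes only give $M\subset Q\times\R$ here. The flux only forces the Scherk-type end to be balanced, since the planar end has zero vertical flux.

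A minor point in the genus-one case: not every vertical graph is simply connected. What you need is that a vertical graph is homeomorphic to a planar domain and hence has genus zero.
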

\begin{proof}
	By formula~\eqref{eq} we have that $2g+2r+\sum_{i=1}^rm_i=5$,
        where $g$ is the genus, $r\geq 1$ the number of ends and $m_i+1$ is the number of geodesic in $\h^2\times\{+\infty\}$.
	\begin{itemize}
		\item If $g=1$ then $r=1$ and $m_1=1$. By
                  Corollary~\ref{cor2} the only embedded end is
                  Scherk-type. This surface cannot exist by
                  Remark~\ref{rem:scherk}, thus the genus of the surface must be zero.
		\item If $g=0$ and $r=2$ we have that $m_1=0$ and
                  $m_2=1$. Then an end is asymptotic to a vertical
                  plane and the other is of Scherk-type by
                  Corollary~\ref{cor2}. Since the surface is embedded,
                  these ends do not intersect each other. This surface
                  cannot exist by Proposition~\ref{th:Alexandrov1}.
	\end{itemize}
	
	Then the only possibility is $g=0$, $r=1$ and $m_1=3$; in
        particular $M$ is simply-connected. 	
\end{proof}
\begin{teo}\label{th:configuraciones-2}
  Let $M\subset \h^2\times \r$ be a complete embedded minimal surface
  with finite total curvature $-6\pi$ and asymptotic polygon $\mathcal P$. Then $\mathcal P$ lies in one of the following cases:
  \begin{itemize}
  \item   If $\mathcal P$ is generic, it must be in one of the following four cases (up to an isometry and a possible reordering of the vertices of $\pi(\mathcal P)$):
  \begin{enumerate}
  \item $0=\theta_1<\theta_2<...<\theta_8<2\pi$ (the ideal points
    $p_i$ are cyclically ordered).  In this case, $M$ must be a Scherk graph over an ideal geodesic octagon.
  \item    $0=\theta_1<\theta_2<\theta_3<\theta_6<\theta_5<\theta_4<\theta_7<\theta_8<2\pi$.
\item $0=\theta_1<\theta_2<\theta_7<\theta_4<\theta_5<\theta_6<\theta_3<\theta_8<2\pi$.
  \item    $0=\theta_1<\theta_2<\theta_3<\theta_6<\theta_7<\theta_8<\theta_5<\theta_4 <2\pi$.
  \end{enumerate}
  \item If $\mathcal P$ is not generic must be in one the sub-cases as a limit of a generic polygon at infinity  (see Figure~\ref{Fig-configuraciones-2}):
  \begin{enumerate}
  \item[(2.a)] $0=\theta_1<\theta_2<\theta_3=\theta_6<\theta_5<\theta_4<\theta_7<\theta_8<2\pi$.  
  \item[(2.b)] $0=\theta_1<\theta_2<\theta_3=\theta_6<\theta_5<\theta_4=\theta_7<\theta_8<2\pi$.
  \item[(3.a)] $0=\theta_1<\theta_2<\theta_7=\theta_4<\theta_5<\theta_6<\theta_3<\theta_8<2\pi$.  
  \item[(3.b)] $0=\theta_1<\theta_2<\theta_7=\theta_4<\theta_5<\theta_6=\theta_3<\theta_8<2\pi$.
  \item[(3.c)] $0=\theta_1<\theta_2=\theta_7=\theta_4<\theta_5<\theta_6=\theta_3<\theta_8<2\pi$.  
  \item[(3.d)] $0=\theta_1<\theta_2=\theta_7=\theta_4<\theta_5<\theta_6=\theta_3=\theta_8<2\pi$.  
  \item[(3.e)] $0=\theta_1<\theta_2=\theta_7<\theta_4<\theta_5<\theta_6<\theta_3=\theta_8<2\pi$.  
  \item[(4.a)] $0=\theta_1<\theta_2<\theta_3=\theta_6<\theta_7<\theta_8<\theta_5<\theta_4 <2\pi$. 
  \item[(4.b)] $0=\theta_1<\theta_2<\theta_3=\theta_6<\theta_7<\theta_8=\theta_5<\theta_4 <2\pi$. 
  \item[(4.c)] $0=\theta_1<\theta_2<\theta_3=\theta_6<\theta_7<\theta_8=\theta_5<\theta_4 =2\pi$. 
  \item[(4.d)] $0=\theta_1<\theta_2<\theta_3<\theta_6<\theta_7<\theta_8=\theta_5<\theta_4 =2\pi$. 
  \item[(4.e)]  $0=\theta_1<\theta_2<\theta_3<\theta_6<\theta_7<\theta_8=\theta_5<\theta_4 <2\pi$. 
      \end{enumerate}
  \end{itemize}
  \end{teo}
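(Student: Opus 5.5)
By Theorem~\ref{th:6pi}, $M$ is simply-connected with a single end and $m_1=3$. So $\mathcal P$ has four horizontal geodesics $\bar l_1,\bar l_3,\bar l_5,\bar l_7$ at $+\infty$ and four $\bar l_2,\bar l_4,\bar l_6,\bar l_8$ at $-\infty$, and the vertices $p_1,\dots,p_8$ are traversed cyclically. I will follow the scheme of Theorem~\ref{th:configuraciones}: first classify embedded generic polygons with eight edges, then pass to limits using Proposition~\ref{prop:limits-polygon}.

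\textbf{Generic case.} The combinatorial input is Lemma~\ref{l1}: two geodesics $l_i,l_j$ may cross only if $i\not\equiv j \pmod 2$. Generic also means the eight ideal points are distinct.

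1. I encode $\pi(\mathcal P)$ as a closed polygon through eight distinct points on $\partial_\infty\h^2$. Its edges alternate in parity, and crossings are allowed only between edges of opposite parity.
2. I normalize $\theta_1=0$ and the orientation, then enumerate the cyclic orderings of $p_2,\dots,p_8$ that satisfy the crossing constraint. This is finite.
3. A useful reduction: since odd edges are pairwise non-crossing, the four odd chords form a non-crossing family, and likewise for the even chords. A non-crossing perfect matching on eight points has Catalan-many shapes, and the cyclic path must alternate between the two matchings. So I list pairs (odd matching, even matching) whose union is a single 8-cycle, up to rotation and reflection.
4. I expect this to yield exactly the convex octagon (case (1)) and three twisted configurations (2)--(4). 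They are distinguished by how the two matchings interlace: one crossing pair of "fold" type, two nested folds, or a fold on each side.
5. In case (1) the end is Scherk-type, so Remark~\ref{rem:scherk} forces $M$ to be a Scherk graph.

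\textbf{Non-generic case.} By Proposition~\ref{prop:limits-polygon}, $\mathcal P$ is a limit of embedded generic polygons $\mathcal P_n$ with eight edges. Passing to a subsequence, all $\mathcal P_n$ lie in one of cases (1)--(4), and $\mathcal P$ arises by letting some adjacent (in angular order) ideal points coalesce.

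1. In case (1), coalescence produces non-embedded or degenerate configurations, except in the planar situation excluded by Lemma~\ref{l:1side}. So case (1) is closed.
2. For cases (2)--(4), I consider each pair of angularly adjacent vertices $p_a,p_b$ and ask whether they may merge.
3. A merge is forbidden if it makes two same-parity geodesics cross transversely (Lemma~\ref{l1}), or makes two same-parity geodesics coincide consecutively in a way violating Lemma~\ref{l:1side}.
4. When several geodesics meet at a common ideal vertex, the order of horizontal sheets must be one of the two orders allowed by Lemma~\ref{l2}.
5. Merges are considered sequentially, tracking which further identifications remain compatible. This should produce exactly the sub-cases (2.a)--(4.e).

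\textbf{Main obstacle.} The hardest step is this limit bookkeeping: checking exhaustively, with the Lemma~\ref{l2} ordering conditions at multiple coincidences, that no other degenerations survive and that the listed ones are consistent. Triple coincidences such as (3.d) are the delicate part. I would organize it by vertex merges in each generic case, using symmetry to reduce duplicates.
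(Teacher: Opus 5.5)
Your route is the paper's own. You classify embedded generic polygons using only Lemma~\ref{l1}, obtain the non-generic ones as degenerations through Proposition~\ref{prop:limits-polygon}, and settle case (1) by Remark~\ref{rem:scherk}. The generic half is sound. Your matching formulation even supplies what the paper merely asserts, namely that at most two crossings occur. The $42$ ordered pairs of non-crossing perfect matchings on eight cyclically ordered points whose union is a single $8$-cycle fall into exactly four classes, which are (1)--(4). Case (1) admits no degeneration simply because any two angularly adjacent vertices of the convex octagon span an edge.

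The step that fails is the claim that the merge bookkeeping, filtered by Lemmas~\ref{l1}, \ref{l:1side} and~\ref{l2}, returns exactly (2.a)--(4.e). For parents (2) and (4) it does. Parent (3) is the problem.

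\begin{itemize}
\item With angular order $p_1,p_2,p_7,p_4,p_5,p_6,p_3,p_8$, the mergeable pairs (angularly adjacent, not spanning a geodesic) are $\{p_2,p_7\}$, $\{p_7,p_4\}$, $\{p_6,p_3\}$ and $\{p_3,p_8\}$.
\item The relabelling $p_j\mapsto p_{j+4}$ preserves parities and acts on this cyclic order as a half-turn. It exchanges the first pair with the third and the second with the fourth.
\item Hence (3.b) and (3.e) are one and the same configuration.
\item Meanwhile, two classes appear nowhere in the list: $\theta_2=\theta_7=\theta_4$ alone, and $\theta_2=\theta_7$, $\theta_6=\theta_3$ (where the two lower geodesics $l_2,l_6$ coincide).
\end{itemize}

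Your filters do not remove these two classes.
\begin{itemize}
\item Limits create no transverse crossings.
\item In the first class, $l_2=l_3$ with $l_1,l_4$ on opposite sides, as Lemma~\ref{l:1side} requires. The three horizontal sheets at the triple point keep their generic order.
\item In the second class, at both double points the sheet curves in $\gamma\times\R$ obey the non-strict order (1) of Lemma~\ref{l2}. Both ends also place the $\bar l_2$-sheet on the same side of the $\bar l_6$-sheet, so there is no inconsistency.
\item You cannot simply forbid two geodesics at the same height from having coinciding projections, because (3.c), (3.d), (4.c) and (4.d) contain such coincidences.
\end{itemize}

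So executing your plan does not prove the statement as written. You would need a genuinely new argument excluding these two configurations, or an enlarged list. The paper's proof does not close this either. It treats the non-generic case with the single criterion of confluence of angularly consecutive vertices not spanning a geodesic, and that criterion generates these two configurations too.
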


\begin{figure}[htb]
	\begin{center}
		\includegraphics[height=12cm]{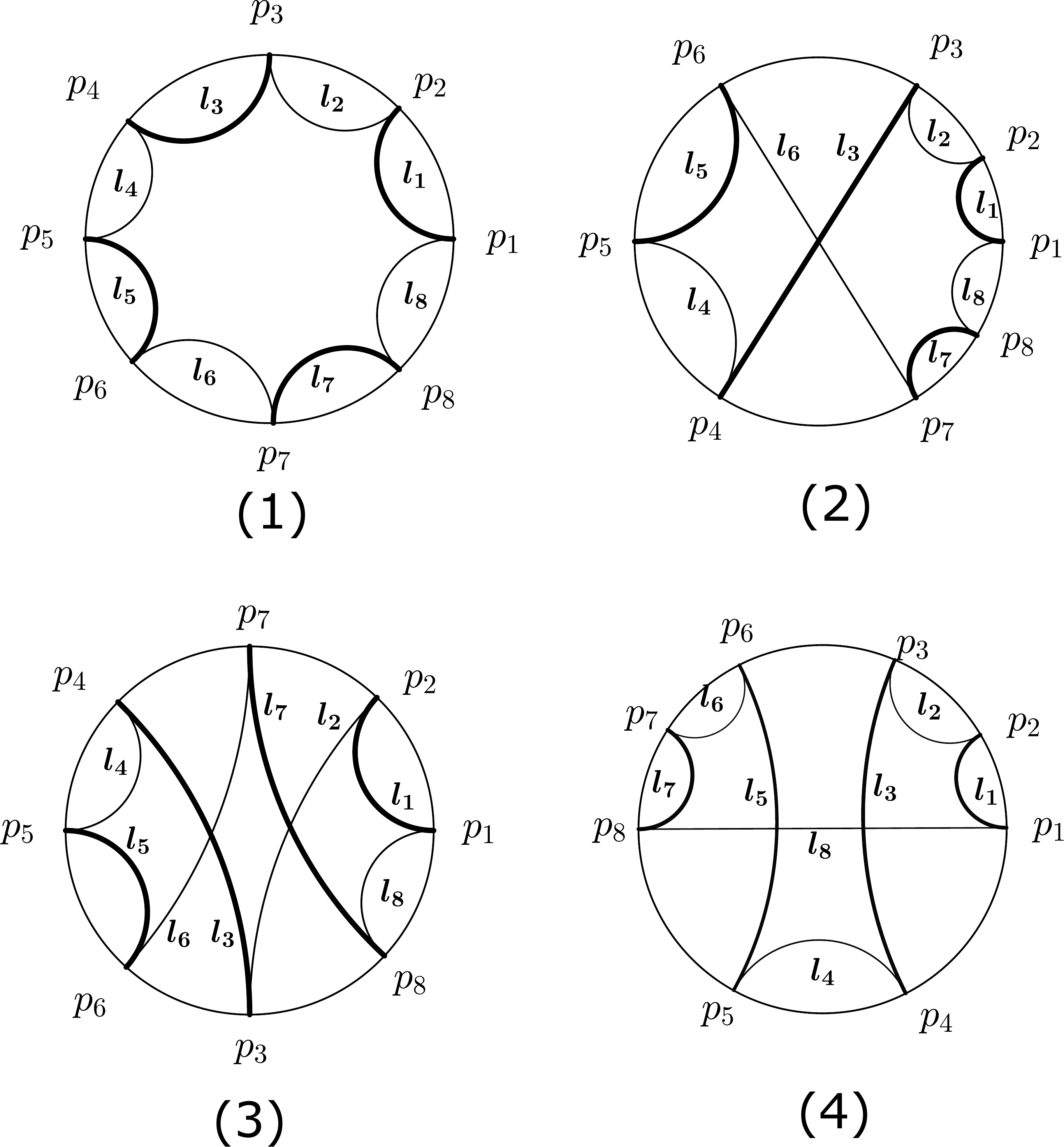}
	\end{center}
	\caption{Projections of the possibles configurations of the generic polygons at infinity in Theorem~\ref{th:configuraciones-2}. }
	\label{Fig-configuraciones-2}
\end{figure}
\begin{proof}
  Let $M$ be asymptotic to $\mathcal{P}$ and use the notation introduced above. By Proposition~\ref{prop:limits-polygon}, we know that $\mathcal{P}$ is either generic or the limit of generic polygons at infinity. We then first study the possible cases for $\mathcal{P}$ when it is generic. We analyze the number of (transverse) intersections between geodesics with subscripts of different parity as by Lemma~\ref{l1} no geodesics with the same parity can intersect.

If there is no intersection, we arrive at case \emph{(1)}.

If there is exactly one intersection, then, after renaming the geodesics if necessary, we can assume that \( l_3 \) and \( l_6 \) intersect. In this case, the only possibility corresponds to case \emph{(2)}.

If there are two intersections, we have two possibilities:
\begin{itemize}
    \item Four geodesics intersect in pairs. Again, after renaming the geodesics if necessary, they must be \( l_3 \) with \( l_6 \) and \( l_2 \) with \( l_7 \), leading to case \emph{(3)}.
    \item One geodesic intersects two different geodesics. After renaming the geodesics if necessary, we assume that \( l_8 \) intersects both \( l_3 \) and \( l_5 \), leading to case \emph{(4)}.
\end{itemize} 
The case of more than two intersections is not possible.

To complete the proof, it suffices to consider the possible limit configurations resulting from the confluence of consecutive vertices in $\mathbb{S}^1$ that are not the endpoints of the same geodesic.
\end{proof}

\begin{remark}
Although Theorems~\ref{th:configuraciones} and ~\ref{th:configuraciones-2} are stated for complete embedded minimal surfaces, we could obtain similar results for minimal  embedded ends of finite total curvature.      
\end{remark}

\section{New examples of surfaces with  total curvature $-6\pi$}\label{sec:examples}
 Observe that   case \emph{(1)} in Theorem~\ref{th:configuraciones-2}  is completely classified as the complete Scherk-graphs (see~\cite[Theorem 7]{HMR} and  Proposition~\ref{th:Alexandrov1}).
 This section is devoted to the construction of some new minimal disks in $\mathbb H^2\times \R $ with some of the asymptotic behavior described by the configurations \emph{(2)} and \emph{(3)}  of Theorem~\ref{th:configuraciones-2}, see Figure~\ref{Fig-Superficies}. Even if the ends of these examples are embedded, we do not know whether these surfaces are embedded (although we expect that they are.)
 
 In~\cite{Coskunuzer}, Coskunuzer studied the asymptotic Plateau problem in $\mathbb H^2\times\R$ for both area-minimizing and minimal surfaces. He gave necessary and sufficient conditions under certain extra hypothesis. 
 The examples we provide can be seen as solutions to some asymptotic Plateau problem for minimal surfaces in the exceptional case $\alpha(\mathcal P)=\beta(\mathcal P)$ (using the notation in Coskunuzer's paper), not treated in general in~\cite{Coskunuzer}. 
 
 \begin{figure}[htb]
	\begin{center}
		\includegraphics[height=8cm]{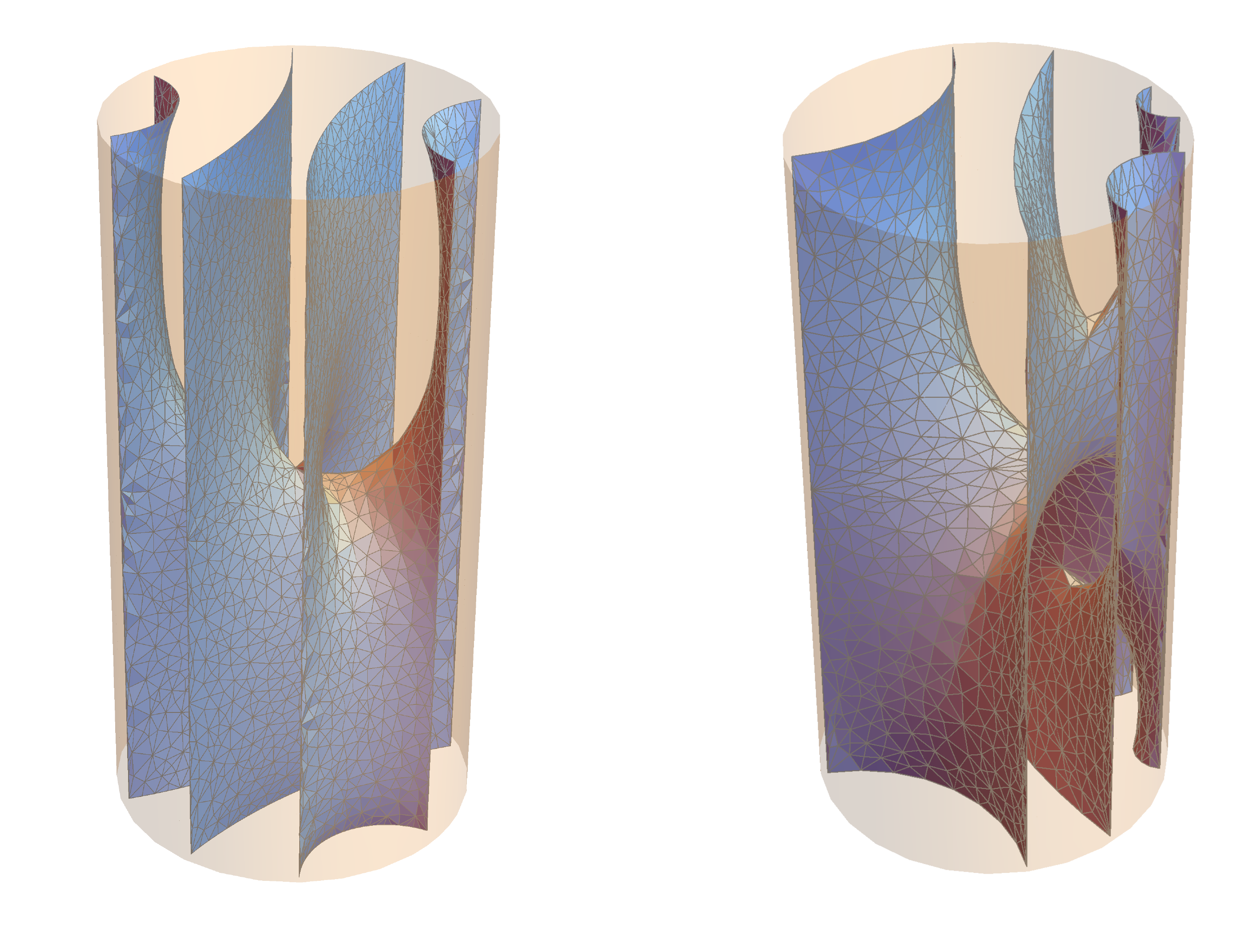}
	\end{center}
	\caption{The surfaces constructed in Proposition~\ref{prop: Ejemplo 1} and~\ref{prop: Ejemplo 2} that realizes cases \it{(3)} (left) and \it{(2)} (right) of Theorem~\ref{th:configuraciones-2}.} 
	\label{Fig-Superficies}
\end{figure}

\begin{prop}\label{prop: Ejemplo 1}
    There exists a one-parameter family of simply-connected minimal surfaces with total curvature $-6\pi$ with embedded end asymptotic to a polygon at infinity lying in case (3) of Theorem~\ref{th:configuraciones-2}. 
\end{prop}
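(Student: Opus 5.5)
We construct the family via an asymptotic Plateau problem on a fundamental piece, followed by Schwarz reflection, as announced in the abstract. Case (3) of Theorem~\ref{th:configuraciones-2} (and the degenerate version of Lemma~\ref{l:1side}(2) with $\alpha=\pi/k$, $k=2$) has a dihedral symmetry: $l_2$ crosses $l_7$ and $l_3$ crosses $l_6$. So we look for a polygon invariant under rotation by $\pi$ about a vertical axis through the origin, and under reflections in vertical planes. Fix a parameter $\alpha\in(0,\pi/2)$. Consider a geodesic sector $\Omega_\alpha\subset\h^2$ with vertex at the origin $O$, bounded by two geodesic rays $\gamma_1,\gamma_2$ from $O$ making angle $\alpha$ (or $\pi/2$), together with ideal arcs. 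Let $\Gamma_\alpha$ be the Jordan curve in $\overline{\h^2\times\R}$ made of the vertical line $\{O\}\times\R$, horizontal geodesic pieces at heights $\pm\infty$ over the edges of $\pi(\mathcal P)$ lying in $\Omega_\alpha$, and the vertical lines in $\partial_\infty\h^2\times\R$ over the corresponding ideal vertices. We also need a horizontal geodesic ray of symmetry in the boundary so that the pieces glue correctly. We choose the ideal vertices so that the part at $+\infty$ and the part at $-\infty$ have equal truncated lengths. This is the exceptional flux case $\alpha(\mathcal P)=\beta(\mathcal P)$ of~\eqref{eq:flux}, which is necessary because the final surface is a disk. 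The balancing leaves one free parameter, which gives the one-parameter family.

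\textbf{Step 1 (Plateau problem).} Solve the Plateau problem for truncations $\Gamma_\alpha^n$, cutting at heights $\pm n$ and replacing ideal vertical lines by vertical segments on large circles. This gives area-minimizing disks $\Sigma_n$ by Meeks--Yau. Use barriers to get convergence: vertical planes over the boundary geodesics, which confine $\Sigma_n$ to $\overline\Omega_\alpha\times\R$ by the maximum principle, and Scherk graphs over ideal quadrilaterals (as in the proof of Lemma~\ref{claim-parametros}), which prevent escape to $\pm\infty$ in the interior. Local area bounds come from the minimizing property and comparison with these barriers. A diagonal subsequence then converges to a minimal disk $\Sigma$ with $\partial\Sigma$ the finite part of $\Gamma_\alpha$ and $\partial_\infty\Sigma$ the ideal part. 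The main obstacle is the flux-balanced case. In that case one cannot rule out with standard barriers that $\Sigma_n$ drifts off vertically, or degenerates to a vertical plane or Scherk pieces. To prevent this, I would normalize vertically so that $\Sigma_n$ meets a fixed compact set, e.g. fix the height on the interior vertical segment by symmetry of the construction: the rotation by $\pi$ about a horizontal geodesic through $O$ swaps $\pm\infty$. Uniqueness of the minimizer, or a symmetrization argument, then makes $\Sigma_n$ invariant. This forces $\Sigma_n$ to contain the horizontal ray and pins it at height $0$. Curvature estimates for stable surfaces then give smooth convergence.

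\textbf{Step 2 (graph property and boundary behaviour).} Show that the interior of $\Sigma$ is a vertical graph over $\Omega_\alpha$ (Rad\'o-type argument on $\Sigma_n$), with boundary values $\pm\infty$ on the appropriate edges. Near the ideal boundary, $\Sigma$ is then a Jenkins--Serrin type graph. This gives asymptotic behavior to the prescribed polygon, so Theorem~\ref{th:caracterizacion-ctf} applies to the completed surface.

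\textbf{Step 3 (reflection and curvature count).} Apply Schwarz reflection across the vertical line $\{O\}\times\R$, which is rotation by $\pi$ about it, and across the horizontal geodesic ray if present. This yields a complete minimal surface $M_\alpha$ that is simply connected, since we glue disks along arcs in a tree-like pattern around the single axis. Its end is asymptotic to an admissible polygon with four geodesics at each of $\pm\infty$, so $m_1=3$. Formula~\eqref{eq} with $g=0$, $r=1$ then gives total curvature $2\pi(2-2-3)=-6\pi$. Embeddedness of the end follows because the graph pieces near infinity project to disjoint sectors, or are ordered as in Lemma~\ref{l2}. Finally, checking the order of the $\theta_j$ against the list in Theorem~\ref{th:configuraciones-2} places $\mathcal P$ in case (3).
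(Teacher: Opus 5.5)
\textbf{There is a genuine gap.} You never fix the boundary curve of the fundamental piece, and the one concrete mechanism you describe cannot produce case (3).

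Suppose the piece contains the vertical line $\{O\}\times\mathbb{R}$ and you Schwarz-reflect across it. Then the complete surface is invariant under the rotation $\rho$ by $\pi$ about that line, which acts on $\mathbb{H}^2$ as $z\mapsto -z$. The two rays bounding your sector become, after rotation, complete geodesics at $+\infty$ and at $-\infty$ that cross at $O$. By Lemma~\ref{l1}, no other geodesic of $\pi(\mathcal P)$ crosses them there. Every other crossing point $x\neq O$ is sent by $\rho$ to a different crossing point $-x$. Hence $\pi(\mathcal P)$ has an odd number of crossings. This is the twisted-Scherk mechanism, and it yields one crossing. Case (3) of Theorem~\ref{th:configuraciones-2} has exactly two crossings ($l_2$ with $l_7$, $l_3$ with $l_6$), so no surface built this way lies in case (3).

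Two further steps also fail. Your remedy for vertical drift relies on uniqueness of the area minimizer, which is not available. And Step 2 is false for the configuration that actually works (see below), so a Rad\'o-type graph argument cannot be how the asymptotic boundary is identified.

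\textbf{The missing idea} is to put \emph{finite horizontal geodesics} into the Plateau boundary and reflect across \emph{them}. The paper's polygon $\mathcal P_\alpha$ consists of:
\begin{itemize}
\item the horizontal rays $h_1$ from $(-i,0)$ to $(0,0)$ and $h_2$ from $(0,0)$ to $(1,0)$, meeting at angle $\pi/2$;
\item the vertical half-lines over $1$ (heights $0$ to $+\infty$) and over $-i$ (heights $-\infty$ to $0$);
\item $\bar l_1$ from $1$ to $e^{i\alpha}$ at $+\infty$, the vertical line over $e^{i\alpha}$, and $\bar l_2$ from $e^{i\alpha}$ to $-i$ at $-\infty$.
\end{itemize}
The parameter is $\alpha\in(0,\pi/2]$.

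This choice removes your difficulties:
\begin{itemize}
\item Every truncated Meeks--Yau solution contains $h_1\cap B(r_0)$, so the sequence cannot drift vertically and no uniqueness is needed.
\item Vertical planes over the convex hull, together with the hyperbolic-invariant graphs $S^+$ and $S^-_1\cup S^-_2$ (value $\pm\infty$ on one geodesic, $0$ on the complementary arc), confine the limit to a region whose asymptotic boundary is exactly $\mathcal P_\alpha$.
\item No graph property is used. Indeed $\pi(\bar l_2)$ crosses $\pi(h_2)$, so this piece is not a vertical graph.
\end{itemize}

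Rotating about the two horizontal geodesics gives four pieces. Their ideal vertices, in polygon order, are $1,e^{i\alpha},-i,e^{i(\pi-\alpha)},-1,e^{i(\pi+\alpha)},i,e^{-i\alpha}$. This is exactly the ordering of case (3), and of (3.d) when $\alpha=\pi/2$. The flux balance you impose by hand is automatic, since these rotations swap $+\infty$ and $-\infty$. Total curvature $-6\pi$ then follows from Theorem~\ref{th:caracterizacion-ctf} and~\eqref{eq}.
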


\begin{proof}
We will show that any surface in this family can be obtained by successive Schwarz reflections from an area-minimizing surface, obtained as the limit of a sequence of area-minimizing surfaces solving suitable Plateau problems.

Recall we are denoting by $\overline{\mathbb H^2\times\mathbb R}$ the product compactification of $\mathbb H^2\times\mathbb R$. For every $r,h\in(0,\infty]$, $\overline{\mathbb H^2\times\mathbb R}$ is diffeomorphic to the compact cylinder $\overline B(r)\times[-h,h]$, where $\overline B(r)\subset\mathbb H^2$ is the closed geodesic ball of (hyperbolic) radius $r$ centered at the origin; hence we can identify them. 
Observe that when $r=h=\infty$, this identification is simply the identity. Although $r$ is the hyperbolic distance to the origin in $\mathbb H^2$ we will continue identifying  $\partial_\infty \mathbb H^2$ with $\mathbb S^1\subset\mathbb C$ for the notation of ideal points.

Consider in $\overline{\mathbb H^2\times\mathbb R}$ the admissible polygon at infinity $\mathcal P_\alpha$ with vertices (see Figure~\ref{Fig-Poligono-1})
\begin{align*}
   &q_1=(1,+\infty),\ 
q_2=(e^{i\alpha},+\infty),\ 
q_3=(e^{i\alpha},-\infty),\\
&q_4=(-i,-\infty),\
q_5=(-i,0),\
q_6=(0,0),\
q_7=(1,0),
\end{align*}
for $0<\alpha\le\frac{\pi}{2}$. 

\begin{figure}[htb]
	\begin{center}
		\includegraphics[height=6cm]{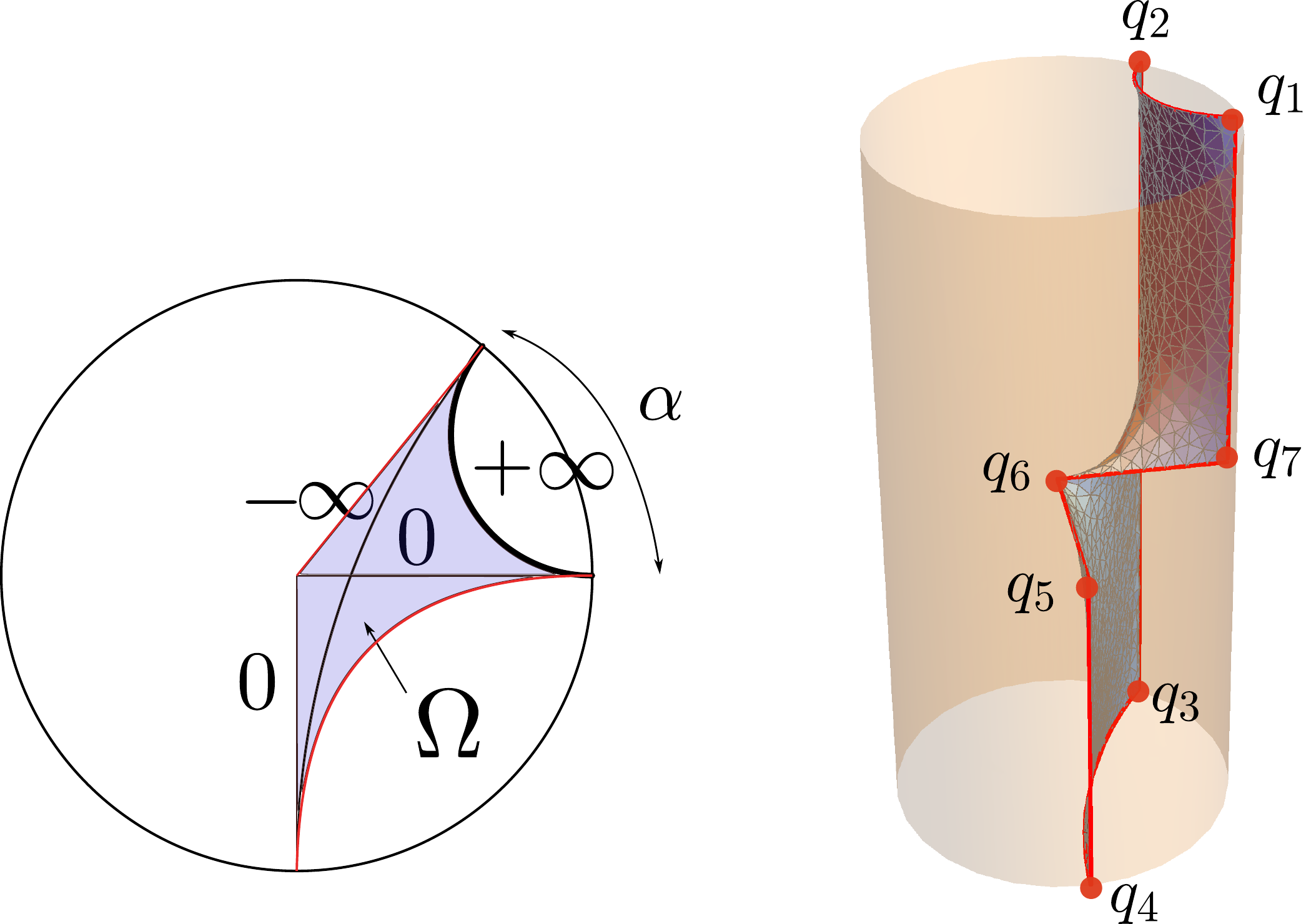}
	\end{center}
	\caption{Left: Vertical projection of the admissible polygon at infinity $\mathcal  P_{\alpha}$. Right: The admissible polygon at infinity $\mathcal P_\alpha$ (in red) and the area-minimizing surface $\Sigma_\alpha$ spanned by it. }
	\label{Fig-Poligono-1}
\end{figure}

Denote by $\overline{q_iq_j}$ the (horizontal or vertical) geodesic joining $q_i$ and $q_j$. Then the polygon $\mathcal P_\alpha$ consists of the horizontal ideal geodesics
\[
\overline{l}_1=\overline{q_1q_2}\subset\mathbb H^2\times\{+\infty\},
\qquad
\overline{l}_2=\overline{q_3q_4}\subset\mathbb H^2\times\{-\infty\},
\]
the horizontal geodesics
\[
h_1=\overline{q_5q_6}\subset\mathbb H^2\times\{0\},
\qquad
h_2=\overline{q_6q_7}\subset\mathbb H^2\times\{0\},
\]
and the vertical ideal lines
\[
v_1=\overline{q_7q_1},\qquad
v_2=\overline{q_2q_3},\qquad
v_3=\overline{q_4q_5}.
\]

We denote by $\mathcal P_\alpha^{r,h}$ the image of $\mathcal P_\alpha$ under the above identification of $\overline{\mathbb H^2\times\mathbb R}$ with $B(r)\times[-h,h]$. Since the cylinder $B(r)\times[-h,h]$ is mean convex, Meeks--Yau's Theorem~\cite{M-Y} guarantees the existence of an (embedded) area-minimizing surface $\Sigma_\alpha^{r,h}$ with boundary $\partial\Sigma_\alpha^{r,h}=\mathcal P_\alpha^{r,h}$ for every $r,h>0$.

We are going to construct the area-minimizing surface $\Sigma_\alpha$ spanned by $\mathcal P_\alpha$ by considering two successive limiting processes from $\Sigma_\alpha^{r,h}$. We first fix the height $h$ and let the radius $r$ tend to infinity. Next we let $h$ tend to infinity.

Fix $h>0$ and let $r\to\infty$. Clearly, $\mathcal P_\alpha^{r,h}$ converges to 
$\mathcal P_\alpha^{\infty,h}=\mathcal P_\alpha^{h}$ in $\overline{\mathbb H^2\times\mathbb R}$, 
where $\mathcal P_\alpha^{h}$ is the image of $\mathcal P_\alpha$ under the identification of $\overline{\mathbb H^2\times\mathbb R}$ with $\overline{\mathbb H^2}\times[-h,h]$.

By the maximum principle, each surface $\Sigma_\alpha^{r,h}$ is contained in the horizontal slab between the horizontal planes $\mathbb H^2\times\{-h\}$ and $\mathbb H^2\times\{h\}$, intersecting them only along its boundary. On the other hand, let us denote by $\Omega$ the convex hull of $\pi(\mathcal P_\alpha^{h})$. Another application of the maximum principle, this time involving vertical planes, shows that $\Sigma_\alpha^{r,h}\subset\Omega\times(-h,h)$, for any $r>0$. 

We observe that $h_1\subset\partial\Omega$ (where we are identifying $\mathbb H^2$ with $\mathbb H^2\times\{0\}$). Hence we can extend $\Sigma_\alpha^{r,h}$ by reflection symmetry about $h_1$ obtaining an embedded minimal surface $\widetilde \Sigma_\alpha^{r,h}$. 
Given any fixed $r_0>0$, for any $r>r_0$ the surface $\widetilde \Sigma_\alpha^{r,h}$ contains the horizontal geodesic segment $h_1^{r_0}=h_1\cap \mathcal P_\alpha^{r_0,h}$. Hence the sequence of surfaces $\widetilde\Sigma_\alpha^{r,h}$ (and then $\Sigma_\alpha^{r,h}$ as well) has accumulations points and cannot escape to infinity as $r\to\infty$.

Since $\mathbb H^2\times\mathbb R$ has bounded geometry and every $\Sigma_\alpha^{r,h}$ is area-minimizing, hence stable, then the standard compactness theorem for stable embedded minimal surfaces says that $\Sigma_\alpha^{r,h}$ converges smoothly on compact subsets with the $C^k$ topology for every $k\ge0$  (after passing to a subsequence) to an embedded minimal surface $\Sigma_\alpha^{h}\subset\Omega\times[-h,h]$. We observe that $\Sigma_\alpha^{h}$ is area-minimizing as it is the limit of area-minimizing surfaces. 

Since for any fixed $r_0>0$ we get that $\partial\Sigma_\alpha^{r,h}$ contains $h_1^{r_0}$, for any $r>r_0$, we deduce $h_1^{r_0}\subset\partial\Sigma_\alpha^{h}$. Taking $r_0\to+\infty$ we deduce that $h_1\subset\partial\Sigma_\alpha^{h}$. A similar argument shows that $h_2\subset\partial\Sigma_\alpha^{h}$.

Let us call $\Gamma_\alpha^{h}=\partial\Sigma_\alpha^{h}\cup\partial_\infty\Sigma_\alpha^{h}$ and prove that $\Gamma_\alpha^{h}=\mathcal P_\alpha^{h}$. Since $\mathcal P_\alpha^{r,h}$ converges to $\mathcal P_\alpha^{h}$ as $r\to+\infty$, every point of $\mathcal P_\alpha^{h}$ belongs to the boundary of the limit surface $\Sigma_\alpha^{h}$. Thus $\mathcal P_\alpha^{h}\subset\Gamma_\alpha^{h}$.
On the other hand, since $\Sigma_\alpha^{h}\subset\Omega\times[-h,h]$ and every point $p\in\partial_\infty({\mathbb H^2\times\mathbb R})\setminus\mathcal P_\alpha^{h}$ lies  outside $\partial_\infty\Omega\times[-h,h]$, hence $p\notin\Gamma_\alpha^{h}$. We can then deduce $\Gamma_\alpha^{h}=\mathcal P_\alpha^{h}$.

We now let $h\to\infty$. Let $S^+$ be the hyperbolic-invariant minimal graph taking the value $+\infty$ over the projection of $\overline{l}_1$ and the value $0$ over the arc $\gamma\subset\partial_\infty\mathbb H^2$ with endpoints $\pi(q_1)=1$ and $\pi(q_2)=e^{i\alpha}$ that contains $-1$ (see Figure~\ref{Fig-barreras-1}). For any small $\delta>0$, define $S^+_\delta$ as the horizontal hyperbolic translation of $S^+$ taking the value $+\infty$ over the geodesic with endpoints $e^{-i\delta}$ and $e^{i(\alpha+\delta)}$. For $r$ large enough we prove by the maximum principle with vertical translations of $S^+_\delta$ that $S^+_\delta$ is an upper barrier for the surfaces $\Sigma_\alpha^{r,h}$. Since this holds for any $r,\delta>0$,  we obtain that $S^+$ is an upper barrier for $\Sigma_\alpha^{h}$ as well, for any $h>0$.

Let us now consider $S^-_1$ and $S^-_2$ the hyperbolic-invariant minimal graphs taking the value $-\infty$ over the projection of $\overline{l}_2$ and each of them taking the value $0$ in a different component of $\partial_\infty\mathbb H^2\setminus\{e^{i\alpha},-i\}$ (see Figure~\ref{Fig-barreras-1}). Similarly as above we prove that $S^-_1\cup S^-_2$ is a lower barrier for $\Sigma_\alpha^{h}$, for any $h>0$. 

Therefore the stable minimal surface $\Sigma_\alpha^{h}$ is contained in the region $\mathcal{R}$ of $\Omega\times\R$ below $S^+$ and above $S^-_1\cup S^-_2$, for any $h>0$. Moreover, 
 $h_1\cup h_2\subset \Sigma_\alpha^{h}$ for any $h$, so the sequence has accumulation points. Therefore, arguing as above, after passing to a subsequence, $\Sigma_\alpha^{h}$ converges smoothly on compact subsets, with the $C^k$ topology for every $k\ge0$, to an embedded minimal surface $\Sigma_\alpha\subset\mathcal{R}$. 
 Since $\mathcal P_\alpha^{h}$ converges to $\mathcal P_\alpha$, we conclude  $\mathcal P_\alpha\subset \partial\Sigma_\alpha\cup\partial_\infty\Sigma_\alpha$. Moreover, $\partial_\infty\mathcal{R}=\mathcal P_\alpha\cap \partial_\infty(\h^2\times\R)$, from where we deduce $\partial\Sigma_\alpha\cup\partial_\infty\Sigma_\alpha=\mathcal P_\alpha$.
 

\begin{figure}[htb]
	\begin{center}
		\includegraphics[height=4cm]{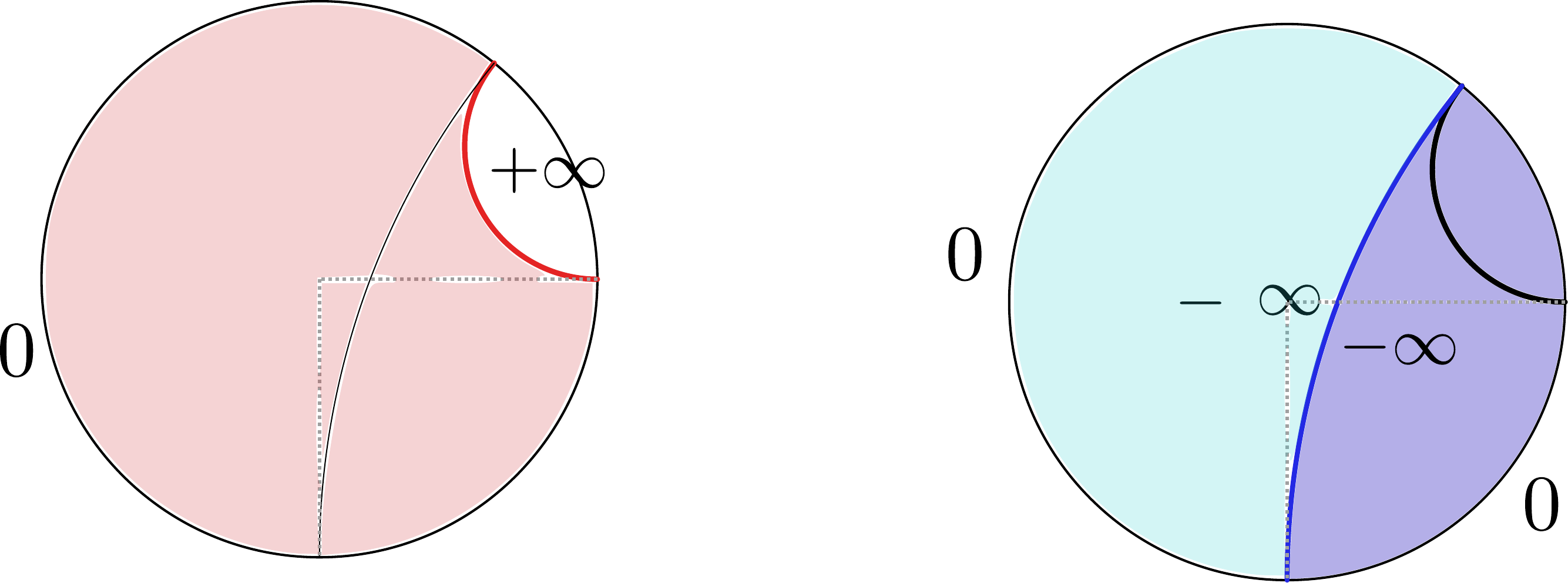}
	\end{center}
	\caption{The projections of the upper (left) and lower (right) barriers used in Proposition \ref{prop: Ejemplo 2}.}
	\label{Fig-barreras-1}
\end{figure}

Since the area-minimizing property is preserved under smooth convergence on compact subsets, $\Sigma_\alpha$ is itself an area-minimizing surface. In particular, it is a smoothly embedded minimal surface with boundary $\mathcal P_\alpha$. Reflecting $\Sigma_\alpha$ across the horizontal geodesics $h_1$ and $h_2$ by Schwarz reflection, we obtain a complete proper minimal surface $\bar\Sigma_\alpha$ asymptotic to an embedded admissible polygon at infinity of type \emph{(3)} for $0<\alpha<\frac{\pi}{2}$, and to the limit case \emph{(d)} when $\alpha=\frac{\pi}{2}$, in the sense of Theorem~\ref{th:configuraciones-2}. We observe that the end of $\bar\Sigma_\alpha$ is embedded by item \emph{(5)} of Theorem~\ref{th:ctf}. Finally, $\bar\Sigma_\alpha$ has total curvature $-6\pi$ by Theorems~\ref{th:caracterizacion-ctf} and item \emph{(6)} in~\ref{th:ctf}.
\end{proof}

\begin{remark}
In the previous theorem, if we reflect $\Sigma_\alpha$ about $h_1$ we obtain an embedded surface. However, the global embeddedness of $\bar\Sigma_\alpha$ may fail after the Schwarz reflection about $h_2$. 
\end{remark}

\begin{remark}[Examples with higher total curvature]\label{remark: Ejemplo k}
Using the same ideas as in Proposition~\ref{prop: Ejemplo 1}, for each $k\geq2$ one can similarly construct a one-parameter family of minimal surfaces $\bar \Sigma_\alpha^k$ with total curvature $-2(2k-1)\pi$, where $0<\alpha\leq\frac{\pi}{k}$. The construction is analogous, except that we take $\pi(q_4)=\pi(q_5)=e^{-i\pi/k}$, which means that the two horizontal geodesics $h_1$ and $h_2$ meet at an angle of $\frac{\pi}{k}$. Note that the complete surface $\bar\Sigma_\alpha^k$ has $k$ complete horizontal geodesics in $\mathbb H^2\times\{0\}$ meeting at an angle of $\frac{\pi}{k}$; see Figure~\ref{Fig-Varios-LADOS}. 
\end{remark}

\begin{figure}[htb]
	\begin{center}
		\includegraphics[height=8cm]{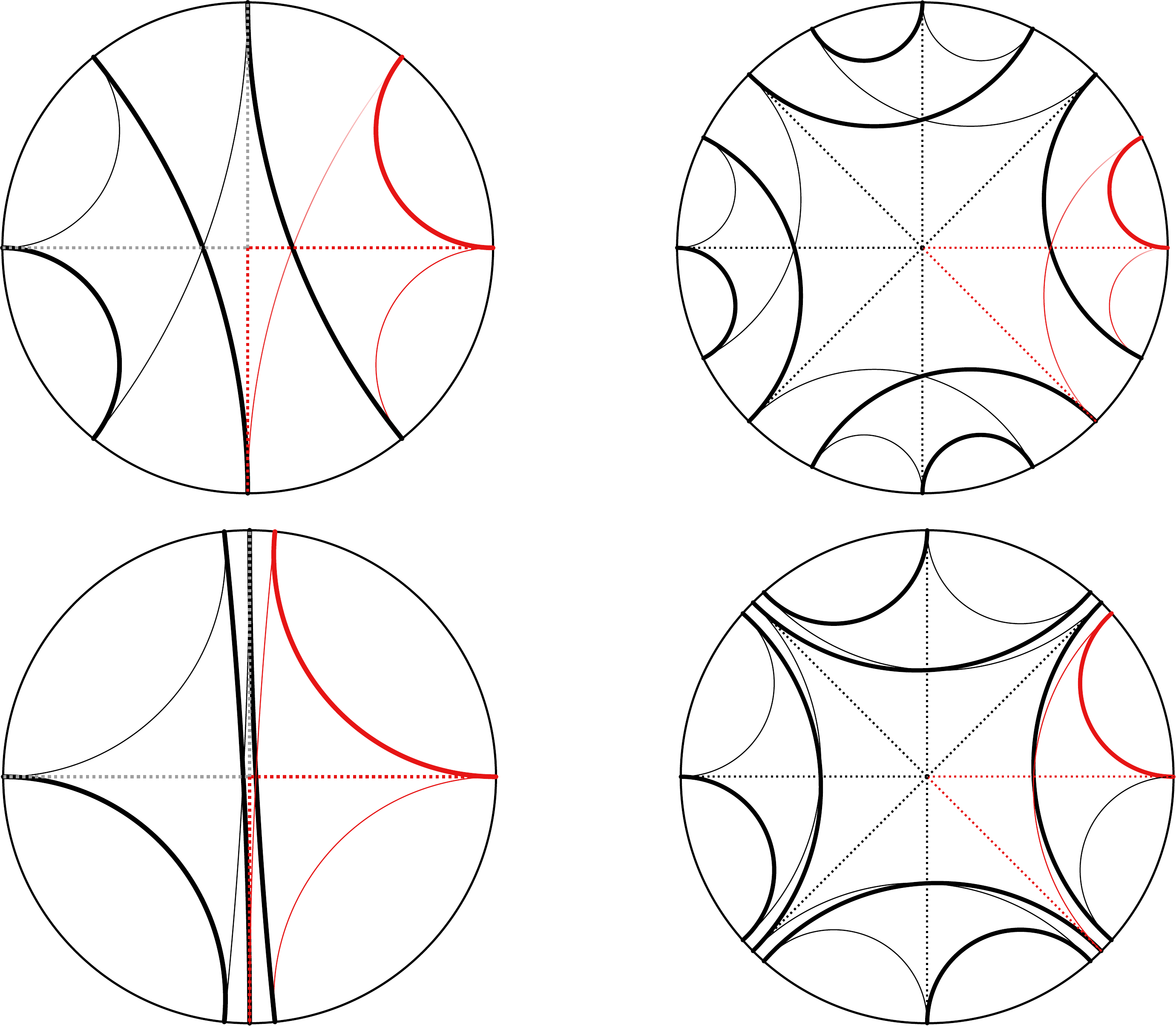}
	\end{center}
	\caption{Left: Projection of the  surface $\bar\Sigma_ {\alpha}^2$. Right: Projection of the $\bar\Sigma_ {\alpha}^4$.\\ The figures below show cases in which $\alpha$ is close to $\frac\pi k$.  }
	\label{Fig-Varios-LADOS}
\end{figure}

\begin{prop}\label{prop: Ejemplo 2}
    There exists a two-parameter family of simply-connected minimal surfaces with total curvature $-6\pi$ with embedded end asymptotic to a polygon at infinity lying in case (2) of Theorem~\ref{th:configuraciones-2}. 
\end{prop}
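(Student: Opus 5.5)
The plan is to adapt the construction of Proposition~\ref{prop: Ejemplo 1}. We solve an asymptotic Plateau problem for a polygon that contains exactly one complete horizontal geodesic $h\subset\mathbb H^2\times\{0\}$, and then take a single Schwarz reflection (rotation by $\pi$) about $h$. Fix ideal points $a,b\in\partial_\infty\mathbb H^2$, say $a=1$, $b=-1$, so that $h$ projects to $\{y=0\}$. Choose three ideal points $p_2,p_3,p_4$ in the upper half circle $\{y>0\}$. Consider the polygon $\mathcal Q$ with vertices
\[
(a,0),\ (a,+\infty),\ (p_2,+\infty),\ (p_2,-\infty),\ (p_3,-\infty),\ (p_3,+\infty),\ (p_4,+\infty),\ (p_4,-\infty),\ (b,-\infty),\ (b,0),
\]
closed by $h$. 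It has two horizontal ideal geodesics at $+\infty$ and two at $-\infty$, with the vertical lines joining them. The rotation about $h$ exchanges $\mathbb H^2\times\{+\infty\}$ and $\mathbb H^2\times\{-\infty\}$ and reflects the projection across $\{y=0\}$. The completed surface will therefore have one end asymptotic to an admissible polygon with $4+4$ horizontal geodesics, so $m_1=3$, genus $0$ and $r=1$. By \eqref{eq} its total curvature is $-6\pi$. The vertical flux condition \eqref{eq:flux} holds automatically by the symmetry, since each top geodesic of the reflected half pairs with a bottom geodesic of equal length. The parameters are the three points $p_2,p_3,p_4$ modulo the hyperbolic translations along $h$, which gives a two-parameter family.

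The order of the vertices along $\mathbb S^1$ has to be chosen so that the completed polygon is embedded and of type \emph{(2)}. I would take $0<\arg p_4<\arg p_3<\arg p_2<\pi$, or a permutation of this, and check in the projection the following. First, same-parity geodesics never cross transversely; note that reflection across $\{y=0\}$ sends a top geodesic in the upper half to a bottom geodesic in the lower half. Second, exactly one pair of different parity crosses, namely the geodesic through $a$ (or $b$) and its reflected partner. Third, the horizontal sheets at $a$ and $b$ are ordered as in Lemma~\ref{l2}. This combinatorial check is short, and it fixes which admissible orders of $p_2,p_3,p_4$ are allowed (an open set, hence two parameters).

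The existence of the fundamental piece follows the proof of Proposition~\ref{prop: Ejemplo 1} step by step. Truncate $\mathcal Q$ to $\mathcal Q^{r,h}\subset \overline B(r)\times[-h,h]$ and apply Meeks--Yau to get area-minimizing disks $\Sigma^{r,h}$. The maximum principle with vertical planes confines them to $\Omega\times[-h,h]$, where $\Omega$ is the convex hull of the upper half disk; note $h\subset\partial\Omega$. Since every $\Sigma^{r,h}$ contains a fixed compact segment of $h$, the family cannot escape to infinity. Stability then gives smooth subsequential limits, first as $r\to\infty$ and then as $h\to\infty$.

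The step I expect to be the main obstacle is the uniform height barrier in the second limit. I would try, as in Proposition~\ref{prop: Ejemplo 1}, hyperbolic-invariant graphs $S^+_1,S^+_2$ equal to $+\infty$ over the two top geodesics and $0$ on the complementary arcs, slightly enlarged by hyperbolic translations. The analogous graphs $S^-_1,S^-_2$ would serve for the bottom geodesics, arguing by vertical translation and the maximum principle. The delicate point is that the top and bottom geodesics through $p_2,p_3,p_4$ share endpoints, so the regions under the upper barriers and above the lower barriers must still leave the boundary at infinity equal to exactly $\mathcal Q$. If this fails near a common vertex, I would replace the invariant graphs there by Scherk graphs over small ideal quadrilaterals as local barriers. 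Once the limit $\Sigma$ with $\partial\Sigma\cup\partial_\infty\Sigma=\mathcal Q$ is obtained, Schwarz reflection about $h$ gives a complete proper simply-connected surface. Theorem~\ref{th:caracterizacion-ctf} and item~(6) of Theorem~\ref{th:ctf} then give total curvature $-6\pi$, and item~(5) of Theorem~\ref{th:ctf} together with the embeddedness of the asymptotic polygon gives an embedded end.
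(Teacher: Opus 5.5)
\textbf{There is a genuine gap: your configuration can never produce case (2).} The combinatorial check is where the construction breaks, and no permutation of $p_2,p_3,p_4$ repairs it. The rotation about $h$ acts on projections as $z\mapsto\bar z$.

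\begin{itemize}
\item All ideal vertices $a,b,p_2,p_3,p_4$ of $\mathcal Q$ lie in the closed upper semicircle. So every horizontal geodesic of $\pi(\mathcal Q)$ lies in the closed upper half-disk, and every reflected one lies in the closed lower half-disk.
\item A geodesic of the first group and one of the second can therefore meet only at the ideal points $a$ or $b$, since their endpoints never interleave.
\item In particular, the geodesic through $a$ and its reflected partner share the endpoint $a$ and do \emph{not} cross transversely, contrary to your claim.
\item Hence the transverse crossings of the completed polygon come in mirror-symmetric pairs, and their number is always even. Case (2) has exactly one crossing.
\end{itemize}

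Concretely, for your order $0<\arg p_4<\arg p_3<\arg p_2<\pi$ there are two crossings: $\pi(\overline{ap_2})\cap\pi(\overline{p_4b})$ and its mirror image. These involve four distinct geodesics, which is case (3), the configuration of Proposition~\ref{prop: Ejemplo 1}. The monotone order instead gives case (1).

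\textbf{What case (2) actually requires.} With a single Schwarz reflection, the unique crossing must be fixed by the symmetry. So it has to be a point of $\pi(h)$ where a horizontal geodesic of the fundamental polygon crosses its own mirror image. This forces the fundamental polygon to have ideal vertices on \emph{both} sides of $\pi(h)$.

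This is what the paper does. It takes vertices $1,-1$ on the axis, $i$ above it, and $e^{-i\alpha},e^{-i\beta}$ below it, with $0<\beta<\alpha\le\frac{\pi}{2}$. The bottom geodesic from $i$ to $e^{-i\alpha}$ crosses the axis, and after the rotation it crosses its image, the top geodesic from $-i$ to $e^{i\alpha}$.

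\textbf{Consequences for the rest of your plan.} This change invalidates the steps that rely on the half-disk picture.
\begin{itemize}
\item $h$ now lies in the interior of the convex hull of the projection. The reflected piece overlaps the original in projection, so embeddedness of the completed surface is no longer automatic; the paper says exactly this.
\item The upper barrier must equal $+\infty$ on both top geodesics at once, and the lower barrier must equal $-\infty$ on both bottom geodesics at once. The paper uses Jenkins--Serrin graphs over the domains $\Omega_1,\Omega_2$ between these pairs of geodesics, plus a hyperbolic-invariant graph over $\Omega_3$.
\item The Jenkins--Serrin condition for the lower barrier is what restricts the parameters to $\beta<\beta_0(\alpha)$. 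Together with the constraints $0<\beta<\alpha\le\frac{\pi}{2}$, this defines the two-parameter family.
\end{itemize}
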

\begin{proof}
We follow the same strategy as in Proposition~\ref{prop: Ejemplo 1}, so we only sketch the main arguments.

Assume that $0<\beta<\alpha\leq\frac{\pi}{2}$ and consider the following points in $\overline{\mathbb H^2\times\mathbb R}$  (see Figure~\ref{Fig-Poligono-2}):
\begin{align*}
     &q_1=(1,-\infty),\ q_2=(1,0),\ q_3=(-1,0),\ q_4=(-1,+\infty),\\
     &q_5=(i,+\infty),\ q_6=(i,-\infty),\ q_7=(e^{-i\alpha},-\infty),\ q_8=(e^{-i\alpha},+\infty),\\
     &q_9=(e^{-i\beta},+\infty),\ q_{10}=(e^{-i\beta},-\infty).
\end{align*}

\begin{figure}[htb]
	\begin{center}
		\includegraphics[height=7cm]{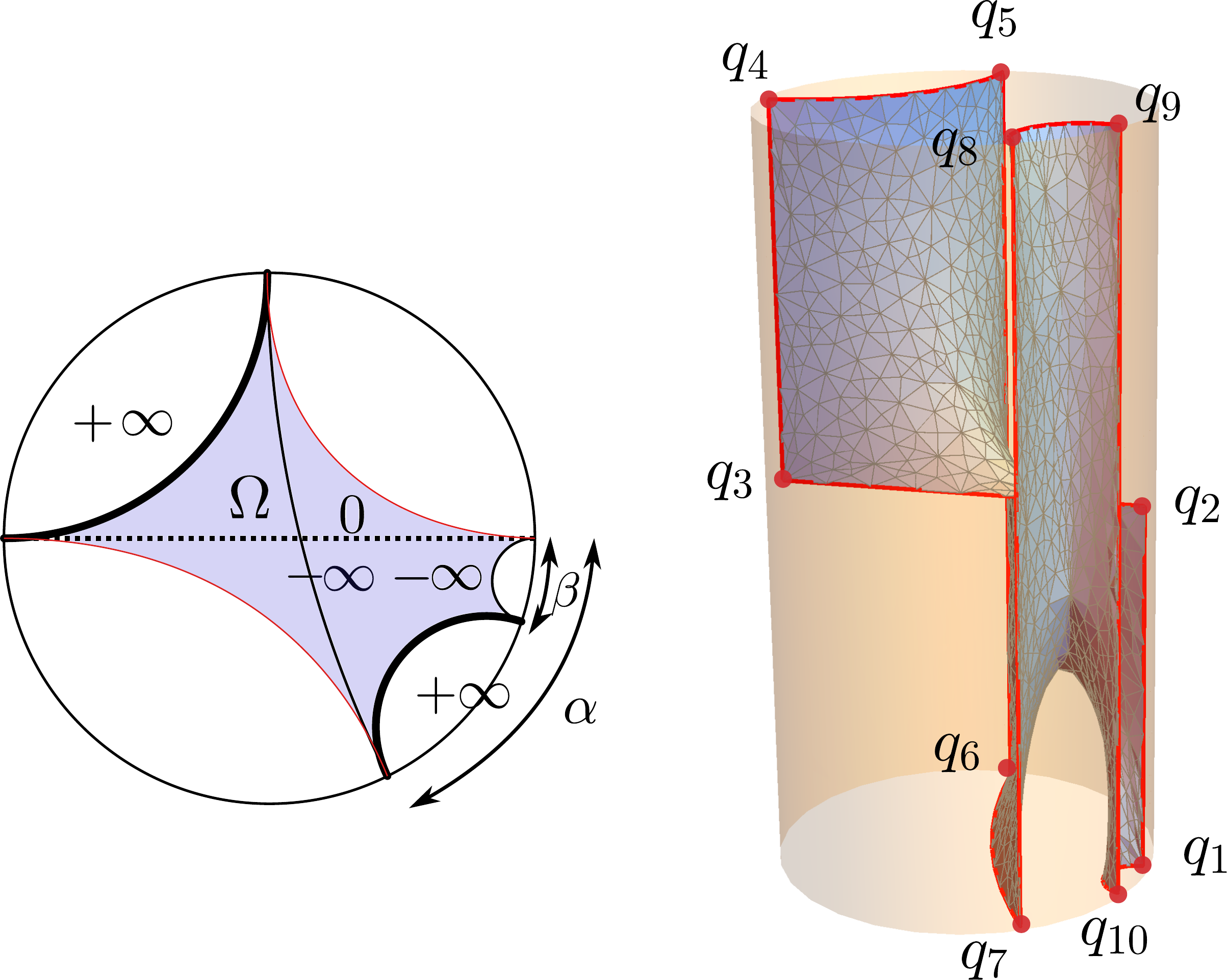}
	\end{center}
	\caption{Left: The projection of the admissible polygon at infinity.  Right: The admissible polygon at infinity $\mathcal P_{\alpha,\beta}$ (in red) and the area-minimizing surface $\Sigma_{\alpha,\beta}$ spanned by it.}
	\label{Fig-Poligono-2}
\end{figure}

Denote by $\overline{q_iq_j}$ the (horizontal or vertical) geodesic joining $q_i$ and $q_j$. We consider the polygon $\mathcal P_{\alpha,\beta}$ formed by the following horizontal and vertical geodesic segments:
\begin{align*}
     &v_1=\overline{q_1q_2},\ h_1=\overline{q_2q_3}\subset\h^2\times\{0\}\\
     &v_2=\overline{q_3q_4},\ \bar l_1=\overline{q_4q_5}\subset\h^2\times\{+\infty\},\\
     &v_3=\overline{q_5q_6},\ \bar l_2=\overline{q_6q_7}\subset\h^2\times\{-\infty\},\\
     &v_4=\overline{q_7q_8},\ \bar l_3=\overline{q_8q_9}\subset\h^2\times\{+\infty\},\\
     &v_5=\overline{q_9q_{10}},\ \bar l_4=\overline{q_{10}q_1}\subset\h^2\times\{-\infty\}.
\end{align*}

Let $\Omega_1\subset\mathbb H^2$ be the domain bounded by the geodesics $\pi(\bar l_1)$ and $\pi(\bar l_3)$ together with the two arcs $C_1,C_2\subset\partial_\infty\mathbb H^2$ joining their endpoints. Let $\Omega_2\subset\mathbb H^2$ be the domain bounded by the geodesics $\pi(\bar l_2)$ and $\pi(\bar l_4)$ together with the two arcs $C_3,C_4\subset\partial_\infty\mathbb H^2$ joining their endpoints. Finally, let $\Omega_3\subset\mathbb H^2$ be the domain bounded by $\pi(h_3)$ and the arc $C_5\subset\partial_\infty\mathbb H^2\setminus\pi(h_3)$ containing the point $-1\in\partial_\infty\mathbb H^2$.

Let's consider the  following Jenkins--Serrin problems (see~\cite{MRR}):
\[
(I)\;
\begin{cases}
\operatorname{div}\!\left(\dfrac{\nabla u}{\sqrt{1+|\nabla u|^2}}\right)=0,& u\in\Omega_1,\\[0.2cm]
u=+\infty,  &\hspace{-1.4cm}\text{on }\pi(\bar{l_1})\cup\pi(\bar{l_3}),\\
u=0,& \hspace{-1.4cm}   \text{on }C_1\cup C_2,
\end{cases}
\qquad
(II)\;
\begin{cases}
\operatorname{div}\!\left(\dfrac{\nabla u}{\sqrt{1+|\nabla u|^2}}\right)=0,& u\in\Omega_2,\\[0.2cm]
u=-\infty,& \hspace{-1.4cm} \text{on }\pi(\bar{l_2})\cup\pi(\bar{l_4}),\\
u=0,& \hspace{-1.4cm} \text{on }C_3\cup C_4.
\end{cases}
\]

It is easy to verify that for this choice of points $q_i$, the Jenkins-Serrin conditions (see~\cite{CR}) are satisfied for $\Omega_1$. (In this case, these conditions are equivalent to the fact that $\Omega_1$ contains an ideal regular quadrilateral in its interior.) Hence the problem $(I)$  admits a solution for every $0<\beta<\alpha\leq\frac{\pi}{2}$. 

On the other hand, for each $\alpha\in(0,\frac{\pi}{2}]$ there exists a unique value $\beta_0(\alpha)$ such that the ideal quadrilateral with vertices $i$, $e^{-i\alpha}$, $e^{-i\beta_0(\alpha)}$ and $1$ is regular. A straightforward computation, analogous to that in Lemma~\ref{claim-parametros}, shows that
\[
\cos\bigl(\beta_0(\alpha)\bigr)=
\frac{2\bigl(\sin\alpha+\cos\alpha+1\bigr)}
{2\sin\alpha+\cos\alpha+3}.
\]
Therefore, the Jenkins--Serrin conditions~\cite{CR} for the problem $(II)$ are satisfied whenever the regular quadrilateral is contained in $\Omega_2$, or equivalently
$0<\alpha\leq\frac{\pi}{2}$ and $0<\beta<\beta_0(\alpha)$.

Let $S^+$ denote the solution to problem $(I)$, and let
$S^-=S_1^-\cup S_2^-$, where $S_1^-$ is the solution to problem $(II)$ and $S_2^-$ is the hyperbolic-invariant minimal graph defined on $\Omega_3$ taking values $-\infty$ on $\pi(\bar{l_2})$ and $-1$ on $C_5$.

As in Proposition~\ref{prop: Ejemplo 1}, let $\mathcal P_{\alpha,\beta}^{r,h}$ denote the image of $\mathcal P_{\alpha,\beta}$ under the natural identification of $\overline{\mathbb H^2\times\mathbb R}$ with $B(r)\times[-h,h]$. Since the cylinder $C(r,h)$ is mean convex, Meeks--Yau's Theorem~\cite{M-Y} yields an embedded area-minimizing surface $\Sigma_{\alpha,\beta}^{r,h}$ with boundary $\mathcal P_{\alpha,\beta}^{r,h}$ for every $r,h>0$.

Let $r_n,h_n\to\infty$. Arguing as in Proposition~\ref{prop: Ejemplo 1}, the barriers $S^+$ and $S^-$, together with the maximum principle, imply that the sequence $\Sigma_{\alpha,\beta}^{r_n,h_n}$ admits a subsequence converging smoothly on compact subsets, in the $C^k$ topology for every $k\ge0$, to a connected embedded area-minimizing surface $\Sigma_{\alpha,\beta}$ with boundary $\mathcal P_{\alpha,\beta}$ (see Figure~\ref{Fig-barreras-2}).

Finally, reflecting $\Sigma_{\alpha,\beta}$ across the horizontal geodesic $h_1$ by Schwarz reflection, we obtain a complete proper minimal surface $\bar\Sigma_{\alpha,\beta}$ asymptotic to an embedded admissible polygon at infinity of type \emph{(2)}, and to the limit configuration \emph{(b)} in Theorem~\ref{th:configuraciones-2} when $\alpha=\frac{\pi}{2}$. Observe that global embeddedness may fail after the Schwarz reflection. In particular, Theorems~\ref{th:caracterizacion-ctf} and~\ref{th:ctf} imply that $\bar\Sigma_{\alpha,\beta}$ has total curvature $-6\pi$.
\begin{figure}[htb]
	\begin{center}
		\includegraphics[height=4cm]{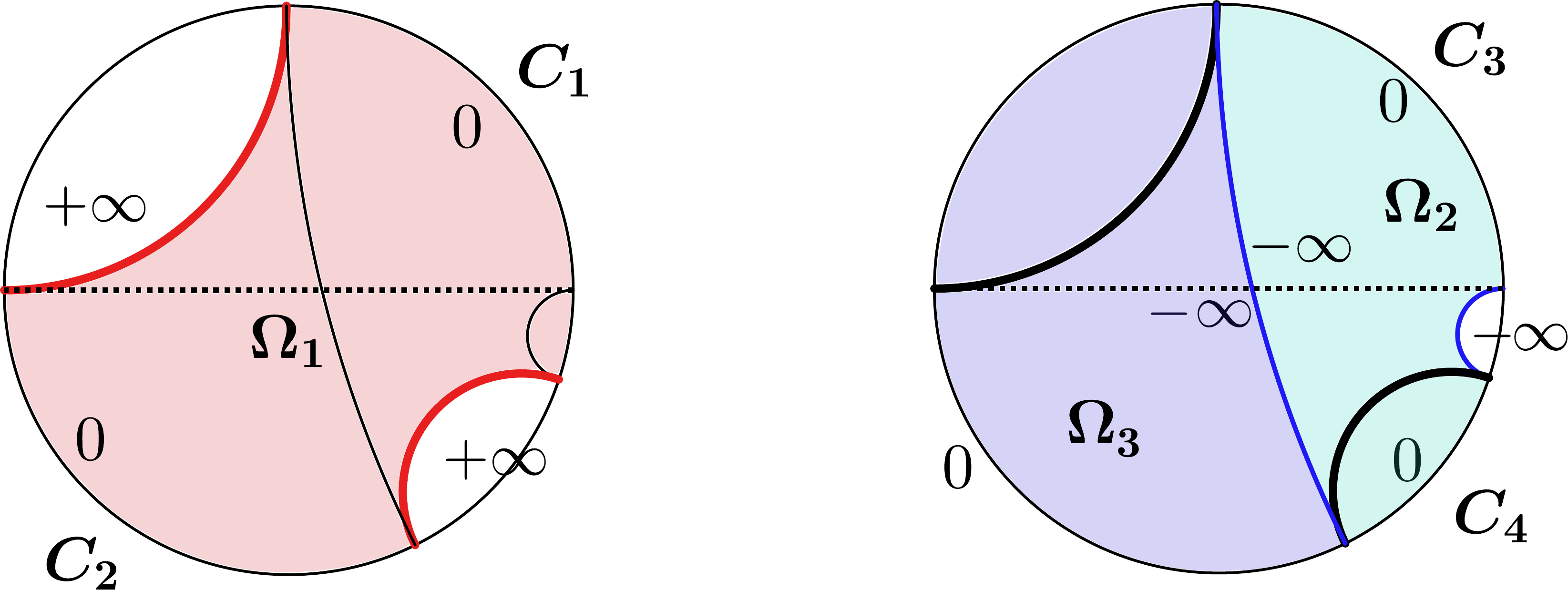}
	\end{center}
	\caption{The projections of the upper (red) and lower (blue and purple) barriers used in Proposition~\ref{prop: Ejemplo 2}}
	\label{Fig-barreras-2}
\end{figure}
\end{proof}
\begin{remark}
    It is expected that minimal surfaces of type \emph{(4)} in~\ref{th:configuraciones-2} with one vertical plane of symmetry  could be found as the conjugate sister of the surfaces $\bar\Sigma_{\alpha,\beta}$ of Proposition~\ref{prop: Ejemplo 2}. Note that the total curvature must be preserved by conjugation, and horizontal geodesics are sent to symmetry curves contained in vertical planes where the conjugate sister surface meets orthogonally.
\end{remark}


\begin{thebibliography}{}
	
	\bibitem{CM} J.\ Castro-Infantes, J.\ M.\ Manzano.
	\newblock Genus one minimal $k$-noids and saddle towers in $\mathbb{H}^2\times\mathbb{R}$.
	\newblock \emph{J.\ Inst.\ Math.\ Jussieu}, \textbf{22} (2023), no. 5, 2155–2175.

   
	
	\bibitem{CMR}
	J. {Castro-Infantes}, J. M. Manzano, M. Rodr{\'i}guez.
	\newblock A construction of constant mean curvature surfaces in
	{$\mathbb{H}^2\times\mathbb{R}$} and the {{Krust}} property.
	\newblock \emph{Int. Math. Res. Not. }, 2022, no. 19, 14605–14638.
	
		\bibitem{CS}
	J. {Castro-Infantes}, J. S. Santiago.
	\newblock Genus one $H$-surfaces with $k$-ends in $\mathbb H^2\times\R$
	\newblock Rev. Mat. Iberoam. \textbf{41} (2025), no. 1, 365–400.
	
	
	
	\bibitem{CR}
	P. Collin and H. Rosenberg,
	{\em Construction of harmonic diffeomorphisms and minimal graphs},
	Ann. of Math., {\bf 172} (2010),
	1879--1906. 
\bibitem{Coskunuzer} B. Coskunuzer,
     {\em Asymptotic plateau problem in  $\mathbb H^2\times\mathbb R$}, Selecta Math. (N.S.) {\bf24} (2018), no. 5, 4811–4838.
	
	
	
	
	\bibitem{HMR} 
	L. Hauswirth, A. Menezes and M.M. Rodr\'{\i}guez, 
	\newblock {\em On the characterization of minimal surfaces with finite total
		curvature in $\h^2\times\r$ and $\widetilde{\rm PSL}_2 (\r)$},
	\newblock Calc. Var. \textbf{58} (2019), no. 2, Paper No. 80, 24 pp.
   
	
	\bibitem{hnst}
	L. Hauswirth, B. Nelli, R. Sa Earp and  E. Toubiana,
	\newblock {\em Minimal ends in $\h^2\times\r$ with finite total
		curvature and a Schoen type theorem}, Advances in Mathematics, {\bf 274} (2015), 199--240.
	
	\bibitem{hr}
	L. Hauswirth and H. Rosenberg,
	\newblock {\em Minimal surfaces of finite total curvature in $\mathbb H\times\mathbb
		R$},
	\newblock Mat. Contemp., {\bf 31} (2006), 65--80  .
	
		
	\bibitem{hst}
	L. Hauswirth, R. Sa Earp and E. Toubiana,
	{\em Associate and conjugate minimal immersions in $M\times\bold
		R$}, Tohoku Math. J. (2) {\bf 60} (2008), 267-286.
	
	
	\bibitem{hu} 
	A. Huber,
	\newblock {\em On Subharmonic Functions and Differential Geometry in
		the Large},
	\newblock Comment. Math. Helvetic, {\bf 32} (1957), 181--206.

\bibitem{JM} 
	L. P. Jorge and W. H.  Meeks III
	\newblock {\em The Topology of Complete Minimal Surfaces of Finite Total Gaussian Curvature},
	\newblock Topology, {\bf 22} (1983), 203--221.Jorge

    
	\bibitem{MMR} 
	F. Mart\'\i n, R. Mazzeo and M.M. Rodr\'{\i}guez, 
	\newblock {\em Minimal surfaces with positive genus and finite total
		curvature in $\h^2\times\r$}, 
	\newblock Geometry and Topology, {\bf 18} (2014), 141--177. 
	
	\bibitem{MRR} L.\ Mazet, M. Rodr\'{i}guez, H.\ Rosenberg.
	\newblock The Dirichlet problem for the minimal surface equation --with possible infinite boundary data-- over domains in a Riemannian surface.
	\newblock \emph{Proc.\  London Math.\ Soc.\ (3)},  \textbf{102} (2011), no. 6, 985--1023.

    \bibitem{M-Y} W. H. Meeks III, S. T. Yau.
\newblock The existence of embedded minimal surfaces and the problem of uniqueness. \newblock \emph{Math. Z.} {\bf 179} (1982), no. 2, 151--168.

\bibitem{MPR} W. H. Meeks III, J. Pérez, A. Ros. 
\newblock Bounds on the topology and index of minimal surfaces. \newblock \emph{Acta Math.} {\bf 223 } (2019), no. 1, 113–149.


    
	\bibitem{MoR}
	F.~Morabito and M.M.~Rodr\'\i guez, {\it Saddle towers and minimal $k$-noids in
		$\mathbb{H}^2\times\mathbb{R}$},
	\newblock {J. Inst. Math. Jussieu}, {\bf 11} (2012), 333--349.

    \bibitem{NR} B.\ Nelli, H.\ Rosenberg.
	\newblock Minimal surfaces in $\mathbb{H}^2\times\mathbb{R}$.
	\newblock\emph{Bull.\ Braz.\ Math.\ Soc.}, \textbf{33} (2002), no. 2, 263-292.	
	
	\bibitem{oss} R. Osserman,
	{\it Global properties of minimal surfaces in $E^{3}$ and $E^{n}$"},
	Ann. Math. (2), {\bf 80} (1964), 340--364.
	
	
	
	\bibitem{p} J.~Pyo, {\it New complete embedded minimal surfaces in
		$\mathbb{H}^2\times\mathbb{R}$}, \newblock {Ann. Glob. Anal. Geom.},
	{\bf 40} (2011), 167--176.
	
	
	\bibitem{PR}
	J.~Pyo and M.M.~Rodr\'\i guez, {\it
		\newblock Simply-connected minimal surfaces with finite total curvature in
		$\mathbb{H}^2\times\mathbb{R}$}, Int. Math. Res. Notices, {\bf 2014} (2014), 2944-2954. 
	
	
	
	
	\bibitem{sch}  R. Schoen, {\em Uniqueness, symmetry, and embeddedness of minimal surfaces}, J. Diff.
	Geom., {\bf 18} (1983), 791--809.
	
\end{thebibliography}
\end{document}